\documentclass[a4paper,reqno,11pt]{amsart}
\usepackage{amsmath}
\usepackage{amssymb}
\usepackage{amsthm}
\usepackage{latexsym}
\usepackage[all]{xy}
\usepackage{amsfonts}
\usepackage{mathrsfs}
\usepackage{graphicx,color}
\usepackage{xcolor}
\usepackage{tikz}
\usepackage{graphics}


\usepackage{anysize,hyperref}




\newcommand{\cal}{\mathcal}

\newtheorem{theorem}{Theorem}[section]
\newtheorem{lemma}[theorem]{Lemma}
\newtheorem{corollary}[theorem]{Corollary}
\newtheorem{proposition}[theorem]{Proposition}
\theoremstyle{definition}
\newtheorem{definition}[theorem]{Definition}
\newtheorem{definition-proposition}[theorem]{Definition-Proposition}
\newtheorem{remark}[theorem]{Remark}
\newtheorem{example}[theorem]{Example}

\def\C{\mathcal{C}}
\def\D{\mathcal{D}}
\def\T{\mathcal{T}}

\def\X{\mathscr{X}}
\def\Y{\mathscr{Y}}

\def\I{\mathcal {I}}

\def \text{\mbox}

\providecommand{\add}{\mathop{\rm add}\nolimits}%
\providecommand{\Ext}{\mathop{\rm Ext}\nolimits}%
\providecommand{\Hom}{\mathop{\rm Hom}\nolimits}%
\renewcommand{\mod}{\mathop{\rm mod}\nolimits}%
\providecommand{\rep}{\mathop{\rm rep}\nolimits}%

\def\k{\mathbf{k}}

\def\pp{\mathfrak{p}}
\def\qq{\mathfrak{q}}
\def\qqq{\mathfrak{qq}}
\def\ii{\mathfrak{i}}
\def\aa{\mathfrak{a}}
\def\jj{\mathfrak{j}}
\def\ll{\mathfrak{l}}
\def\rr{\mathfrak{r}}
\def\TT{\mathbb{P}}
\def\bTT{\overline{\TT}}

\def\BB{\mathcal{B}_{\infty}}
\def\XX{\widetilde{\X}}
\def\YY{\widetilde{\Y}}
\def\CC{\mathscr{C}}

\providecommand{\cona}{\mathop{\rm (Pt)}\nolimits}%
\providecommand{\conb}{\mathop{\rm (Pt2)}\nolimits}%
\providecommand{\conbp}{\mathop{\rm (Pt2')}\nolimits}%
\providecommand{\conc}{\mathop{\rm (Pt3)}\nolimits}%
\providecommand{\nc}{\mathop{\rm nc}\nolimits}%

\hyphenation{ap-pro-xi-ma-tion}

\begin{document}

\title{Cotorsion pairs in cluster categories of type $A_{\infty}^{\infty}$}

\author[Chang]{Huimin Chang}
\address{
Department of Mathematical Sciences
Tsinghua University
100084 Beijing
China
}
\email{chm14@mails.tsinghua.edu.cn}

\author[Zhou]{Yu Zhou}
\address{
Department of Mathematical Sciences
Norwegian University of Science and Technology
7491 Trondheim
Norway
}
\email{yu.zhou@ntnu.no}

\author[Zhu]{Bin Zhu}
\address{
Department of Mathematical Sciences
Tsinghua University
100084 Beijing
China
}
\email{bzhu@math.tsinghua.edu.cn}

\begin{abstract}

In this paper, we give a complete classification of cotorsion pairs in a cluster category $\CC$ of type $A^\infty_\infty$ via certain configurations of arcs, called $\tau$-compact Ptolemy diagrams, in an infinite strip with marked points. As applications, we classify $t$-structures and functorially finite rigid subcategories in $\CC$, respectively. We also deduce Liu-Paquette's classification of cluster tilting categories of $\CC$ and Ng's classification of torsion pairs in the cluster category of type $A_\infty$.

\end{abstract}

\subjclass[2010]{05E10; 18E30; 13F60}

\keywords{cluster category of type $A^\infty_\infty$; (co)torsion pair; $\tau$-compact Ptolemy diagram; $t$-structure; cluster tilting subcategories}

\thanks{This work was supported by the NSF of China (Grants No.\;11671221). The second author was partially supported by FRINAT grant number 231000, from the Norwegian Research Council. }

\maketitle

\section{Introduction}


Torsion theory is a fundamental and central topic in the representation theory of algebras. Torsion pairs for abelian categories, introduced by Dickson \cite{D}, are intimately related to tilting theory. The ideal of torsion theory for a triangulated category was introduced by Iyama and Yoshino \cite{IY} to study cluster tilting subcategories in a triangulated category.

Cluster categories, constructed by Buan, Marsh, Reineke, Reiten and Todorov \cite{BMRRT} (also by Caldero, Chapoton and Schiffler \cite{CCS} for type $A_n$), give a categorical model for Fomin and Zelevinsky's cluster algebras. The cluster tilting subcategories of the cluster category correspond to the clusters of the cluster algebra and their mutations are compatible. Further, the torsion pairs in the cluster category correspond to certain pairs of cluster subalgebras of the cluster algebra (cf. \cite{CZ2,CZZ}).

Cotorsion pairs in a triangulated category were used by Nakaoka \cite{N} to unify the abelian structures arising from $t$-structures and from cluster tilting subcategories. Torsion pairs and cotorsion pairs in a triangulated category can be transformed into each other by shifting the torsion-free parts. Hence to classify torsion pairs is equivalent to classifying cotorsion pairs in a triangulated category. Note that this is not true for abelian categories.

Torsion/cotorsion pairs have been classified for many cluster categories (or more generally, 2-Calabi-Yau categories with maximal rigid subcategories):
\begin{enumerate}
	\item Ng \cite{Ng} classified torsion pairs in the cluster category of type $A_\infty$ (introduced in \cite{HJ2}) via certain configurations of arcs of the infinity-gon.
	\item Holm, J{\o}rgensen and Rubey \cite{HJR1,HJR2,HJR3} classified torsion pairs in the cluster category of type $A_{n}$, in the cluster tube and in the cluster category of type $D_n$ via Ptolemy diagrams of a regular $(n+3)$-gon, periodic Ptolemy diagrams of the infinity-gon and Ptolemy diagrams of a regular $2n$-gon, respectively.
	\item Zhang, Zhou and Zhu \cite{ZZZ} classified cotorsion pairs in the cluster category of an unpunctured marked surface via paintings of the surface.
	\item Zhou and Zhu \cite{ZZ2} classified torsion pairs in an arbitrary $2$-Calabi-Yau triangulated category with cluster tilting objects via decompositions of the triangulated category w.r.t. rigid objects.
	\item Chang and Zhu \cite{CZ} classified torsion pairs in finite $2$-Calabi-Yau triangulated categories with maximal rigid objects via periodic Ptolemy diagrams of a regular polygon.
\end{enumerate}

Notice that the works above only deal with 2-Calabi-Yau categories having cluster tilting subcategories or maximal rigid subcategories, which contain finitely many indecomposable objects except Ng's work. Recently, Liu and Paquette \cite{LP} introduced another 2-Calabi-Yau category, the cluster category $\CC$ of type $A_\infty^\infty$, which admits cluster categories having infinitely many indecomposable objects. They gave a geometric realization of $\CC$, via an infinite strip with marked points $\BB$ in the plane. Parameterizing the indecomposable objects in $\CC$ by the arcs in $\BB$, they showed that there is a bijection between the cluster tilting subcategories of $\CC$ and the compact triangulations of $\BB$.

In this paper, we introduce the definition of $\tau$-compact Ptolemy diagrams of $\BB$, which can be regarded as a generalization of compact triangulations of $\BB$. We show that there is a bijection between the cotorsion pairs in $\CC$ and the $\tau$-compact Ptolemy diagrams of $\BB$. A criterion for a Ptolemy diagram to be $\tau$-compact is also given. As applications, we get geometric descriptions of $t$-structures and functorially finite rigid subcategories in $\CC$, respectively. We also deduce Liu-Paquette's classification of cluster tilting categories of $\CC$ and Ng's classification of torsion pairs in the cluster category of type $A_\infty$.

The paper is organized as follows. In Section 2, we review background materials concerning cotorsion pairs in a triangulated category and cluster categories of type $A_\infty^\infty$. In Section 3, we introduce the notion of $\tau$-compact Ptolemy diagrams of $\BB$ and give a criterion for a Ptolemy diagram to be $\tau$-compact. Section 4 is devoted to proving the main result (Theorem~\ref{maintheorem}) of this paper. Many applications are given in the last section.

\subsection*{Conventions} Throughout this paper, $\k$ stands for an algebraically closed field, and all categories are assumed to be Hom-finite, Krull-Schmidt and $\k$-linear. Any subcategory of a category is assumed to be full and closed under taking isomorphisms, finite direct sums and direct summands. For a subcategory $\X$ of a category $\D$, we denote by $\X^\perp$ (resp. $^\perp\X$) the subcategory whose objects are $M\in\D$ satisfying $\Hom_{\D}(X,M)=0$ (resp. $\Hom_{\D}(M,X)=0$) for any $X\in \X$. For two subcategories $\X,\Y$ of $\D$, $\Hom(\X,\Y)=0$ means $\Hom_{\D}(X,Y)=0$ for any $X\in\X$ and any $Y\in\Y$. For two subcategories $\X,\Y$ of a triangulated category $\D$, denote by $\X\ast\Y$ the subcategory of $\D$ whose objects are $M$ which fits into a triangle
\[X\rightarrow M\rightarrow Y\rightarrow X[1]\]
with $X\in\X$ and $Y\in\Y$. In a triangulated category, we use $\Ext^1(X,Y)$ to denote $\Hom(X,Y[1])$, where $[1]$ is the shift functor of the triangulated category.

\section{Preliminaries}

\subsection{Cotorsion pairs in triangulated categories}

We recall some (equivalent) definitions and results concerning cotorsion pairs in a triangulated category.

\begin{definition}\label{def:torsion} Let $\X,\Y$ be subcategories of a triangulated category $\D$.
\begin{enumerate}
\item The pair
$(\X,\Y)$ is called a torsion pair \cite{IY} if
\[\Hom_{\D}(\X,\Y)=0,\ \text{ and }\ \D=\X\ast\Y.\]
\item The pair $(\X,\Y)$ is called a cotorsion pair \cite{N} if
\[\Ext^1_{\D}(\X,\Y)=0,\ \text{ and }\ \D=\X\ast\Y[1].\]
The subcategory $\I:=\X\bigcap \Y$ is called the core \cite{ZZ3} of $(\X,\Y)$.
\item The pair $(\X,\Y)$ is called a t-structure \cite{BBD} if and only if it is a cotorsion pair and $\X$ is closed under shift (or equivalently, $\Y$ is closed under [-1]). The subcategory $\X[-1]\cap\Y[1]$ is called the heart of $(\X,\Y)$.
\item The subcategory $\X$ is called rigid if $\Ext^1(\X,\X)=0$.
\item The subcategory $\X$ is called a cluster tilting subcategory \cite{BMRRT,IY,KR1,KZ} if it satisfies the following:
\begin{enumerate}
\item $\X$ is contravariantly finite \cite{AS}, i.e. for any $M\in\D$, there is a morphism $X\rightarrow M$ such that any morphism $X'\rightarrow M$ with $X'\in\X$ factors through it.
\item $\X$ is covariantly finite \cite{AS}, i.e. for any  $M\in\D$, there is a morphism $M\rightarrow X$ such that any morphism $M\rightarrow X'$ with $X'\in\X$ factors through it.
\item $X\in\X$ if and only if $\Ext^1(X,X')=0$ for any $X'\in\X$ if and only if $\Ext^1(X',X)=0$ for any $X'\in\X$.
\end{enumerate}
\item The subcategory $\X$ is called functorially finite if it is contravariantly finite and covariantly finite.
\end{enumerate}

\end{definition}

\begin{proposition}[\cite{IY,ZZ3}]\label{prop:def}
Let $\X,\Y$ be subcategories of a triangulated category $\D$.
\begin{enumerate}
\item The pair
$(\X,\Y)$ is a torsion pair if and only if the following hold.
\begin{enumerate}
	\item $\X^\bot=\Y$;
	\item $^{\bot}\Y=\X$;
	\item $\X$ is contravariantly finite or $\Y$ is covariantly finite.
\end{enumerate}
\item The pair $(\X,\Y)$ is a cotorsion pair if and only if $(\X, \Y [1])$ is a torsion pair.
\item The pair $(\X, \Y)$ is a t-structure if and only if it is a cotorsion pair whose core is 0.
\item The subcategory $\X$ is functorially finite rigid if $(\X,\X^\bot)$ and $(^\bot\X,\X)$ are torsion pairs.
\item The subcategory $\X$ is cluster tilting if and only if $(\X,\X)$ is a cotorsion pair.
\end{enumerate}

\end{proposition}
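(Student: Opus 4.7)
The plan is to establish each of the five equivalences by combining the definitions with elementary triangle manipulations; most reduce to standard approximation arguments.

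For (1), forward: if $(\X,\Y)$ is a torsion pair, $\Hom(\X,\Y)=0$ immediately gives $\Y\subseteq\X^\bot$ and $\X\subseteq{}^\bot\Y$; for the reverse inclusions, given $M\in\X^\bot$, a triangle $X\to M\to Y\to X[1]$ from $\D=\X\ast\Y$ has zero first map (since $\Hom(X,M)=0$), hence splits to $Y\cong M\oplus X[1]$ and places $M\in\Y$ by summand closure, and dually ${}^\bot\Y\subseteq\X$. The same triangle shows $X\to M$ to be a right $\X$-approximation, so $\X$ is contravariantly finite. Conversely, from (a)--(c) with $\X$ contravariantly finite, one takes a right $\X$-approximation $X\to M$, completes to a triangle $X\to M\to Y\to X[1]$, and checks that its cone lies in $\X^\bot=\Y$ by using $\X={}^\bot(\X^\bot)$; this is the Iyama--Yoshino characterization.

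Part (2) is pure definition-chase: $\Ext^1(\X,\Y)=\Hom(\X,\Y[1])$ and $\D=\X\ast\Y[1]$ are simultaneously the cotorsion axioms for $(\X,\Y)$ and the torsion axioms for $(\X,\Y[1])$. For (3), the forward direction is short: if $\X$ is closed under $[1]$ and $M\in\X\cap\Y$, then $M[1]\in\X$ yields $\Ext^1(M[1],M)=\Hom(M[1],M[1])=0$, so $M=0$. For the converse, from cotorsion with vanishing core I would apply the cotorsion triangle for $X[1]$ (with $X\in\X$) and argue that its $\Y[1]$-component is itself forced into $\X\cap\Y=0$ by transferring information through the connecting maps, thereby placing $X[1]$ in $\X$.

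For (4), each torsion pair supplies one half of the functorial finiteness of $\X$ via (1) and its dual. Rigidity is then extracted by combining the $(\X,\X^\bot)$-approximation of $X[1]$ (for $X\in\X$) with the $({}^\bot\X,\X)$-approximation of $X[-1]$: the identities $\X={}^\bot(\X^\bot)=({}^\bot\X)^\bot$ from (1) force the extension class in $\Ext^1(\X,\X)$ to vanish. For (5), assuming $\X$ is cluster tilting, a right $\X$-approximation $X\to M$ produces a triangle $X\to M\to Z\to X[1]$ with $Z\in\X^\bot$ (by surjectivity of the approximation and $\Ext^1(\X,\X)=0$), and the cluster tilting axiom then places $Z[-1]$ in $\X$, giving $\D=\X\ast\X[1]$. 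Conversely, if $(\X,\X)$ is cotorsion, functorial finiteness follows from the cotorsion triangles and their shifts, and the maximality axiom follows by the identity-lifting trick: given $\Ext^1(X,\X)=0$, the triangle $X_1\to X\to X_2[1]\to X_1[1]$ satisfies $\Hom(X,X_2[1])=0$, so $\mathrm{id}_X$ lifts through $X_1\in\X$, exhibiting $X$ as a summand of $X_1$.

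The main obstacles will be the converse of (3), where core vanishing must be converted into shift closure through a delicate diagram chase in the cotorsion triangle of $X[1]$, and the rigidity step in (4), which requires combining two perpendicularity identities produced by two different torsion pairs. The remaining implications reduce to direct verifications.
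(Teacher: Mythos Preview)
The paper does not prove this proposition at all; it is quoted from \cite{IY,ZZ3} as background. So there is no ``paper's own proof'' to compare against, and your task was really to reconstruct the arguments from those references.

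Your outlines for parts (1), (2), (3) and (5) are essentially the standard ones. Two small comments: in the converse of (1), the fact that the cone of a right $\X$-approximation lands in $\X^\bot$ is not immediate from surjectivity on $\Hom$ alone --- it is a Wakamatsu-type argument that uses that $\X={}^\bot\Y$ is closed under extensions, and you should say this explicitly rather than absorbing it into ``this is the Iyama--Yoshino characterization''. In the converse of (3), the key unstated step is again that $\X$ is extension-closed (because $\X={}^\bot(\Y[1])$): rotating the cotorsion triangle $X'\to X[1]\to Y'[1]\to X'[1]$ twice gives $X\to Y'\to X'\to X[1]$, so $Y'\in\X$ by extension-closure, hence $Y'\in\X\cap\Y=0$ and $X[1]\cong X'\in\X$. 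Your phrase ``transferring information through the connecting maps'' hides exactly this point.

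Part (4) is where there is a genuine gap, and it is not repairable along the lines you sketch. The identities $\X={}^\bot(\X^\bot)=({}^\bot\X)^\bot$ that you invoke say nothing about $\Hom(\X,\X[1])$, and indeed rigidity does \emph{not} follow from the hypotheses as literally stated: take $\X=\D$, so that $(\D,0)$ and $(0,\D)$ are torsion pairs and $\D$ is functorially finite, yet $\Ext^1(M[1],M)\cong\End(M)\neq 0$ for any nonzero $M$, so $\D$ is not rigid. Thus the assertion in (4) is at best imprecisely transcribed from \cite{ZZ3}; the functorial finiteness half of your argument is correct, but you should consult the original references for the intended statement rather than trying to force a rigidity proof that cannot exist.
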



\begin{definition}
A triangulated category $\D$ is called 2-Calabi-Yau (shortly 2-CY) provided there is a functorially isomorphism
\[\Hom_{\D}(X,Y)\simeq D\Hom_{\D}(Y, X[2]),\]
for any $X, Y\in \D$, where $D=\Hom_{\k}(-,\k)$.
\end{definition}

\subsection{Geometric description of cluster category of type \texorpdfstring{$A_{\infty}^{\infty}$}{A double infinity}}

In this subsection, we recall from \cite{LP} a geometric description of a cluster category of type $A^\infty_\infty$.

Let $Q$ be a quiver of type $A_{\infty}^{\infty}$ without infinite path, and $\rep(Q)$ the category of finite dimensional $\k$-linear representations of $Q$. Let $D^{b}(\rep(Q))$ be the bounded derived category of $\rep(Q)$ with shift functor [1] and the Auslander-Reiten translation $\tau$. The cluster category $\CC$ is defined to be the orbit category
\[\CC=\CC(Q)=D^{b}(\mathrm{rep}(Q))/\tau^{-1}[1].\]
By \cite{K}, $\CC$ is a Hom-finite Krull-Schmidt 2-Calabi-Yau triangulated $\k$-category.

Following \cite{LP}, denote by $\BB$ the infinite strip in the plane of the marked points $(x,y)$ with $0\leq y\leq 1$. The points $\mathfrak{l}_{i}=(i,1)$, $i\in\mathbb{Z}$, are called {\em  upper marked points}, and the points $\mathfrak{r}_{i}=(-i,0)$, $i\in\mathbb{Z}$, are called {\em lower marked points}. An upper or lower marked point will be simply called a {\em marked point}. For any two distinct marked points $\mathfrak{p}$ and $\mathfrak{q}$ in $\BB$, there exists a unique (up to isotopy) simple curve in $\BB$ joining them, which is written as $[\mathfrak{p}, \mathfrak{q}]$ or $[\mathfrak{q}, \mathfrak{p}]$. A simple curve $[\mathfrak{p}, \mathfrak{q}]$ in $\BB$ is called an {\em edge} if $\{\mathfrak{p}, \mathfrak{q}\}=\{\mathfrak{l}_{i},\mathfrak{l}_{i+1}\}$ or $\{\mathfrak{p}, \mathfrak{q}\}=\{\mathfrak{r}_{i},\mathfrak{r}_{i+1}\}$ for some $i\in\mathbb{Z}$, and otherwise, an {\em arc}. An arc in $\BB$ joining two upper marked points is called an {\em upper arc}; an arc in $\BB$ joining two lower marked points is called a {\em lower arc}; and an arc joining one upper marked point and one lower marked point is called a {\em connecting arc}. See Figure~\ref{fig:a}.

\begin{figure}[ht]\centering
\begin{tikzpicture}[scale=.6]
\draw[very thick] (-7,0)--(8,0);
\draw[very thick] (-7,5)--(8,5);
\foreach \x in {-6,-5,-4,...,7}
\draw (\x,0)node{$\bullet$} (\x,5)node{$\bullet$}
(\x,5)node[above]{$\ll_{\x}$};
\foreach \x in {-7,-6,-5,...,6}
\draw (-\x,0)node[below]{$\rr_{\x}$};
\draw(-3,5).. controls +(-60:1) and +(-120:1)..(-1,5);
\draw(2,5).. controls +(-140:2) and +(80:1)..(-1,0);
\draw(-1,0).. controls +(45:3) and +(135:3)..(5,0);
\end{tikzpicture}
\caption{Marked points and arcs in $\cal B_{\infty}$}
\label{fig:a}
\end{figure}
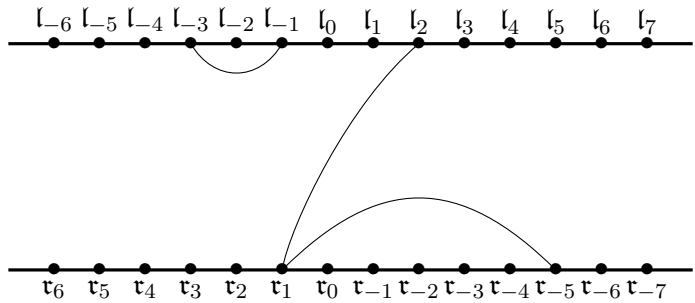

There is a translation $\tau$ on the set of arcs in $\BB$ given by
\[\tau [\pp,\qq]=[\tau\pp, \tau\qq]\]
where the translation $\tau$ acting on a marked point is given by $\tau\ll_i=\ll_{i+1}$ and $\tau\rr_i=\rr_{i+1}$ for any $i\in\mathbb{Z}$.

Let $u,v$ be arcs in $\cal B_{\infty}$. One says that u {\em crosses} v, or $(u,v)$ is a {\em crossing pair}, if every curve isotopic to $u$ crosses each of the curves isotopic to $v$. By definition, an arc does not cross itself, two crossing arcs do not share a common endpoint, and an upper arc does not cross any lower arc. The following lemma from \cite{LP} gives an explicit criterion for two arcs in $\BB$ to form a crossing pair, which will be frequently used without a reference.

\begin{lemma}[Lemma 4.2 in \cite{LP}]
Let $(u,v)$ be a crossing pair of arcs in $\BB$.
\begin{itemize}
   \item [(1)] If $u=[\mathfrak{l}_{i}, \mathfrak{l}_{j}]$ with $i<j$, then $v=[\mathfrak{l}_{p}, \mathfrak{r}_{q}]$ with $i<p<j$; or $v=[\mathfrak{l}_{p}, \mathfrak{l}_{q}]$ with $i<p<j<q$ or $p<i<q<j$.
   \item [(2)] If $u=[\mathfrak{r}_{i}, \mathfrak{r}_{j}]$ with $i>j$, then $v=[\mathfrak{l}_{p}, \mathfrak{r}_{q}]$ with $i>q>j$; or $v=[\mathfrak{r}_{p}, \mathfrak{r}_{q}]$ with $i>p>j>q$ or $p>i>q>j$.
   \item [(3)] If $u=[\mathfrak{l}_{i}, \mathfrak{r}_{j}]$, then $v=[\mathfrak{l}_{p}, \mathfrak{l}_{q}]$ with $p<i<q$; or $v=[\mathfrak{r}_{p}, \mathfrak{r}_{q}]$ with $p>j>q$; or  $v=[\mathfrak{l}_{p}, \mathfrak{r}_{q}]$ with $i>p$ and $j>q$ or $i<p$ and $j<q$.
\end{itemize}
\end{lemma}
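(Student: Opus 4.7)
My plan is to argue by planar-topology case analysis. The underlying principle I will invoke, which follows from the Jordan curve theorem applied inside the strip $\BB$, is that a simple arc $u$ with endpoints on the boundary of $\BB$ separates $\BB$ into exactly two connected components $R_1(u)$ and $R_2(u)$, and another arc $v$ crosses $u$ if and only if its two endpoints lie in distinct components. The intersection of each $R_k(u)$ with the boundary of $\BB$ is a union of intervals of marked points, so crossing becomes a combinatorial interleaving condition on subscripts. The only subtlety is that $\rr_i=(-i,0)$, so along the lower boundary read from left to right the subscripts \emph{decrease}; this reversed convention must be tracked carefully throughout.

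For case (1), with $u=[\ll_i,\ll_j]$ ($i<j$), I would first observe that the bounded region $R_1(u)$ is the cap above $u$ bounded by the top segment from $\ll_i$ to $\ll_j$, and contains exactly the marked points $\ll_p$ with $i<p<j$; the unbounded region $R_2(u)$ contains every $\rr_q$ together with $\ll_p$ for $p<i$ or $p>j$. Separating endpoints then forces the listed possibilities: $v=[\ll_p,\rr_q]$ requires $i<p<j$; $v=[\ll_p,\ll_q]$ requires exactly one of $p,q$ to lie in the open interval $(i,j)$, i.e.\ $i<p<j<q$ or $p<i<q<j$; and $v=[\rr_p,\rr_q]$ cannot cross because both endpoints lie in $R_2(u)$, consistent with the already-stated fact that upper and lower arcs do not cross. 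Case (2) will then follow from case (1) by a formal reversal: with $u=[\rr_i,\rr_j]$ ($i>j$), the cap $R_1(u)$ lies below $u$ and meets the lower boundary in exactly the $\rr_q$ with $i>q>j$, yielding the symmetric inequalities.

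For case (3), with $u=[\ll_i,\rr_j]$, my strategy is to identify the two components of $\BB\setminus u$ as a left region $R_1(u)$, containing $\ll_p$ for $p<i$ and $\rr_q$ for $q>j$ (the latter being the $\rr_q$ geometrically to the left of $\rr_j$), and a right region $R_2(u)$ containing $\ll_p$ for $p>i$ and $\rr_q$ for $q<j$. Splitting the two endpoints of $v$ between $R_1(u)$ and $R_2(u)$ then yields: $v=[\ll_p,\ll_q]$ ($p<q$) crosses iff $p<i<q$; $v=[\rr_p,\rr_q]$ ($p>q$) crosses iff $p>j>q$; and $v=[\ll_p,\rr_q]$ crosses iff $p<i$ and $q<j$, or $p>i$ and $q>j$. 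The hardest part of the argument will be the bookkeeping of the sign-reversed convention for $\rr_i$: whenever ``to the left of $\rr_j$'' appears, I must translate it as ``$\rr_q$ with $q>j$'', not $q<j$. Beyond this, the topological ingredients (the Jordan curve theorem and the planar interleaving criterion) are standard, so the lemma reduces to a careful case-by-case verification of inequalities.
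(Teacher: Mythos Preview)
Your argument is correct: the Jordan separation principle in the strip reduces crossing to the combinatorial interleaving conditions you list, and your handling of the reversed indexing on the lower boundary is accurate. Note, however, that the paper does not supply its own proof of this lemma; it is quoted verbatim from \cite{LP} (Lemma~4.2 there) and used without further justification, so there is no in-paper argument to compare against --- your direct topological verification is exactly the standard one and would serve as a complete proof.
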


We illustrate in Figure~\ref{fig:b} the different cases in the above lemma.

\begin{figure}[ht]\centering

\begin{tikzpicture}[xscale=.35,yscale=.4]
\draw[very thick] (-7,0)--(8,0);
\draw[very thick] (-7,5)--(8,5);
\draw (-1,5)node{$\bullet$}node[above]{$\ll_p$} (-4,5)node{$\bullet$}node[above]{$\ll_i$}
(5,5)node{$\bullet$}node[above]{$\ll_q$}
(2,5)node{$\bullet$}node[above]{$\ll_j$}
(2,0)node[below]{\color{white} $k$};
\draw(-4,5).. controls +(-30:3) and +(-150:3)..(2,5);
\draw(-1,5).. controls +(-30:3) and +(-150:3)..(5,5);
\end{tikzpicture}
\begin{tikzpicture}[xscale=.35,yscale=-.4]
\draw[very thick] (-7,0)--(8,0);
\draw[very thick] (-7,5)--(8,5);
\draw (-1,5)node{$\bullet$}node[below]{$\rr_p$} (-4,5)node{$\bullet$}node[below]{$\rr_i$}
(5,5)node{$\bullet$}node[below]{$\rr_q$}
(2,5)node{$\bullet$}node[below]{$\rr_j$}
(2,0)node[above]{\color{white} $k$};
\draw(-4,5).. controls +(-30:3) and +(-150:3)..(2,5);
\draw(-1,5).. controls +(-30:3) and +(-150:3)..(5,5);
\end{tikzpicture}

\begin{tikzpicture}[xscale=.35,yscale=.4]
\draw[very thick] (-7,0)--(8,0);
\draw[very thick] (-7,5)--(8,5);
\draw (0,5)node{$\bullet$}node[above]{$\ll_p$} (-3,5)node{$\bullet$}node[above]{$\ll_i$}
(-2,0)node{$\bullet$}node[below]{$\rr_q$}
(3,5)node{$\bullet$}node[above]{$\ll_j$};
\draw(-3,5).. controls +(-30:3) and +(-150:3)..(3,5);
\draw(0,5).. controls +(-170:2) and +(80:1)..(-2,0);
\end{tikzpicture}
\begin{tikzpicture}[xscale=.35,yscale=-.4]
\draw[very thick] (-7,0)--(8,0);
\draw[very thick] (-7,5)--(8,5);
\draw (0,5)node{$\bullet$}node[below]{$\rr_q$} (-3,5)node{$\bullet$}node[below]{$\rr_i$}
(-2,0)node{$\bullet$}node[above]{$\ll_p$}
(3,5)node{$\bullet$}node[below]{$\rr_j$};
\draw(-3,5).. controls +(-30:3) and +(-150:3)..(3,5);
\draw(0,5).. controls +(-170:2) and +(80:1)..(-2,0);
\end{tikzpicture}

\begin{tikzpicture}[xscale=.35,yscale=.4]
\draw[very thick] (-7,0)--(8,0);
\draw[very thick] (-7,5)--(8,5);
\draw (1,0)node{$\bullet$}node[below]{$\rr_j$} (-3,5)node{$\bullet$}node[above]{$\ll_i$}
(-2,0)node{$\bullet$}node[below]{$\rr_q$}
(3,5)node{$\bullet$}node[above]{$\ll_p$};
\draw (-3,5).. controls +(-70:2) and +(120:1) .. (1,0);
\draw (-2,0).. controls +(60:1.3) and +(-120:3) .. (3,5);
\end{tikzpicture}

\caption{Crossing arcs in $\cal B_{\infty}$}
\label{fig:b}
\end{figure}
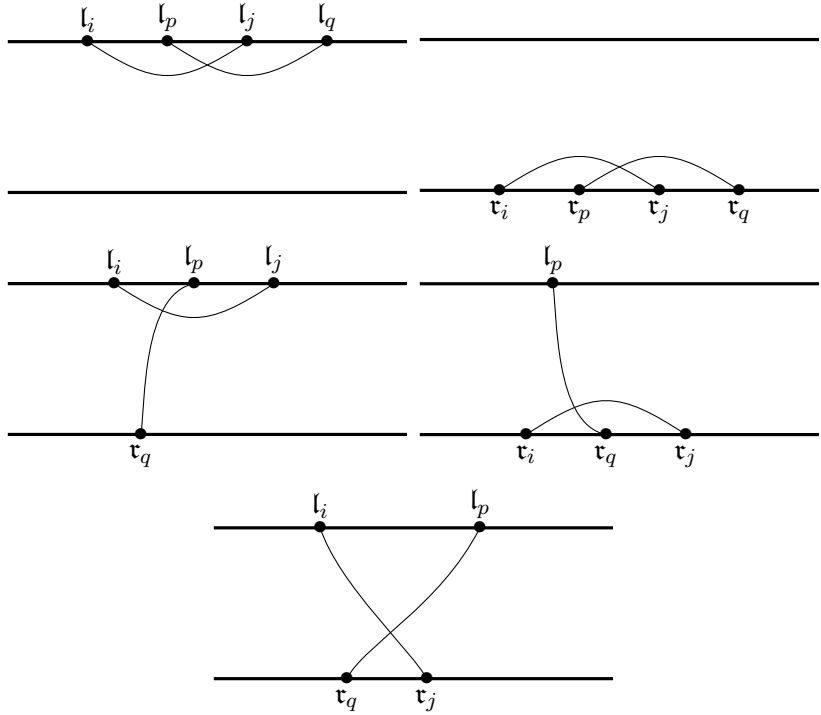


The infinite strip $\BB$ with marked points gives a geometric model for the cluster category $\CC$ in the following sense.

\begin{proposition}[Theorem 5.3 and Corollary 5.4 in \cite{LP}]\label{lem1}
There is a bijection from the set of (isoclasses of) indecomposable objects in $\CC$ to the set of (isotopy classes of) arcs in $\BB$. Moreover, let $u,v$ be arcs in $\BB$ and $M_{u}, M_{v}$ the corresponding indecomposable objects in $\CC$. Then
\begin{enumerate}
	\item $(u, v)$ is a crossing pair if and only if $\Ext^1_\CC(M_{u}, M_{v})\neq 0$; and
	\item $M_u[1]=M_{\tau u}$.
\end{enumerate}
\end{proposition}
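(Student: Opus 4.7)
The plan is to build the bijection explicitly via the Auslander--Reiten theory of $D^{b}(\rep Q)$ and then pass to the orbit category $\CC$. Since $Q$ has type $A_\infty^\infty$ without infinite paths, it is locally finite and hereditary, and every indecomposable finite-dimensional representation is a thin interval module $M_{[i,j]}$ supported on a connected subset of consecutive vertices (a Gabriel-type argument for locally finite simply-laced quivers). Consequently the indecomposables of $D^{b}(\rep Q)$ are precisely the shifts $M_{[i,j]}[n]$ for $n\in\mathbb{Z}$, and the AR-quiver forms a doubly infinite strip of meshes on which both $\tau$ and $[1]$ act as explicit combinatorial translations.

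I would then fix a fundamental domain for the free action of $\tau^{-1}[1]$ on this AR-quiver and write down a concrete map from its vertices to isotopy classes of arcs in $\BB$. The three families of arcs match three strata of indecomposables: upper arcs $[\ll_i,\ll_j]$ correspond to modules whose interval lies on one ``half'' of the strip, lower arcs $[\rr_i,\rr_j]$ correspond to the other half, and connecting arcs $[\ll_i,\rr_j]$ correspond to indecomposables whose cohomological degree mixes both sides. The endpoint indices are read off from the interval endpoints and the degree $n$; bijectivity is a bookkeeping check once a fundamental domain is fixed.

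With the bijection in place, property (2) is almost immediate: in $\CC$ the relation $\tau^{-1}[1]=\mathrm{id}$ forces $[1]=\tau$, and under the parameterization $\tau$ on indecomposables shifts the interval endpoints by $+1$, which is precisely the geometric translation $\tau$ on arcs. For property (1), I would invoke the orbit formula
\[
\Hom_\CC(X, Y) \;\cong\; \bigoplus_{k\in\mathbb{Z}} \Hom_{D^{b}(\rep Q)}\!\bigl(X,\, \tau^{-k}[k]\,Y\bigr),
\]
together with the 2-CY property of $\CC$, to reduce $\Ext^{1}_\CC(M_{u},M_{v})$ to a finite sum of Homs and $\Ext^{1}$s between interval modules in $D^{b}(\rep Q)$. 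Since Homs between interval modules in the derived category of a type $A$ quiver are governed by a transparent ``nested or staircase'' overlap rule, the nonvanishing configurations can be enumerated and matched, case by case, against the crossing types listed in the preceding lemma.

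The main obstacle is exactly this case analysis: each crossing pattern (upper--upper, lower--lower, upper--connecting, lower--connecting, connecting--connecting) has to be checked against the corresponding Hom or $\Ext^{1}$ computation in $D^{b}(\rep Q)$, and the bookkeeping of indices when passing through the $\tau^{-1}[1]$-action is delicate. In particular, the fundamental domain must be chosen so that the geometric $\tau$ on arcs corresponds exactly to the algebraic $\tau$ on objects; once this combinatorial dictionary is fixed, each individual case collapses to a short computation.
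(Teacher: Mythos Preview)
The paper does not give its own proof of this proposition: it is stated as a citation of Theorem~5.3 and Corollary~5.4 of \cite{LP}, so there is no argument in the present paper to compare your sketch against. Your outline is in the spirit of how Liu and Paquette actually establish the result, namely by analysing the Auslander--Reiten quiver of $D^{b}(\rep Q)$ (which is a $\mathbb{Z}A_\infty^\infty$-component), choosing a section to parametrise indecomposables by arcs, and then computing morphism spaces in the orbit category.

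A couple of cautions about your sketch, in case you want to turn it into an actual proof. First, the slogan ``upper arcs $=$ one half, lower arcs $=$ the other half, connecting arcs $=$ mixed cohomological degree'' is not quite how the dictionary works in \cite{LP}: the three families of arcs correspond to three regions of a single connecting component of the AR-quiver of $\CC$ (which is a $\mathbb{Z}A_\infty^\infty$), not to different cohomological degrees, and the precise labelling depends on a choice of section. Second, for part~(1) you will in practice need more than the raw orbit formula: one uses that the AR-quiver of $\CC$ is standard (morphisms are mesh-category morphisms), so that $\Hom_\CC(M_u,M_v)$ is at most one-dimensional and its nonvanishing is read off from hammocks in the AR-quiver; the crossing condition then falls out from the shape of these hammocks rather than from a bare enumeration of Ext groups between interval modules. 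None of this is a fatal gap, but the details are exactly what \cite{LP} carries out, and the present paper simply imports the conclusion.
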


The bijection in the above proposition induces a bijection between the subcategories of $\CC$ and the sets of arcs in $\cal B_{\infty}$. For a subcategory $\X$ of $\mathscr{C}$, we denote the corresponding set of arcs in $\BB$ by $\XX$.

We shall use the following notions, which is essentially from \cite{LP}.

\begin{definition}
Let $\TT$ be a set of arcs in $\BB$.
\begin{enumerate}
\item A marked point $\pp$ is called {\em upper left $\TT$-bounded} if there is an integer $j$ such that $[\pp,\ll_i]\notin\TT$ for any $i<j$.
\item A marked point $\pp$ is called {\em upper right $\TT$-bounded} if there is an integer $j$ such that $[\pp,\ll_i]\notin\TT$ for any $i>j$.
\item A marked point $\pp$ is called {\em lower left $\TT$-bounded} if there is an integer $j$ such that $[\pp,\rr_i]\notin\TT$ for any $i>j$.
\item A marked point $\pp$ is called {\em lower right $\TT$-bounded} if there is an integer $j$ such that $[\pp,\rr_i]\notin\TT$ for any $i<j$.
\end{enumerate}
\end{definition}

\section{Compact Ptolemy diagrams of \texorpdfstring{$\BB$}{B infinity}}

In this section, we introduce and study $\tau$-compact Ptolemy diagrams of $\BB$, which will be a geometric model for cotorsion pairs in $\CC$ in the next section.

For any marked point $\pp$ in $\BB$, set
\[[\pp,-]=\{[\pp,\qq]\mid \qq\text{ is a marked point in }\BB \}.\]
We define a linear order on $[\pp,-]$, that $[\pp,\ii]>_\pp[\pp,\jj]$ if and only if $[\pp,\jj]$ follows $[\pp,\ii]$ in the clockwise orientation. More explicitly,
\begin{itemize}
	\item when $\pp$ is an upper marked point, say $\pp=\ll_p$, we have \[[\ll_p,\ll_f]>_\pp[\ll_p,\ll_e]>_\pp[\ll_p,\rr_d]>_\pp[\ll_p,\rr_c]>_\pp[\ll_p,\ll_b]>_\pp[\ll_p,\ll_a]\]
	for any $b<a<p<f<e$ and $d<c$ (see the upper picture in Figure~\ref{fig:order});
	\item when $\pp$ is a lower marked point, say $\pp=\rr_p$, we have
	\[[\rr_p,\rr_f]>_\pp[\rr_p,\rr_e]>_\pp[\rr_p,\ll_d]>_\pp[\rr_p,\ll_c]>_\pp[\rr_p,\rr_b]>_\pp[\rr_p,\rr_a]\]
	for any $b<a<p<f<e$ and $d<c$ (see the lower picture in Figure~\ref{fig:order}).
\end{itemize}
Note that this order is not compatible with that given in \cite{LP}.

\begin{figure}[ht]\centering
	\begin{tikzpicture}[scale=.5]
	\draw[very thick] (-7,0)--(8,0);
	\draw[very thick] (-7,5)--(8,5);
	\draw (0,5)node{$\bullet$}node[above]{$\ll_p$} (-2,5)node{$\bullet$}node[above]{$\ll_a$} (-5,5)node{$\bullet$}node[above]{$\ll_b$}
	(-3,0)node{$\bullet$}node[below]{$\rr_c$}
	(3,0)node{$\bullet$}node[below]{$\rr_d$}
	(2,5)node{$\bullet$}node[above]{$\ll_f$} (5,5)node{$\bullet$}node[above]{$\ll_e$};
	\draw(-2,5).. controls +(-45:1) and +(-135:1)..(0,5);
	\draw(-5,5).. controls +(-60:2) and +(-120:2)..(0,5);
	\draw(0,5).. controls +(-110:2) and +(80:1)..(-3,0);
	\draw(0,5).. controls +(-70:2) and +(100:1)..(3,0);
	\draw(0,5).. controls +(-45:1) and +(-135:1)..(2,5);
	\draw(0,5).. controls +(-60:2) and +(-120:2)..(5,5);
	\end{tikzpicture}
	
	\begin{tikzpicture}[scale=.5]
	\draw[very thick] (-7,0)--(8,0);
	\draw[very thick] (-7,5)--(8,5);
	\draw (0,0)node{$\bullet$}node[below]{$\rr_p$} (-2,0)node{$\bullet$}node[below]{$\rr_f$} (-5,0)node{$\bullet$}node[below]{$\rr_e$}
	(-3,5)node{$\bullet$}node[above]{$\ll_d$}
	(3,5)node{$\bullet$}node[above]{$\ll_c$}
	(2,0)node{$\bullet$}node[below]{$\rr_a$} (5,0)node{$\bullet$}node[below]{$\rr_b$};
	\draw(-2,0).. controls +(45:1) and +(135:1)..(0,0);
	\draw(-5,0).. controls +(60:2) and +(120:2)..(0,0);
	\draw(0,0).. controls +(110:2) and +(-80:1)..(-3,5);
	\draw(0,0).. controls +(70:2) and +(-100:1)..(3,5);
	\draw(0,0).. controls +(45:1) and +(135:1)..(2,0);
	\draw(0,0).. controls +(60:2) and +(120:2)..(5,0);
	\end{tikzpicture}
	\caption{Linear order on the set $[\pp,-]$}
	\label{fig:order}
\end{figure}

\subsection{Definition of compact Ptolemy diagrams}

The following definition of a Ptolemy diagram is an analogue of that in \cite{HJR1}.

\begin{definition}
A set $\TT$ of arcs in $\BB$ is called a Ptolemy diagram of $\BB$ if the following condition holds.
\begin{itemize}
   \item [$\cona$] For any two crossing arcs $[\mathfrak{p}, \mathfrak{q}]$ and $[\mathfrak{i}, \mathfrak{j}]$ in $\TT$, those of $[\mathfrak{p}, \mathfrak{i}]$, $[\mathfrak{p}, \mathfrak{j}]$, $[\mathfrak{q}, \mathfrak{i}]$, $[\mathfrak{q}, \mathfrak{j}]$ which are arcs are in $\TT$. (See Figure~\ref{fig:c}.)
 \end{itemize}
\end{definition}

\begin{figure}[ht]\centering
	
	\begin{tikzpicture}[xscale=.35,yscale=.4]
	\draw[very thick] (-7,0)--(8,0);
	\draw[very thick] (-7,5)--(8,5);
	\draw (-1,5)node{$\bullet$}node[above]{$\ii$} (-4,5)node{$\bullet$}node[above]{$\pp$}
	(5,5)node{$\bullet$}node[above]{$\jj$}
	(2,5)node{$\bullet$}node[above]{$\qq$}
	(2,0)node[below]{\color{white} $k$};
	\draw(-4,5).. controls +(-30:3) and +(-150:3)..(2,5);
	\draw(-1,5).. controls +(-30:3) and +(-150:3)..(5,5);
	\draw[dashed] (-4,5).. controls +(-25:1.3) and +(-165:1) .. (-1,5);
	\draw[dashed] (2,5).. controls +(-25:1.3) and +(-165:1) .. (5,5);
	\draw[dashed] (-1,5).. controls +(-25:1) and +(-165:1) .. (2,5);
	\draw[dashed] (-4,5).. controls +(-55:3) and +(-125:3) .. (5,5);
	\end{tikzpicture}
	\begin{tikzpicture}[xscale=.35,yscale=-.4]
	\draw[very thick] (-7,0)--(8,0);
	\draw[very thick] (-7,5)--(8,5);
	\draw (-1,5)node{$\bullet$}node[below]{$\ii$} (-4,5)node{$\bullet$}node[below]{$\pp$}
	(5,5)node{$\bullet$}node[below]{$\jj$}
	(2,5)node{$\bullet$}node[below]{$\qq$}
	(2,0)node[above]{\color{white} $k$};
	\draw(-4,5).. controls +(-30:3) and +(-150:3)..(2,5);
	\draw(-1,5).. controls +(-30:3) and +(-150:3)..(5,5);
	\draw[dashed] (-4,5).. controls +(-25:1.3) and +(-165:1) .. (-1,5);
	\draw[dashed] (2,5).. controls +(-25:1.3) and +(-165:1) .. (5,5);
	\draw[dashed] (-1,5).. controls +(-25:1) and +(-165:1) .. (2,5);
	\draw[dashed] (-4,5).. controls +(-55:3) and +(-125:3) .. (5,5);
	\end{tikzpicture}
	
	\begin{tikzpicture}[xscale=.35,yscale=.4]
	\draw[very thick] (-7,0)--(8,0);
	\draw[very thick] (-7,5)--(8,5);
	\draw (0,5)node{$\bullet$}node[above]{$\ii$} (-3,5)node{$\bullet$}node[above]{$\pp$}
	(-2,0)node{$\bullet$}node[below]{$\jj$}
	(3,5)node{$\bullet$}node[above]{$\qq$};
	\draw(-3,5).. controls +(-30:3) and +(-150:3)..(3,5);
	\draw(0,5).. controls +(-170:2) and +(80:1)..(-2,0);
	\draw[dashed] (-3,5).. controls +(-25:1.3) and +(-165:1) .. (0,5);
	\draw[dashed] (0,5).. controls +(-25:1.3) and +(-165:1) .. (3,5);
	\draw[dashed] (-3,5).. controls +(-60:2) and +(90:1) .. (-2,0);
	\draw[dashed] (-2,0).. controls +(60:1.3) and +(-120:3) .. (3,5);
	\end{tikzpicture}
	\begin{tikzpicture}[xscale=.35,yscale=-.4]
	\draw[very thick] (-7,0)--(8,0);
	\draw[very thick] (-7,5)--(8,5);
	\draw (0,5)node{$\bullet$}node[below]{$\qq$} (-3,5)node{$\bullet$}node[below]{$\ii$}
	(-2,0)node{$\bullet$}node[above]{$\pp$}
	(3,5)node{$\bullet$}node[below]{$\jj$};
	\draw(-3,5).. controls +(-30:3) and +(-150:3)..(3,5);
	\draw(0,5).. controls +(-170:2) and +(80:1)..(-2,0);
	\draw[dashed] (-3,5).. controls +(-25:1.3) and +(-165:1) .. (0,5);
	\draw[dashed] (0,5).. controls +(-25:1.3) and +(-165:1) .. (3,5);
	\draw[dashed] (-3,5).. controls +(-60:2) and +(90:1) .. (-2,0);
	\draw[dashed] (-2,0).. controls +(60:1.3) and +(-120:3) .. (3,5);
	\end{tikzpicture}
	
	\begin{tikzpicture}[xscale=.35,yscale=.4]
	\draw[very thick] (-7,0)--(8,0);
	\draw[very thick] (-7,5)--(8,5);
	\draw (1,0)node{$\bullet$}node[below]{$\ii$} (-3,5)node{$\bullet$}node[above]{$\pp$}
	(-2,0)node{$\bullet$}node[below]{$\jj$}
	(3,5)node{$\bullet$}node[above]{$\qq$};
	\draw[dashed] (-3,5).. controls +(-30:3) and +(-150:3)..(3,5);
	\draw[dashed] (1,0).. controls +(150:1) and +(50:1)..(-2,0);
	\draw (-3,5).. controls +(-70:2) and +(120:1) .. (1,0);
	\draw[dashed] (1,0).. controls +(90:1) and +(-100:1) .. (3,5);
	\draw[dashed] (-3,5).. controls +(-90:2) and +(90:1) .. (-2,0);
	\draw (-2,0).. controls +(60:1.3) and +(-120:3) .. (3,5);
	\end{tikzpicture}
	\caption{Condition $\cona$}
	\label{fig:c}
\end{figure}

For any set $\TT$ of arcs in $\cal B_{\infty}$, denote by \[\nc\TT=\{u\mid\text{$u$ does not cross any arcs in }\TT\}.\]
A large class of Ptolemy diagrams can be obtained in the following way.

\begin{lemma}\label{lem:9}
For any set $\TT$ of arcs in $\BB$, the set $\nc\TT$ is a Ptolemy diagram.
\end{lemma}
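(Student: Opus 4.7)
My plan is to prove Lemma~\ref{lem:9} by contradiction. Suppose $\nc\TT$ fails condition $\cona$: then there exist crossing arcs $u=[\pp,\qq]$ and $v=[\ii,\jj]$ in $\nc\TT$ such that one of the four connecting arcs $[\pp,\ii]$, $[\pp,\jj]$, $[\qq,\ii]$, $[\qq,\jj]$---say $[\pp,\ii]$---is an arc and crosses some $w\in\TT$. Since $u,v\in\nc\TT$, neither $u$ nor $v$ crosses $w$. I will derive a contradiction by exhibiting a crossing between $w$ and one of $u$ or $v$.

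The central step will be a topological argument. The arcs $u$ and $v$ meet in a unique interior point $x$, and together with the segments $[\pp,x]\subset u$ and $[x,\ii]\subset v$, the arc $[\pp,\ii]$ bounds a closed topological disk $R$ inside $\BB$. A representative of $w$ that crosses $[\pp,\ii]$ must therefore enter $R$. Since the endpoints of $w$ are marked points on $\partial\BB$, and the only marked points on $\partial R$ are $\pp$ and $\ii$, the arc $w$ must either leave $R$ by crossing one of the boundary segments $[\pp,x]$ or $[x,\ii]$---i.e., cross $u$ or $v$, producing the required contradiction---or have $\pp$ or $\ii$ as an endpoint.

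The degenerate endpoint case is the one I expect to require more care. Suppose $w=[\pp,\mm]$ shares the endpoint $\pp$ with $[\pp,\ii]$. Here the topological argument above breaks down, so I would instead appeal directly to the explicit crossing criterion in Lemma~4.2 of \cite{LP}. Using the linear order $>_\pp$ around $\pp$ introduced in this section, the hypothesis that $w$ crosses $[\pp,\ii]$ pins down the angular sector at $\pp$ containing $\mm$. Combining this with the hypothesis that $u=[\pp,\qq]$ and $v=[\ii,\jj]$ cross---which constrains the position of $\qq$ relative to $\pp$ and $\ii$---one sees that $\mm$ lies on the far side of $v$ from $\pp$, forcing $w$ to cross $v$ and contradicting $v\in\nc\TT$. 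The case $w=[\ii,\mm]$ is symmetric.

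The main obstacle will be organizing the case analysis cleanly, since Lemma~4.2 of \cite{LP} distinguishes several types of crossing pair $(u,v)$ (upper-upper, lower-lower, upper-connecting, lower-connecting, connecting-connecting), and for each such pair and each of the four connecting arcs the degenerate-endpoint subcase must be checked. However, the topological template and the $>_\pp$-bookkeeping apply uniformly across the cases, so I expect the proof to remain compact rather than devolving into heavy casework.
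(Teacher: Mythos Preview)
Your approach is correct and matches the paper's: assume some $w\in\TT$ crosses $[\pp,\ii]$ and derive that $w$ must cross $[\pp,\qq]$ or $[\ii,\jj]$. However, the ``degenerate endpoint case'' you plan to handle is vacuous: by the convention stated in the paper just before Lemma~4.2 of \cite{LP}, two crossing arcs never share a common endpoint, so if $w$ crosses $[\pp,\ii]$ then neither $\pp$ nor $\ii$ can be an endpoint of $w$. Your topological disk argument therefore already finishes the proof, and no appeal to Lemma~4.2 of \cite{LP}, the order $>_\pp$, or the case analysis you anticipate is needed. The paper's proof is accordingly a single sentence.
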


\begin{proof}
Let $[\pp,\qq]$ and $[\ii,\jj]$ be two crossing arcs in $\nc\TT$. We shall prove that if $[\pp,\ii]$ is an arc in $\BB$, then it is in $\nc\TT$. Assume conversely that there is an arc $u$ in $\TT$ crossing $[\pp,\ii]$. Then $u$ crosses either $[\pp,\qq]$ or $[\ii,\jj]$, a contradiction.
\end{proof}

Let $u_1$ and $u_2$ be crossing arcs in $\BB$. An arc or an edge $u_3$ is called a \emph{middle term from $u_2$ to $u_1$} if $u_2<_{\pp_1}u_3<_{\pp_2}u_1$ for some marked points $\pp_1$ and $\pp_2$ in $\BB$. It is easy to see that there are exactly two middle arcs from $u_2$ to $u_1$ and they are a pair of opposite sides of the quadrangle whose diagonals are $u_1$ and $u_2$. See Figure~\ref{fig:e}.

\begin{figure}[ht]\centering
	\begin{tikzpicture}[xscale=.35,yscale=.4]
	\draw (3,0)node{$\bullet$} (-3,5)node{$\bullet$}
	(-2,0)node{$\bullet$}
	(3,5)node{$\bullet$};
	\draw (-.3,4.5)node{$u_1$}
	(-.2,0.2)node{$u_2$};
	\draw (-3,5).. controls +(-20:3) and +(120:3)..(3,0);
	\draw (3,5).. controls +(150:1) and +(20:2)..(-2,0);
	\draw[dashed] (-3,5).. controls +(-90:2) and +(90:1) .. (-2,0);
	\draw[dashed] (3,5).. controls +(-90:2) and +(90:1) .. (3,0);
	\end{tikzpicture}
	\caption{Middle terms from $u_2$ to $u_1$}
	\label{fig:e}
\end{figure}
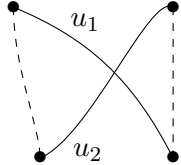

\begin{definition}
Let $\Omega$ be a set of arcs in $\BB$.
\begin{enumerate}
	\item A subset $\Sigma$ of $\Omega$ is called a {\em $\tau$-basis}  if for any arc $u_1\in\Omega$, there is an arc $u_2\in\Sigma$ such that $\tau u_2$ crosses $u_1$ and any middle term from $u_2$ to $u_1$ is in $\Omega$ when $u_2$ crosses $u_1$.
	\item A subset $\Sigma$ of $\Omega$ is called a {\em $\tau^{-1}$-basis}  if for any arc $u_1\in\Omega$, there is an arc $u_2\in\Sigma$ such that $\tau^{-1} u_2$ crosses $u_1$ and any middle term from $u_1$ to $u_2$ is in $\Omega$ when $u_2$ crosses $u_1$.
\end{enumerate}
\end{definition}

The following easy lemma is helpful for understanding the notion of $\tau$-basis.

\begin{lemma}\label{lem:4}
	Let $u_1$ and $u_2$ be two arcs in $\BB$ with $\tau u_2$ crossing $u_1$. Then precisely one of the following situations occurs:
	\begin{itemize}
		\item $u_1\geq_\pp u_2$ for some marked point $\pp$,
		\item $u_1$ crosses $u_2$.
	\end{itemize}
\end{lemma}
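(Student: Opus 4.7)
The plan is to show first that the two situations are mutually exclusive, and then that at least one must hold. Mutual exclusion is immediate: if $u_1 \geq_\pp u_2$ then both arcs are incident to $\pp$, and by the definition of a crossing pair, arcs sharing an endpoint cannot cross.

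For the other direction, I would assume $u_1$ does not cross $u_2$ and prove $u_1 \geq_\pp u_2$ for some marked point $\pp$ by a case analysis on the type of $u_2$, namely upper arc $[\ll_i,\ll_j]$, lower arc $[\rr_i,\rr_j]$, or connecting arc $[\ll_i,\rr_j]$. For each type, apply Lemma~2.6 to the crossing pair $(\tau u_2, u_1)$ to enumerate the possible forms of $u_1$ together with the arithmetic conditions on its endpoints. Since the endpoints of $\tau u_2$ differ from those of $u_2$ by a unit shift of indices, the crossing criterion for $(\tau u_2,u_1)$ is obtained from that for $(u_2,u_1)$ by shifting certain inequalities; the non-crossing hypothesis on $(u_2,u_1)$ then forces the relevant strict inequalities to degenerate into equalities of indices. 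These equalities are exactly the configurations in which $u_1$ and $u_2$ share an endpoint $\pp$. In each such configuration, reading off the types of the other endpoints of $u_1$ and $u_2$ and consulting the piecewise description of $>_\pp$ given at the beginning of Section~3 yields $u_1 \geq_\pp u_2$.

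The main obstacle is purely bookkeeping: Lemma~2.6 produces several sub-cases for each of the three types of $u_2$, and in each sub-case one must correctly identify the shared endpoint $\pp$ and place both $u_1$ and $u_2$ into the correct piece of the linear order at $\pp$ (in particular keeping track of whether the other endpoint of each arc is an upper or a lower marked point, since the order $>_\pp$ treats these differently). There is no conceptual difficulty beyond this. The degenerate boundary $u_1 = u_2$ genuinely occurs, since any arc crosses its own $\tau$-shift; in this case $u_1 \geq_\pp u_2$ holds with equality at either endpoint of $u_1$.
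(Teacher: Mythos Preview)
The paper does not actually supply a proof of this lemma; it is introduced with the phrase ``The following easy lemma is helpful for understanding the notion of $\tau$-basis'' and then stated without justification. Your proposed approach---mutual exclusion from the fact that crossing arcs share no endpoint, followed by a case analysis on the type of $u_2$ using the explicit crossing criterion (Lemma~4.2 of \cite{LP}) applied to both $(\tau u_2,u_1)$ and $(u_2,u_1)$---is correct and is precisely the routine verification the authors are leaving to the reader. There is nothing to compare against; you have written out what the paper omits.
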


Let $\TT$ be a set of arcs in $\BB$. For each arc $u$ in $\BB$, denote by $\TT_u$ the subset of $\TT$ consisting of the arcs crossing $u$.

\begin{definition}
 A set $\TT$ of arcs in $\BB$ is called $\tau$-compact (resp. $\tau^{-1}$-compact) if $\TT_u$ admits a finite $\tau$-basis (resp. $\tau^{-1}$-basis) for every arc $u$ in $\BB$. A set $\TT$ of arcs in $\BB$ is called compact if it is both $\tau$-compact and $\tau^{-1}$-compact.
\end{definition}

\subsection{A criterion for a Ptolemy diagram to be compact}

This subsection is devoted to showing the following criterion for a Ptolemy diagram of $\BB$ to be $\tau$-compact.

\begin{theorem}\label{thm:cri}
A Ptolemy diagram $\TT$ of $\BB$ is $\tau$-compact if and only if $\TT$ satisfies the following conditions.
\begin{itemize}
\item[$\conb$]  Any marked point which is lower right $\TT$-bounded is upper right $\TT$-bounded, and any marked point which is upper left $\TT$-bounded is lower left $\TT$-bounded.
\item[$\conc$] $\TT\cup\nc\TT$ contains connecting arcs.
\end{itemize}
\end{theorem}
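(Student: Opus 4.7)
The plan is to prove both implications separately.

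For the forward direction, I argue by contrapositive. Assume $\TT$ is a $\tau$-compact Ptolemy diagram; we verify $\conb$ and $\conc$. For $\conb$, suppose for contradiction that $\pp=\ll_p$ is lower right $\TT$-bounded but not upper right $\TT$-bounded (the three other cases being analogous by a top-bottom or left-right symmetry of $\BB$). Choose a sequence $[\ll_p,\ll_{i_n}]\in\TT$ with $i_n\to\infty$ and fix $u=[\ll_{p+1},\rr_0]$; each $[\ll_p,\ll_{i_n}]$ with $i_n$ large then lies in $\TT_u$. For any finite $\Sigma\subseteq\TT_u$, Lemma~\ref{lem:4} demands that every $u_1=[\ll_p,\ll_{i_n}]$ be handled by some $u_2\in\Sigma$ either via a common endpoint with $u_1\geq_{\pp'} u_2$, or via a crossing $u_1\times u_2$ with middle terms in $\TT_u$. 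The common-endpoint option fails for cofinitely many $n$ because the arcs $[\ll_p,\ll_{i_n}]$ strictly decrease in the order $>_{\ll_p}$ as $i_n$ grows in the upper-right subfamily, so no fixed $u_2$ sits below all of them; the crossing option, after restricting $u_2$ to one of the few allowed forms by the crossing classification, forces via the Ptolemy condition $\cona$ applied to $u_1$ and $u_2$ arcs of shape $[\ll_p,\rr_*]$ with $*$ arbitrarily negative to lie in $\TT$, contradicting the lower-right-boundedness. For $\conc$, suppose no connecting arc lies in $\TT\cup\nc\TT$, so every connecting arc is crossed by some arc of $\TT$; choosing $u$ to be a suitable connecting arc and examining the witnessing crossings of $[\ll_n,\rr_0]$ as $n\to\pm\infty$ produces an infinite family in $\TT_u$ whose endpoints march to infinity on both sides of $u$, again obstructing both alternatives of Lemma~\ref{lem:4} for any finite $\Sigma$.

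For the backward direction, fix an arc $u$ of $\BB$. By the crossing classification (Lemma~4.2 of \cite{LP}), every arc $v\in\TT_u$ has at least one endpoint in a canonical finite set $E_u$ of marked points relative to $u$: when $u$ is an upper (or lower) arc, $E_u$ is the set of marked points strictly between its endpoints on the relevant boundary; when $u$ is a connecting arc, the analogue is obtained by using a reference connecting arc provided by $\conc$ (in $\TT$ or in $\nc\TT$) to partition $\BB$ into two half-strips and isolate a finite interior set on each side. For each $\pp\in E_u$ with $\TT_u\cap[\pp,-]\neq\emptyset$, define $\sigma_\pp$ to be the minimum of $\TT_u\cap[\pp,-]$ in the linear order $>_\pp$, and set $\Sigma=\{\sigma_\pp\}_{\pp\in E_u}$. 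The crucial lemma is that $\sigma_\pp$ always exists under $\conb$: the order $>_\pp$ stratifies $[\pp,-]$ into three successive subfamilies (upper-right, lower, upper-left), the minimum of $\TT_u\cap[\pp,-]$ lies in the lowest non-empty one, and $\conb$ ensures that whenever a higher subfamily is unbounded at its bottom (and so has no minimum there), the next subfamily down is forced to be non-empty and contributes one. Then $\Sigma$ is finite, and for any $u_1\in\TT_u$, some endpoint $\pp\in E_u$ of $u_1$ satisfies $u_1\geq_\pp \sigma_\pp$ by minimality. Lemma~\ref{lem:4} implies $\tau\sigma_\pp$ crosses $u_1$, and since $u_1$ and $\sigma_\pp$ share the endpoint $\pp$ they do not themselves cross, so no middle-term check is required. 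Thus $\Sigma$ is a $\tau$-basis of $\TT_u$.

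The main obstacle is the backward direction in the case that $u$ is itself a connecting arc: naively $E_u$ is infinite, and the reference connecting arc provided by $\conc$ must be deployed carefully to reduce to the finite-endpoint situation of the upper/lower case on each side. Verifying the existence of $\sigma_\pp$ in all configurations also requires a case analysis tracking how $\conb$ couples the unboundedness of the "far ends" of $>_\pp$ with the non-emptiness of the various subfamilies in $\TT_u$.
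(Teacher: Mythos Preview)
Your outline matches the paper's structure in the easy parts (upper/lower arcs via minimal elements of $\TT_u\cap[\pp,-]$, and the connecting-arc case when $\nc\TT$ already contains a connecting arc), but there is a genuine gap in the backward direction that you flag as an ``obstacle'' without resolving.

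When $u=[\ll_p,\rr_q]$ is connecting and the reference connecting arc supplied by $\conc$ lies in $\TT$ but \emph{not} in $\nc\TT$, your ``partition $\BB$ into two half-strips and isolate a finite interior set $E_u$'' idea does not go through: arcs of $\TT_u$ may cross the reference arc, so there is no finite set of marked points through which every element of $\TT_u$ must pass. The paper does \emph{not} reduce to a finite endpoint set here. Instead it decomposes $\TT_u=A_1\cup A_2$ with $A_1=\bigcup_{p'>p}\TT_u\cap[\ll_{p'},-]$, takes for each $p'$ the minimal element $[\ll_{p'},\qq(p')]$, and then builds the $\tau$-basis out of the minimal elements of $\TT_u\cap[\qq(p'),-]$. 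The heart of the argument is showing that the set $\{\qq(p'):p'>p\}$ is \emph{finite}; this uses the Ptolemy condition together with the existence of a connecting arc in $\TT$ in an essential way. Moreover, in this basis the chosen $u_2$ typically \emph{does} cross $u_1$, so the middle-term condition is not vacuous and has to be checked---your proposal assumes it away by forcing $u_1$ and $\sigma_\pp$ to share an endpoint, which is exactly what is unavailable in this case.

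Two smaller issues in the forward direction. For $\conb$, your sketch invokes $\cona$ to produce arcs $[\ll_p,\rr_*]$ with $*$ arbitrarily negative; this is not the right contradiction. The paper's argument (which does not use the Ptolemy condition at all for $\conb$) fixes $u=[\ll_{p+1},\rr_s]$ with $s$ chosen past the lower-right bound, so that $\TT_u\cap[\ll_p,-]$ contains only the upper arcs $[\ll_p,\ll_t]$; then for large $u_1=[\ll_p,\ll_t]$ any candidate $u_2\in\Sigma$ must actually cross $u_1$, and the middle term $[\ll_r,\ll_t]$ fails to cross $u$, contradicting the $\tau$-basis property. For $\conc$, your one-sentence sketch does not supply the mechanism; the paper argues that if $\nc\TT$ has no connecting arc then every upper (or every lower) point is covered by an arc of $\TT$, shows there is no maximal covering arc over a given $\ll_p$, and then uses this to force a connecting arc into any finite $\tau$-basis $\Sigma$ of $\TT_u$.
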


We also give the dual of $\conb$ as follows.
\begin{itemize}
\item[$\conbp$] Any marked point which is lower left $\TT$-bounded is upper left $\TT$-bounded, and any marked point which is upper right $\TT$-bounded is lower right $\TT$-bounded.
\end{itemize}
One can prove dually that a Ptolemy diagram of $\BB$ is $\tau^{-1}$-compact if and only if it satisfies $\conbp$ and $\conc$.

\begin{example}
Using Theorem~\ref{thm:cri}, we show the following Ptolemy diagrams to be $\tau$-compact or $\tau^{-1}$-compact.
\begin{enumerate}
	\item Any Ptolemy diagram of $\BB$ consisting of finitely many arcs is $\tau$-compact, e.g. $\TT_1=\{[\ll_{-2},\ll_1],[\ll_{-2},\rr_2],[\ll_{-2},\rr_{-2}],[\ll_1,\rr_2],[\ll_1,\rr_{-2}],[\rr_2,\rr_{-2}]\}$ shown in (1) of Figure~\ref{fig:d}. This is because, any marked point in $\BB$ is $\TT_1$-bounded in any of the four directions and $\nc\TT_1$ contains connecting arcs.
	\item $\TT_2=\TT_1\cup\{[\ll_1,\ll_p]\mid p\geq 3 \}$, see (2) in Figure~\ref{fig:d}. Note that $\ll_1$ is lower right $\TT_2$-bounded but not upper right $\TT_2$-bounded. Hence $\TT_2$ is not $\tau$-compact. (But $\TT_2$ is $\tau^{-1}$-compact.)
	\item $\TT_3=\TT_1\cup\{[\ll_1,\rr_q]\mid q\leq -5 \}$, see (3) in Figure~\ref{fig:d}. All marked points in $\BB$ are upper right $\TT_3$-bounded and lower left $\TT_3$-bounded. This, together with $\nc\TT_3$ containing connecting arcs, implies that $\TT_3$ is $\tau$-compact. (But $\TT_3$ is not $\tau^{-1}$-compact since it does not satisfy $\conbp$.)
\end{enumerate}
\end{example}

\begin{figure}[ht]\centering
	\begin{tikzpicture}[scale=.6]
	\draw[very thick] (-7,0)--(8,0);
	\draw[very thick] (-7,5)--(8,5);
	\foreach \x in {-6,-5,-4,...,7}
	\draw (\x,0)node{$\bullet$} (\x,5)node{$\bullet$}
	(\x,5)node[above]{$\ll_{\x}$};
	\foreach \x in {-7,-6,-5,...,6}
	\draw (-\x,0)node[below]{$\rr_{\x}$};
	\draw(-2,5).. controls +(-60:1) and +(-120:1)..(1,5);
	\draw(-2,5).. controls +(-100:2) and +(80:1)..(-2,0);
	\draw(1,5).. controls +(-100:2) and +(80:1)..(-2,0);
	\draw(-2,5).. controls +(-80:2) and +(80:1)..(2,0);
	\draw(-2,0).. controls +(30:2) and +(150:2)..(2,0);
	\draw(1,5).. controls +(-100:2) and +(80:1)..(2,0);
	\draw(0,-1.5)node{(1)};
	\end{tikzpicture}
	\begin{tikzpicture}[scale=.6]
	\draw[very thick] (-7,0)--(8,0);
	\draw[very thick] (-7,5)--(8,5);
	\foreach \x in {-6,-5,-4,...,7}
	\draw (\x,0)node{$\bullet$} (\x,5)node{$\bullet$}
	(\x,5)node[above]{$\ll_{\x}$};
	\foreach \x in {-7,-6,-5,...,6}
	\draw (-\x,0)node[below]{$\rr_{\x}$};
	\draw(-2,5).. controls +(-60:1) and +(-120:1)..(1,5);
	\draw(-2,5).. controls +(-100:2) and +(80:1)..(-2,0);
	\draw(1,5).. controls +(-100:2) and +(80:1)..(-2,0);
	\draw(-2,5).. controls +(-80:2) and +(80:1)..(2,0);
	\draw(-2,0).. controls +(30:2) and +(150:2)..(2,0);
	\draw(1,5).. controls +(-100:2) and +(80:1)..(2,0);
	\draw(0,-1.5)node{(2)};
	\draw(1,5).. controls +(-30:1) and +(-150:1)..(3,5);
	\draw(1,5).. controls +(-40:1.4) and +(-140:1.4)..(4,5);
	\draw(1,5).. controls +(-50:1.7) and +(-130:1.7)..(5,5);
	\draw(1,5).. controls +(-60:2) and +(-120:2)..(6,5);
	\draw(1,5).. controls +(-70:2.3) and +(-110:2.3)..(7,5);
	\draw(7.4,4)node{$\cdots$};
	\end{tikzpicture}
	\begin{tikzpicture}[scale=.6]
	\draw[very thick] (-7,0)--(8,0);
	\draw[very thick] (-7,5)--(8,5);
	\foreach \x in {-6,-5,-4,...,7}
	\draw (\x,0)node{$\bullet$} (\x,5)node{$\bullet$}
	(\x,5)node[above]{$\ll_{\x}$};
	\foreach \x in {-7,-6,-5,...,6}
	\draw (-\x,0)node[below]{$\rr_{\x}$};
	\draw(-2,5).. controls +(-60:1) and +(-120:1)..(1,5);
	\draw(-2,5).. controls +(-100:2) and +(80:1)..(-2,0);
	\draw(1,5).. controls +(-100:2) and +(80:1)..(-2,0);
	\draw(-2,5).. controls +(-80:2) and +(80:1)..(2,0);
	\draw(-2,0).. controls +(30:2) and +(150:2)..(2,0);
	\draw(1,5).. controls +(-100:2) and +(80:1)..(2,0);
	\draw(0,-1.5)node{(3)};
	\draw(1,5).. controls +(-100:1) and +(130:1)..(5,0);
	\draw(1,5).. controls +(-90:1) and +(120:1)..(6,0);
	\draw(1,5).. controls +(-80:1) and +(110:1)..(7,0);
	\draw(7.4,1.5)node{$\cdots$};
	\end{tikzpicture}
	\caption{Ptolemy diagrams of $\BB$}\label{fig:d}
\end{figure}

To prove Theorem~\ref{thm:cri}, we shall need some lemmas.

\begin{lemma}\label{lem:easy}
	Let $\TT$ be a set of arcs in $\BB$, satisfies $\conb$. Then $\TT_u$ also satisfies $\conb$, for every arc $u$ in $\BB$.
\end{lemma}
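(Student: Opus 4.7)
The plan is to prove the first half of $\conb$ for $\TT_u$ — namely, that lower right $\TT_u$-boundedness of a marked point $\pp$ implies upper right $\TT_u$-boundedness — since the other half will follow by a fully dual argument. Throughout I fix an arc $u$ in $\BB$.

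My main observation will be a purely geometric stabilization statement, independent of $\TT$: for any marked point $\pp$, there exists an integer $N$ such that for all $i<-N$ and all $j>N$, the arc $[\pp,\rr_i]$ crosses $u$ if and only if the arc $[\pp,\ll_j]$ does. Intuitively, both sequences of endpoints $\rr_i$ ($i\to-\infty$) and $\ll_j$ ($j\to+\infty$) escape into the right end of the strip $\BB$, so eventually lie in the same connected component of $\BB\setminus u$, which forces the crossing behaviors with $u$ to coincide. To establish this rigorously, I will split on whether $\pp$ is upper or lower and on whether $u$ is an upper, lower, or connecting arc, and apply the crossing criterion (Lemma~4.2 of \cite{LP}) to reduce, for $|i|$ and $j$ large enough, the crossing condition to a single inequality among the integer parameters of $\pp$ and the endpoints of $u$; in each of the six cases the same inequality governs both families.

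Granted the observation, I will pick $\pp$ lower right $\TT_u$-bounded with witness $j_0$ (so $[\pp,\rr_i]\notin\TT_u$ for all $i<j_0$) and split into two cases. If $\pp$ is already lower right $\TT$-bounded, then $\conb$ applied to $\TT$ gives that $\pp$ is upper right $\TT$-bounded, whence upper right $\TT_u$-bounded because $\TT_u\subseteq\TT$. Otherwise there exists some $i<\min(j_0,-N)$ with $[\pp,\rr_i]\in\TT$; combined with $[\pp,\rr_i]\notin\TT_u$, this forces $[\pp,\rr_i]$ not to cross $u$. By the observation, $[\pp,\ll_j]$ then fails to cross $u$ for every $j>N$, hence $[\pp,\ll_j]\notin\TT_u$ for all such $j$, which is precisely upper right $\TT_u$-boundedness. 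The second half of $\conb$ (upper left implies lower left) is obtained by exchanging ``right'' with ``left'' throughout and using the left end of $\BB$ in place of the right end.

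The main obstacle will be the geometric stabilization observation itself. The argument is not deep but is genuinely case-heavy: each combination of $\pp$-type and $u$-type must be verified against Lemma~4.2 of \cite{LP}, with attention to the degenerate sub-case where $\pp$ happens to be an endpoint of $u$ (in which case neither family crosses $u$, consistent with the conclusion). Once this observation is in hand, the reduction to $\conb$ for $\TT$ is a short logical step.
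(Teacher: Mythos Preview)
Your proposal is correct and follows essentially the same approach as the paper: both split on whether $\pp$ is lower right $\TT$-bounded, handle the bounded case via $\conb$ for $\TT$, and handle the unbounded case by observing that the crossing behavior of $[\pp,\rr_i]$ for $i\to-\infty$ and $[\pp,\ll_j]$ for $j\to+\infty$ eventually coincide. Your ``stabilization observation'' is a cleaner, more uniform packaging of this geometric fact (treating upper and lower $\pp$ at once), whereas the paper only writes out the case $\pp=\ll_p$ and deduces the constraint on $u$ directly rather than isolating the stabilization as a lemma.
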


\begin{proof}
	We shall only prove that if an upper marked point $\ll_p$ is lower right $\TT_u$-bounded, then it is upper right $\TT_u$-bounded.
	\begin{itemize}
		\item If $\ll_p$ is lower right $\TT$-bounded, then by $\conb$ for $\TT$, $\ll_p$ is upper right $\TT$-bounded. In particular, $\ll_p$ is upper right $\TT_u$-bounded.
		\item If $\ll_p$ is not lower right $\TT$-bounded, then $u$ is neither a connecting arc with an endpoint $\ll_q$ with $q>p$ nor an upper arc $[\ll_i,\ll_j]$ with $i<p<j$. Hence there are only finitely many arcs $[\ll_p,\ll_t]$ in $\TT$, with $t>p$ and crossing $u$. So $\ll_p$ is upper right $\TT_u$-bounded.
	\end{itemize}
\end{proof}

\begin{lemma}\label{lem:min}
Let $\TT$ be a set of arcs in $\BB$, satisfying $\conb$. For every arc $u$ and every marked point $\pp$, if $\TT_u\cap [\pp,-]$ is nonempty, then it contains a (unique) minimal element.
\end{lemma}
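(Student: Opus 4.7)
The plan is to read off the minimum by a case analysis on $\pp$ and on the ``type'' of the minimum, using $\conb$ to rule out unboundedness of the relevant index. Uniqueness of the minimum is automatic because $>_\pp$ is a linear order on $[\pp,-]$, so I focus only on existence. By Lemma~\ref{lem:easy}, $\TT_u$ inherits $\conb$ from $\TT$, so throughout the argument I may apply $\conb$ directly to $\TT_u$.

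First, suppose $\pp=\ll_p$. The defining chain of $>_\pp$ partitions $[\ll_p,-]$ into three $>_\pp$-consecutive blocks: the upper-right arcs $[\ll_p,\ll_f]$ with $f>p$ (largest), then the connecting arcs $[\ll_p,\rr_d]$, then the upper-left arcs $[\ll_p,\ll_b]$ with $b<p$ (smallest). The minimum of $\TT_u\cap[\ll_p,-]$ therefore lies in the $>_\pp$-smallest block that contributes any element, and within that block the problem reduces to maximising a single integer parameter. For the upper-left block, the constraint $b\le p-2$ (since $b$ is an integer with $[\ll_p,\ll_{p-1}]$ an edge) makes the maximum exist for free. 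For the connecting block, the assumption that the upper-left block is empty gives that $\ll_p$ is UL $\TT_u$-bounded, and then the ``UL bounded $\Rightarrow$ LL bounded'' half of $\conb$ bounds $d$ from above. For the upper-right block, the assumption that the other two blocks are both empty makes $\ll_p$ simultaneously UL, LL and LR $\TT_u$-bounded, whereupon the ``LR bounded $\Rightarrow$ UR bounded'' half of $\conb$ bounds $f$ from above. In each case the relevant integer set is nonempty (by hypothesis) and bounded above, so its maximum exists and yields the desired minimal arc.

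The case $\pp=\rr_p$ is handled by the symmetric argument: the three $>_\pp$-decreasing blocks are now lower-left arcs, connecting arcs, and lower-right arcs, and the same two implications of $\conb$ are invoked with the roles of top and bottom exchanged (``LR$\Rightarrow$UR'' handles the connecting block once the lower-right block is empty, and ``UL$\Rightarrow$LL'' handles the lower-left block once there are no connecting or lower-right arcs). The only real obstacle is bookkeeping: one must keep straight the dictionary between the ordinal position of an arc in the $>_\pp$-chain, the sign of its endpoint index, and which of the four $\TT_u$-boundedness conditions applies, so that each emptiness hypothesis on ``higher'' blocks triggers exactly the correct direction of $\conb$ needed to bound the parameter of the current block.
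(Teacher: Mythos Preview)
Your argument is correct and is essentially the same as the paper's proof: both invoke Lemma~\ref{lem:easy} to transfer $\conb$ to $\TT_u$, then do the three-block case analysis on the smallest nonempty $>_\pp$-block, using the two implications of $\conb$ in exactly the same places to bound the relevant index from above. The only difference is cosmetic: you spell out the uniqueness remark and the $\pp=\rr_p$ case, which the paper leaves implicit.
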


\def\LL{\TT_u\cap [\pp,-]}

\begin{proof}
We shall only consider the case that $\pp$ is an upper marked point, say $\pp=\ll_p$. By Lemma~\ref{lem:easy}, $\TT_u$ satisfies $\conb$.
\begin{itemize}
	\item If there is an upper arc $[\ll_p,\ll_a]\in\TT_u$ with $a<p$, we may take $a$ to be maximal with respective to this property. Then $[\ll_p,\ll_a]$ is the minimal element in $\LL$.
	\item If there are no such upper arcs, then $\ll_p$ is upper left $\TT_u$-bounded. By $\conb$, $\ll_p$ is lower left $\TT_u$-bounded.
	\begin{itemize}
		\item If there are connecting arcs in $\LL$, then there is a maximal integer $c$ such that $[\ll_p,\rr_c]\in\TT_u$. Then $[\ll_p,\rr_c]$ is the minimal element in $\LL$.
		\item If there are no connecting arcs in $\LL$, then $\ll_p$ is lower right $\TT_u$-bounded. By $\conb$, $\ll_p$ is upper right $\TT_u$-bounded. So there is a maximal integer $e>p$ such that $[\ll_p,\ll_e]\in\TT_u$. Then $[\ll_p,\ll_e]$ is the minimal element in $\LL$.
	\end{itemize}
	
\end{itemize}

\end{proof}

We are ready to prove that $\conb$ and $\conc$ form a sufficient condition for a Ptolemy diagram to be $\tau$-compact.


\begin{proof}[Proof of the 'if' part of Theorem~\ref{thm:cri}]
Suppose both $\conb$ and $\conc$ hold for $\TT$. We need to prove $\TT_u$ admits a finite $\tau$-basis, for every arc $u$ in $\BB$. Consider first the case that $u$ is an upper arc in $\BB$, say $u=[\ll_p,\ll_q]$ with $p<q$. Then any arc crossing $ u$ has an endpoint $\ll_m$ with $p<m<q$. So $\TT_{u}=\cup_{p<m<q}(\TT_{u}\cap[\ll_m,-])$. By Lemma~\ref{lem:min}, each nonempty $\TT_{u}\cap[\ll_m,-]$ contains a unique minimal element. Then all of such minimal elements form a finite basis of $\TT_u$.

The case $u$ is a lower arc is similar. Consider now the case that $u$ is a connecting arc, say $u=[\ll_p,\rr_q]$.
If there is a connecting arc in $\nc\TT$, say $[\ll_i,\rr_j]$, then any arc in $\TT_u$ has an endpoint, which is either an upper marked point between $\ll_p$ and $\ll_i$, or a lower marked point between $\rr_q$ and $\rr_j$. So there is a finite set $S$ of marked points in $\BB$ such that $\TT_u=\cup_{\pp\in S}\TT_u\cap[\pp,-]$. Then the set of minimal elements in $\TT_{u}\cap[\pp,-]$ for $\pp\in S$ is a finite $\tau$-basis of $\TT_u$.

If there are no connecting arcs in $\nc\TT$, then by $\conc$ there is a connecting arc in $\TT$. Note that $\TT_u=A_1\cup A_2$, where $A_1=\cup_{p'>p}\TT_u\cap[\ll_{p'},-]$ and $A_2=\cup_{q'<q}\TT_u\cap[\rr_{q'},-]$. It suffices to find finite subsets $Z_{i}\subseteq A_{i}$, $i=1,2$, such that $Z_{i}$ is a $\tau$-basis of $A_i$  (since then $Z_{1}\cup Z_{2}$ is a $\tau$-basis of $\TT_u$). We shall only find $Z_1$. By Lemma~\ref{lem:min}, each nonempty $\TT_u\cap[\ll_{p'},-]$ has a minimal element. We denote by $\qq(p')$ the other endpoint of the minimal element. Since $[\ll_{p'},\qq(p')]$ crosses $u$, we have $\qq(p')$ is an upper marked point left to $\ll_p$ or a lower marked point left to $\rr_q$. We claim that for any $p''>p'>p$, $[\ll_{p'},\qq(p')]$ and $[\ll_{p''},\qq(p'')]$ do not cross each other. Indeed, if they cross then by $\cona$ we have $[\ll_{p'},\qq(p'')]\in\TT_u$, which is smaller than $[\ll_{p'},\qq(p')]$, a contradiction. Let $Z_{1}$ be the set of minimal elements in $\TT_u\cap[\qq(p'),-]$ for $p'>p$. We claim that $Z_1\cup Z_2$ is a $\tau$-basis of $\TT_u$. In fact, suppose $u_1$ is an arc in $\TT_u$. Without lose of generality, we suppose $u_1=[\ll_{p'},\aa]$ is in $A_1$ for some marked point $\aa$. We have $[\ll_{p'},\qq(p')]\in\TT_u\cap[\qq(p'),-]$. Suppose the minimal element in $\TT_u\cap[\qq(p'),-]$ with $p'>p$ is $u_2=[\qq(p'),\qqq(p')]$ ), i.e, $u_2$ is in $Z_1$. If $\qq(p')=\aa$ or $\ll_{p'}=\qqq(p')$, then $u_1$ does not cross $u_2$, and $u_1$ crosses $\tau u_2$. So we consider the case $\qq(p')\neq\aa$ and $\ll_{p'}\neq\qqq(p')$. Then $u_1$ crosses $u_2$, $[\ll_{p'},\qq(p')]$ and $[\aa,\qqq(p')]$ are two middle terms from $u_2$ to $u_1$. Obviously, they are in $\TT_u$. Next we show that the set $\{\qq(p')\mid p'>p\}$ is finite, which implies that $Z_1$ is finite and we are done.
\begin{itemize}
	\item [(1)] If there is an integer $p'>p$ such that $\qq(p')$ is a lower marked point, then for any $p''>p'$, $\qq(p'')$ is a lower marked point between $\qq(p')$ and $\rr_q$. This is because $[\ll_{p'},\qq(p')]$ and $[\ll_{p''},\qq(p'')]$ do not cross each other.  Hence, $\{\qq(p')\mid p'>p\}$ is finite.
	\item [(2)] Consider now the case that all of $\qq(p')$ are upper marked points. Note that there exists an arc $v=[\ll_i,\rr_j]\in\TT$. If $v$ does not cross any upper arcs $[\ll_{p'},\qq(p')]$ with $p'>p$, then any $\qq(p')$ is between $\ll_i$ and $\ll_p$ and hence $\{\qq(p')\mid p'>p\}$ is finite. If $v$ crosses an upper arc $[\ll_{p'},\qq(p')]$ for some $p'>p$, by $\cona$, we have $[\qq(p'),\rr_j]\in\TT$ for some $p'>p$. Then by $\cona$ again, we have $[\ll_{p''},\qq(p')]\in\TT_u$ for any $p''>p'$, which implies that $\qq(p'')=\qq(p')$. So the set $\{\qq(p')\mid p'>p\}$ is also finite and we complete the proof the claim.
\end{itemize}

\end{proof}

The next result shows that $\conb$ is a necessary condition for a set of arcs (not necessarily a Ptolemy diagram)  to be $\tau$-compact.

\begin{proposition}\label{lem:5}
Any $\tau$-compact set $\TT$ of arcs in $\BB$ satisfies condition $\conb$.
\end{proposition}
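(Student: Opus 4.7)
The plan is to prove the contrapositive: if $\TT$ fails $\conb$ at some marked point, then $\TT$ is not $\tau$-compact. By the evident left--right and upper--lower symmetries of $\BB$, it suffices to treat a single sub-case; I take an upper marked point $\ll_p$ that is lower right $\TT$-bounded but not upper right $\TT$-bounded, so there exists $j_0$ with $[\ll_p,\rr_i]\notin\TT$ for every $i<j_0$, and a sequence $i_k\to+\infty$ with $[\ll_p,\ll_{i_k}]\in\TT$ for all $k$. The remaining three sub-cases of $\conb$ are handled by the same method after an analogous choice of the auxiliary arc.

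I will exhibit a single arc $u$ such that $\TT_u$ has no finite $\tau$-basis. Set $u=[\ll_{p+1},\rr_c]$ for any integer $c<j_0$. Then every $[\ll_p,\ll_{i_k}]$ with $i_k\ge p+2$ crosses $u$, so $\TT_u$ contains infinitely many of these arcs. Assuming for contradiction that a finite $\tau$-basis $\Sigma\subseteq\TT_u$ exists, pigeonhole produces a single $u_2\in\Sigma$ whose $\tau$-shift crosses $u_1:=[\ll_p,\ll_{i_k}]$ for infinitely many $i_k$, with the middle-term condition enforced whenever $u_2$ itself crosses $u_1$.

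The core is a short case analysis on the shape of $u_2$. Lower arcs never cross upper arcs, so they are eliminated at once. For an arc $u_2=[\ll_p,\qq]$ sharing the endpoint $\ll_p$ with $u_1$, the condition $u_2\in\TT_u$ combined with $c<j_0$ and the lower right $\TT$-boundedness of $\ll_p$ excludes the possibility $\qq=\rr_j$; the remaining option $\qq=\ll_j$ contributes only finitely many $i_k$ (namely $p+2\le i_k\le j$), by the crossing criterion applied to $\tau u_2$. For an upper arc $u_2=[\ll_a,\ll_b]$ with no shared endpoint, handling infinitely many $i_k$ together with $u_2\in\TT_u$ forces $a\le p-2$ and $b\ge p+2$; then for $i_k>b$ the arc $u_2$ crosses $u_1$, but one of the middle terms is $[\ll_a,\ll_p]$, which fails to cross $u=[\ll_{p+1},\rr_c]$ (an upper arc crosses this connecting arc iff its endpoints straddle $\ll_{p+1}$), contradicting the requirement that middle terms lie in $\TT_u$. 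For a connecting arc $u_2=[\ll_a,\rr_b]$, an analogous bookkeeping forces $a\ge p+2$ and $b>c$, and the middle term $[\ll_p,\rr_b]$ fails to cross $u$, since two connecting arcs $[\ll_p,\rr_b]$ and $[\ll_{p+1},\rr_c]$ cross only when $b<c$. Every case ends in a contradiction, so no finite $\tau$-basis can exist.

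The delicate point—and the real content—is the calibration of $u$. The inequality $c<j_0$ is precisely what forces every candidate middle term incident to $\ll_p$ (both $[\ll_a,\ll_p]$ and $[\ll_p,\rr_b]$) to fall outside $\TT_u$, and this is the exact place where the lower right boundedness of $\ll_p$ is used. Once $u$ is chosen, the individual crossing verifications reduce to routine applications of Liu--Paquette's crossing lemma, and the three remaining sub-cases of $\conb$ follow by the same argument with the auxiliary arc $u$ chosen analogously (for instance $u=[\ll_{j_0-1},\ll_{p+1}]$ in the case where $\ll_p$ is upper left $\TT$-bounded but not lower left $\TT$-bounded, taking $j_0\le p$ without loss of generality).
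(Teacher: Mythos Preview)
Your argument is correct and follows the same strategy as the paper: argue the contrapositive, choose the auxiliary connecting arc $u=[\ll_{p+1},\rr_s]$ (the paper takes $s=j_0$ rather than $c<j_0$, but either works), and locate a middle term that fails to lie in $\TT_u$. The paper's bookkeeping is slightly leaner: instead of pigeonholing on $u_2$ and case-splitting on its type, it fixes a single $u_1\in\TT_u\cap[\ll_p,-]$ that is smaller in the $\ll_p$-order than every element of $\Sigma\cap[\ll_p,-]$, invokes Lemma~\ref{lem:4} to force $u_1$ and $u_2$ to actually cross, and then uses the middle term $[\ll_r,\ll_q]$ at the \emph{far} endpoint $\ll_q$ of $u_1$ (rather than your middle terms at $\ll_p$).

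One small caution on your reduction: the individual left--right and upper--lower reflections of $\BB$ each send $\tau$ to $\tau^{-1}$, so they do not preserve $\tau$-compactness; only their composition (the $180^\circ$ rotation) does, and that pairs the four sub-cases of $\conb$ into two orbits rather than one. Thus symmetry alone does not reduce everything to a single case, and the paper indeed treats the ``upper left $\Rightarrow$ lower left'' case separately. Your fallback claim that the same method works with an analogous $u$ is correct, and your suggested choice $u=[\ll_{j_0-1},\ll_{p+1}]$ for that case does go through.
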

\begin{proof}
We shall only show that any upper marked point which is lower right $\TT$-bounded is upper right $\TT$-bounded, and any upper marked point which is upper left $\TT$-bounded is lower left $\TT$-bounded.
\begin{itemize}
  \item [(1)] Let $\ll_p$ be an upper marked point which is lower right $\TT$-bounded. Then there is an integer $s$ such that $[\ll_p,\rr_{s'}]\notin\TT$ for any $s'<s$. Let $u=[\ll_{p+1},\rr_s]$. So $\TT_u\cap[\ll_p,-]$ consists of the arcs $[\ll_p,\ll_t]\in\TT$ with $t>p+1$. If $\ll_p$ is not upper right $\TT$-bounded, then $\TT_u\cap[\ll_p,-]$ is infinite. Since $\TT$ is $\tau$-compact, $\TT_u$ admits a finite $\tau$-basis $\Sigma$. By the finiteness of $\Sigma$, there is an arc $u_1\in\TT_u\cap[\ll_p,-]$, which is smaller than any arc in $\Sigma\cap[\ll_p,-]$. On the other hand, denoting by $\ll_q$ the other endpoint of $u_1$, any arc in $[\ll_q,-]$ which is smaller than $u_1$ does not crosses $u$. Hence by Lemma~\ref{lem:4}, for any arc $u_2\in\Sigma$ satisfying $\tau u_2$ crosses $u_1$, we have that $u_1$ crosses $u_2$. So $u_2$ has an endpoint of the form $\ll_r$ with $p<r<q$. Then the arc $[\ll_r,\ll_q]$ is a middle term from $u_2$ to $u_1$. However, $[\ll_r,\ll_q]$ does not cross $u$. This contradicts that $\Sigma$ is a $\tau$-basis of $\TT_u$.

  \item [(2)] Let $\ll_p$ be an upper marked point which is upper left $\TT$-bounded. Let $s$ be the minimal integer such that $[\ll_p,\ll_s]$ is in $\TT$ or is an edge and let $u=[\ll_s,\rr_t]$ an arbitrary connecting arc having $\ll_s$ as an endpoint. So $\TT_u\cap[\ll_p,-]$ consists of the connecting arcs $[\ll_p,\rr_{t'}]\in\TT$ with $t'>t$.
  If $\ll_p$ is not lower left $\TT$-bounded, then $\TT_u\cap[\ll_p,-]$ is infinite. Since $\TT$ is $\tau$-compact, $\TT_u$ admits a finite $\tau$-basis  $\Sigma$. By the finiteness of $\Sigma$, there is an integer $m$ such that $\rr_n$ is not an endpoint of any arc in $\Sigma$ for any $n>m$. Then by the infiniteness of $\TT_u\cap[\ll_p,-]$, there exists a connecting arc $u_1=[\ll_p,\rr_q]\in\TT_u\cap[\ll_p,-]$ with $q>\max\{m,t\}$. It follows that any arc in $[\rr_q,-]$ or $[\ll_p,-]$, which is smaller than $u_1$, is not in $\Sigma$. Then by Lemma~\ref{lem:4} for any arc $u_2\in\Sigma$ with $\tau u_2$ crossing $u_1$, $u_1$ crosses $u_2$. Using the fact that any lower marked point left to $\rr_q$ is not an endpoint of any arc in $\Sigma$, we have that $u_2$ has an endpoint, which is an upper marked point $\ll_r$ with $r<p$. Then $[\ll_r,\ll_p]$ is a middle term from $u_2$ to $u_1$. But if $r\geq s$, $[\ll_r,\ll_p]$ does not cross $u$; if $r<s$, $[\ll_r,\ll_p]$ is not in $\TT$ by the minimality of $s$. Therefore $[\ll_r,\ll_p]\notin\TT_u$, which contradicts that $\Sigma$ is a $\tau$-basis of $\TT_u$.

\end{itemize}
\end{proof}

An upper marked point $\ll_p$ is said to be covered by an upper arc $[\ll_i,\ll_j]$ if $i<p<j$; and a lower marked point $\rr_q$ is said to be covered by a lower arc $[\rr_a,\rr_b]$ if $a>q>b$.
Now we can complete the proof of Theorem~\ref{thm:cri} by the following result.

\begin{proposition}\label{prop:com}
Any $\tau$-compact Ptolemy diagram $\TT$ of $\BB$ satisfies $\conc$.
\end{proposition}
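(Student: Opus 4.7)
The plan is to prove the contrapositive: assuming $\TT$ is a Ptolemy diagram with no connecting arc in $\TT \cup \nc\TT$, I will exhibit an arc $u$ whose $\TT_u$ admits no finite $\tau$-basis, contradicting $\tau$-compactness of $\TT$.

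First, since $\nc\TT$ contains no connecting arc, every connecting arc $[\ll_p,\rr_q]$ of $\BB$ is crossed by some arc of $\TT$; as $\TT$ itself contains no connecting arc, this crossing arc must be either an upper arc covering $\ll_p$ (meaning $[\ll_a,\ll_b]\in\TT$ with $a<p<b$) or a lower arc covering $\rr_q$. If both $\ll_{p_0}$ and $\rr_{q_0}$ failed to be covered in this sense, then the arc $[\ll_{p_0},\rr_{q_0}]$ would contradict the above; hence either every upper marked point admits an upper cover in $\TT$, or every lower marked point admits a lower cover in $\TT$. By the evident symmetry (interchanging upper and lower) I treat only the former case.

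Fix $u=[\ll_0,\rr_0]$ and suppose for contradiction $\TT_u$ admits a finite $\tau$-basis $\Sigma$; let $N$ bound the absolute values of all indices appearing as endpoints of arcs in $\Sigma$. The key claim is that $\TT$ contains an upper arc $v=[\ll_a,\ll_b]$ with $a<-N-1$ and $b>N+1$; such $v$ automatically belongs to $\TT_u$. Granting the claim, I obtain the contradiction as follows: for any lower arc $v'\in\Sigma$, $\tau v'$ is also a lower arc and therefore cannot cross the upper arc $v$; for any upper arc $v'=[\ll_\alpha,\ll_\beta]\in\Sigma$, we have $\tau v'=[\ll_{\alpha+1},\ll_{\beta+1}]$ with both endpoints in $[-N+1,N+1]$ and hence strictly inside $(a,b)$, so $\tau v'$ is nested inside $v$ and does not cross it. Thus no $v'\in\Sigma$ satisfies that $\tau v'$ crosses $v$, contradicting the $\tau$-basis property applied to $v\in\TT_u$.

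The main obstacle is constructing $v$. I begin with any upper cover $v_1=[\ll_{a_1},\ll_{b_1}]\in\TT$ of $\ll_{-N-2}$, so $a_1<-N-2<b_1$; if already $b_1>N+1$ we are done, otherwise iterate. Given $v_k=[\ll_{a_k},\ll_{b_k}]\in\TT$ with $a_k<-N-2$ and $-N-2<b_k\leq N+1$, choose an upper cover $w=[\ll_{a'},\ll_{b'}]\in\TT$ of $\ll_{b_k}$, so $a'<b_k<b'$. If $a_k<a'$ then $v_k$ and $w$ cross (with $a_k<a'<b_k<b'$), and $\cona$ yields $[\ll_{a_k},\ll_{b'}]\in\TT$ (which is genuinely an arc since $b'-a_k\geq 3$); set $v_{k+1}=[\ll_{a_k},\ll_{b'}]$. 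Otherwise $a'\leq a_k<-N-2$ and we set $v_{k+1}=w$. In both cases the right endpoint strictly increases while the left endpoint remains $<-N-2$; since the right endpoint is an integer increasing from a value $>-N-2$ and we stop as soon as it exceeds $N+1$, the procedure terminates in finitely many steps, producing the desired arc $v$.
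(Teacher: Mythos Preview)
Your proof is correct and follows essentially the same route as the paper's. Both reduce to the case where every upper marked point is covered by an upper arc in $\TT$, then use $\cona$ to produce an arbitrarily wide upper cover of a fixed point; the paper phrases this last step as a maximality contradiction on $\min\{|p-i|,|p-j|\}$ while you run an explicit iteration, but the content is the same. The only structural difference is that the paper assumes merely that $\nc\TT$ lacks connecting arcs and concludes that $\Sigma$ (hence $\TT$) must contain one, whereas you assume from the outset that $\TT$ also lacks connecting arcs and derive a direct contradiction to the $\tau$-basis property; these are logically equivalent formulations.
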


\begin{proof}
	
To show that $\TT$ satisfies $\conc$ is equivalent to proving that if $\nc\TT$ does not contain any connecting arcs, then $\TT$ contains connecting arcs. Then either every upper marked point is covered by an upper arc in $\TT$, or every lower marked point is covered by a lower arc in $\TT$. Without loss of generality, we assume that the former occurs. Let $\ll_p$ be an upper marked point and $u$ an arbitrary connecting arc having $\ll_p$ as an endpoint. Since $\TT$ is $\tau$-compact, $\TT_u$ admits a finite basis $\Sigma$. By the finiteness of $\Sigma$, there are integers $m<p<n$ such that for any $m'< m$ and $n'> n$ there are no upper arcs in $\Sigma$ which has $\ll_{m'}$ or $\ll_{n'}$ as an endpoint. Assume that there exists an upper arc $[\ll_i,\ll_j]\in\TT$ with $i<p<j$ such that $\min\{|p-i|,|p-j|\}$ is maximal. Since $\ll_i$ is an upper marked point, there is an upper arc $[\ll_a,\ll_b]\in\TT$ with $a<i<b$. By the maximality of $\min\{|p-i|,|p-j|\}$, we have that $b<j$. By $\cona$ we have $[\ll_a,\ll_j]$ is in $\TT$. However $[\ll_a,\ll_j]$ covers $\ll_p$ and $\min\{|p-a|,|p-j|\}>\min\{|p-i|,|p-j|\}$, a contradiction. Hence there are integers $m'< m$ and $n'> n$ such that $[\ll_{m'},\ll_{n'}]\in\TT$ covering $\ll_p$. So $[\ll_{m'},\ll_{n'}]$ is in $\TT_u$ and it does not cross any upper arcs in $\tau\Sigma$. Therefore, there are connecting arcs in $\Sigma$. As $\Sigma$ is a subset of $\TT$, it follows that $\TT$ contains connecting arcs.
\end{proof}

\subsection{A crucial property of compact Ptolemy diagrams}

For any set $\TT$ of arcs in $\BB$, denote by $\bTT$ the set obtained from $\TT$ by adding all edges in $\BB$.

\begin{lemma}\label{a}
	Let $\TT$ be a Ptolemy diagram and let $\pp$ be a marked point in $\BB$. For any two elements $[\pp,\ii]>_\pp[\pp,\jj]$ in $\bTT\cap[\pp,-]$, if there is no $[\pp,\qq]$ in $\bTT\cap[\pp,-]$ with $[\pp,\ii]>_\pp[\pp,\qq]>_\pp[\pp,\jj]$, then $[\ii,\jj]$ is in $\overline{\nc\TT}$.
\end{lemma}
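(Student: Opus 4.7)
The plan is to proceed by contradiction. Suppose $[\ii,\jj]\notin\overline{\nc\TT}$; then $[\ii,\jj]$ is an arc (not an edge) that crosses some $v=[\aa,\mathfrak{b}]\in\TT$. The goal is to exhibit an element of $\bTT\cap[\pp,-]$ strictly between $[\pp,\ii]$ and $[\pp,\jj]$ in the order $<_\pp$, contradicting the adjacency hypothesis. The key geometric observation is this: the three arcs $[\pp,\ii]$, $[\pp,\jj]$, $[\ii,\jj]$ bound a topological triangle $T$ whose open interior contains no marked points (marked points lie on the strip boundary, whereas the three bounding arcs sit inside the open strip away from their endpoints). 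Consequently, for a marked point $\qq\notin\{\pp,\ii,\jj\}$, the arc $[\pp,\qq]$ is strictly between $[\pp,\ii]$ and $[\pp,\jj]$ in $<_\pp$ precisely when its initial direction at $\pp$ enters $T$; and such a path must then exit $T$ through $[\ii,\jj]$, i.e.\ $[\pp,\qq]$ crosses $[\ii,\jj]$. The two edges at $\pp$ are the extrema of $<_\pp$ and so never lie strictly between any pair.

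If $\pp\in\{\aa,\mathfrak{b}\}$, then $v\in\TT\subseteq\bTT$ is itself an arc from $\pp$ crossing $[\ii,\jj]$, and the key observation instantly places $v$ strictly between $[\pp,\ii]$ and $[\pp,\jj]$, a contradiction. Otherwise $\pp\notin\{\aa,\mathfrak{b}\}$; after relabeling, take $\mathfrak{b}$ to be the endpoint of $v$ on the side of $[\ii,\jj]$ opposite to $\pp$. A geometric subclaim states that $v$ must cross at least one of $[\pp,\ii]$, $[\pp,\jj]$. Indeed, the $\pp$-side of $[\ii,\jj]$ is partitioned by $[\pp,\ii]$ and $[\pp,\jj]$ into the triangle $T$ together with two further regions, each bounded inside the open strip by one of these two arcs; the endpoint $\aa$ is a marked point on the $\pp$-side distinct from $\pp,\ii,\jj$, so it must lie in one of these two outer regions (no marked point is in $T$), and since arcs meet the strip boundary only at their endpoints, $v$ can leave that region only by crossing the bounding arc.

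Hence the crossed arc is a genuine arc in $\TT$, and applying the Ptolemy condition $\cona$ to the resulting crossing pair yields $[\pp,\mathfrak{b}]\in\TT$ whenever $[\pp,\mathfrak{b}]$ is an arc. The edge alternative is excluded: if $\pp$ and $\mathfrak{b}$ were adjacent on the strip boundary, then the arc $[\ii,\jj]$, lying in the open strip, could not separate them, contradicting that $\mathfrak{b}$ lies opposite to $\pp$ across $[\ii,\jj]$. So $[\pp,\mathfrak{b}]\in\TT\subseteq\bTT$ is an arc crossing $[\ii,\jj]$, and by the key observation it lies strictly between $[\pp,\ii]$ and $[\pp,\jj]$, the desired contradiction. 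The main obstacle is the geometric subclaim in the second case, which rests on a careful analysis of the regions into which $[\pp,\ii]$, $[\pp,\jj]$, $[\ii,\jj]$ partition the strip, together with the facts that marked points live on the strip boundary and that arcs avoid this boundary except at their endpoints. Once the subclaim is settled, the algebraic step via $\cona$ is immediate.
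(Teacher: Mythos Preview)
Your argument is correct and follows essentially the same approach as the paper's own proof: assume an arc $v\in\TT$ crosses $[\ii,\jj]$, let $\qq$ (your $\mathfrak{b}$) be its endpoint on the side of $[\ii,\jj]$ opposite to $\pp$, and produce $[\pp,\qq]\in\bTT$ strictly between $[\pp,\ii]$ and $[\pp,\jj]$, either directly (when $\pp$ is an endpoint of $v$) or via $\cona$ after observing that $v$ must cross one of $[\pp,\ii]$, $[\pp,\jj]$. Your write-up is simply more explicit about the geometric subclaim and the exclusion of the edge case for $[\pp,\mathfrak{b}]$, both of which the paper leaves implicit.
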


\begin{proof}
	If $[\ii,\jj]$ is an arc but not in $\nc\TT$, then there is an arc $u$ in $\TT$ crossing $[\ii,\jj]$. Note that the arc $[\ii,\jj]$ divides the infinite strip $\BB$ into two regions. Let $\qq$ be the endpoint of $u$ that is in the different region from $\pp$. It follows that $[\pp,\ii]>_\pp [\pp,\qq]>_\pp[\pp,\jj]$. So $\pp$ is not an endpoint of $u$. But this implies that $u$ crosses one of $[\pp,\ii]$ and $[\pp,\jj]$. By $\cona$, we have $[\pp,\qq]\in\TT$, a contradiction.
\end{proof}

Let $u$ be an arc in $\BB$ and $\pp$ an endpoint of $u$. Denote by $[\pp,-]_{>_\pp u}$ the subset of $[\pp,-]$ consisting of the elements bigger than $u$.

\begin{lemma}\label{lem:relmin}
	Let $\TT$ be a set of arcs in $\BB$, satisfying $\conb$. Then there is a minimal element in $\bTT\cap[\pp,-]_{>_\pp u}$ for any arc $u$ in $\BB$, where $\pp$ is an endpoint of $u$.
\end{lemma}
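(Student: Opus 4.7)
The plan is to establish Lemma~\ref{lem:relmin} by a case analysis on the type of $u$, walking upward in the order $>_\pp$ starting just above $u$ and identifying the first element of $\bTT$ one meets. By the top--bottom symmetry of $\BB$ and of $\conb$, it suffices to treat $\pp=\ll_p$; since $u$ is an arc (hence not an edge), its other endpoint is either $\rr_s$ for some integer $s$, or $\ll_t$ with $|t-p|\geq 2$. Note that the edges $[\ll_p,\ll_{p-1}]$ and $[\ll_p,\ll_{p+1}]$ both belong to $\bTT$ and serve as sentinels at the extremes of $[\ll_p,-]$.

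If $u=[\ll_p,\ll_t]$ with $t\geq p+2$, then $\bTT\cap[\pp,-]_{>_\pp u}\subseteq\{[\ll_p,\ll_{t'}]\mid p<t'<t\}$ is a finite nonempty set (it contains the edge $[\ll_p,\ll_{p+1}]$), so its minimum---the largest such $t'$ with $[\ll_p,\ll_{t'}]\in\bTT$---exists by finiteness alone. If $u=[\ll_p,\rr_s]$, I would first look at the lower arcs above $u$: the set $\{s'<s\mid[\ll_p,\rr_{s'}]\in\TT\}$ is automatically bounded above by $s-1$, so if nonempty its maximum provides the minimum. If it is empty, then $\ll_p$ is lower right $\TT$-bounded, and $\conb$ forces $\ll_p$ to be upper right $\TT$-bounded; hence $\{t'>p\mid[\ll_p,\ll_{t'}]\in\bTT\}$ is a finite nonempty subset (it always contains $p+1$) whose maximum gives the minimum.

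The most delicate situation is $u=[\ll_p,\ll_t]$ with $t\leq p-2$, because going upward in $>_\pp$ from $u$ one passes successively through the left-upper, the lower, and the right-upper segments of $[\ll_p,-]$, each unbounded in the direction of travel and without the help of any edge. I would check them in turn. If some $[\ll_p,\ll_{t'}]\in\TT$ has $t'<t$, then this set is bounded above by $t-1$, and its maximum yields the minimum. Otherwise $\ll_p$ is upper left $\TT$-bounded, so by $\conb$ it is lower left $\TT$-bounded; then $\{c\mid[\ll_p,\rr_c]\in\TT\}$ is bounded above, and if nonempty its maximum gives the minimum. If that set too is empty, then $\ll_p$ has no lower arcs in $\TT$, hence is trivially lower right $\TT$-bounded; a second application of $\conb$ bounds $\{t'>p\mid[\ll_p,\ll_{t'}]\in\TT\}$ above, and the argument concludes exactly as in Case~2.

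The main obstacle is precisely this Case~3 climb: without $\conb$, the next segment one enters after the current one fails (i.e.\ contains no element of $\bTT$ above $u$) could itself be unbounded in its natural index, and the minimum would be ``lost at infinity''. Condition $\conb$ is tailored so that emptiness on one corner of $\ll_p$ transfers to boundedness on the adjacent corner, forcing each contributing segment to be bounded above and hence to admit a maximum. The dual case $\pp=\rr_p$ follows by interchanging the roles of $\ll$ and $\rr$ and of the two clauses of $\conb$.
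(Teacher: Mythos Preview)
Your proof is correct and follows essentially the same idea as the paper's. The paper argues by contrapositive in one sentence (if $\bTT\cap[\pp,-]_{>_\pp u}$ has no minimum then $\pp$ is upper left but not lower left $\TT$-bounded, or lower right but not upper right $\TT$-bounded, contradicting $\conb$); your case analysis simply unpacks this, walking through the three segments of $[\ll_p,-]$ and using $\conb$ at exactly the two transition points, so the approaches coincide. One small terminological slip: in your Case~2 you write ``lower arcs above $u$'' for the connecting arcs $[\ll_p,\rr_{s'}]$; otherwise the argument is clean and more explicit than the paper's.
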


\begin{proof}
	If there is not a minimal element in $[\pp,-]_{>_\pp u}$, then one of the following situations occurs.
	\begin{enumerate}
		\item $\pp$ is upper left $\TT$-bounded but is not lower left $\TT$-bounded.
		\item $\pp$ is lower right $\TT$-bounded but is not upper right $\TT$-bounded.
	\end{enumerate}
	This contradicts condition~$\conb$.
\end{proof}

We shall need the following lemma.

\begin{lemma}\label{lem:8}
Let $\TT$ be a $\tau$-compact Ptolemy diagram of $\BB$. If there is a connecting arc $u$ in $\nc\TT$, which is not in $\TT$, then there is another connecting arc in $\nc\TT$ crossing $u$.
\end{lemma}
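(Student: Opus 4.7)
The plan is to exploit the adjacency structure of $\bTT$ around $u$ at its two endpoints. Write $u=[\ll_p,\rr_q]$. Since $\TT$ is a $\tau$-compact Ptolemy diagram, Theorem~\ref{thm:cri} gives $\conb$ and $\conc$ for $\TT$. I apply Lemma~\ref{lem:relmin} at $\ll_p$ to obtain the minimal element $u_1=[\ll_p,\ii]$ of the elements of $\bTT\cap[\ll_p,-]$ that lie above $u$ in $>_{\ll_p}$; this set is non-empty because $[\ll_p,\ll_{p+1}]\in\bTT$ is above $u$. When a maximum $u_2=[\ll_p,\jj]$ below $u$ likewise exists, Lemma~\ref{a} applied to the consecutive pair $(u_1,u_2)$ in $\bTT\cap[\ll_p,-]$ gives $[\ii,\jj]\in\overline{\nc\TT}$, and the placement $u_1>_{\ll_p}u>_{\ll_p}u_2$ puts $\ii,\jj$ on opposite sides of $u$, so $[\ii,\jj]$ is an arc crossing $u$, hence lies in $\nc\TT$.

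I split on the type of $[\ii,\jj]$. If one of $\ii,\jj$ is an upper marked point and the other is a lower marked point, then $[\ii,\jj]$ is a connecting arc and we take $v=[\ii,\jj]$. Otherwise $\ii,\jj$ are both upper or both lower, and I rerun the construction from the endpoint $\rr_q$, producing another arc $[\ii',\jj']\in\nc\TT$ crossing $u$; if it is connecting, we are done.

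The obstacles that form the technical heart are twofold. First, the maximum $u_2$ below $u$ need not exist under $\tau$-compactness alone: for instance, if $\ll_p$ is upper-left $\TT$-unbounded (a situation that $\conb$ does not forbid) then the elements of $\bTT\cap[\ll_p,-]$ below $u$ have no maximum. Second, even when both adjacency constructions succeed, they may both yield non-connecting arcs. To finish, I plan to combine $\conc$ (which locates a connecting arc in $\TT\cup\nc\TT$) with the Ptolemy axiom $\cona$: for a candidate connecting arc $v=[\ll_a,\rr_b]$ crossing $u$, any $\TT$-arc $w$ that crosses $v$ can, via $\cona$, be replaced by arcs sharing endpoints with $v$, and iterating this sliding moves $v$ until nothing in $\TT$ crosses it. Termination of the sliding should follow from the finite $\tau$-basis of $\TT_v$ provided by $\tau$-compactness.
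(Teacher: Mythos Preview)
Your approach via consecutive neighbours in $\bTT$ at a \emph{single} endpoint runs into exactly the two obstacles you name, and the sliding repair you sketch is neither well-specified nor clearly terminating: you have not said which endpoint of the candidate $v$ to move, in which direction, or why a finite $\tau$-basis of $\TT_v$ controls the number of moves when $v$ itself changes at every step.

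The paper sidesteps both obstacles with one move. Rather than seeking the maximal neighbour \emph{below} $u$ at $\ll_p$ (whose existence $\conb$ does not guarantee), it takes the minimal neighbour \emph{above} $u$ at \emph{each} of the two endpoints: let $[\ll_p,\pp]$ be minimal in $\bTT\cap[\ll_p,-]_{>_{\ll_p}u}$ and $[\rr_q,\qq]$ be minimal in $\bTT\cap[\rr_q,-]_{>_{\rr_q}u}$. Both exist by Lemma~\ref{lem:relmin}. Since $\pp$ lies on one side of $u$ and $\qq$ on the other, $[\pp,\qq]$ is an arc crossing $u$. One then checks $[\pp,\qq]\in\nc\TT$ directly: if some $v=[\ii,\jj]\in\TT$ crossed $[\pp,\qq]$, then $v$ could not have $\ll_p$ or $\rr_q$ as an endpoint (using $u\notin\TT$ together with the two minimalities), so $v$ must cross one of $[\ll_p,\pp]$, $[\rr_q,\qq]$; since $v$ does not cross $u\in\nc\TT$, applying $\cona$ produces an element of $\TT$ strictly between $u$ and the corresponding minimal neighbour, contradicting minimality. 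No Lemma~\ref{a}, no $\conc$, and no iteration are required.

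A side remark on your second obstacle: the arc $[\pp,\qq]$ produced in this way is not always literally a connecting arc (e.g.\ for $\TT=\{[\ll_p,\rr_{q-5}]\}$ one gets $\pp=\rr_{q-5}$ and $\qq=\rr_{q+1}$). The paper's write-up is terse on this point, but in the sole application of the lemma, the proof of Proposition~\ref{prop:1}, only an arc of $\nc\TT$ crossing $u$ is actually used.
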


\begin{proof}
Let $\ll_p$ and $\rr_q$ be the two endpoints of $u$. Using Lemma~\ref{lem:relmin}, there are minimal elements $[\ll_p,\pp]$ and $[\rr_q,\qq]$ in $\bTT\cap[\ll_p,-]_{> u}$ and $\bTT\cap[\rr_q,-]_{> u}$, respectively. It is clear that the arc $[\pp,\qq]$ crosses $u$. To complete the proof, we only need to prove $[\pp,\qq]$ is in $\nc\TT$. Indeed, if there is an arc $v=[\ii,\jj]\in\TT$ crossing $[\pp,\qq]$, then neither $\ll_p$ nor $\rr_q$ is an endpoint of $v$ by $u\notin \TT$ and the minimality of $[\ll_p,\pp]$ and $[\rr_q,\qq]$. So $[\ii,\jj]$ crosses either $[\ll_p,\pp]$ or $[\rr_q,\qq]$. Without loss of generality, we assume that $[\ii,\jj]$ crosses $[\ll_p,\pp]$. Since $[\ii,\jj]$ does not cross $u$, both $[\ll_p,\ii]$ and $[\ll_p,\jj]$ are bigger than $u$. By $\cona$, both $[\ll_p,\ii]$ and $[\ll_p,\jj]$ are in $\TT$. But we have either $[\ll_p,\ii]>_{\ll_p}[\ll_p,\pp]>_{\ll_p}[\ll_p,\jj]$ or $[\ll_p,\jj]>_{\ll_p}[\ll_p,\pp]>_{\ll_p}[\ll_p,\ii]$. Both cases contradict the minimality of $[\ll_p,\pp]$.
\end{proof}

We have the following important property of a $\tau$-compact Ptolemy diagram.

\begin{proposition}\label{prop:1}
Any $\tau$-compact Ptolemy diagram $\TT$ satisfies $\TT = \nc\nc\TT$.
\end{proposition}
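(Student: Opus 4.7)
The containment $\TT \subseteq \nc\nc\TT$ is immediate from the definition of $\nc$, so the content of the proposition is the reverse inclusion: every arc $u = [\pp_0, \qq_0]$ not in $\TT$ must be crossed by some arc in $\nc\TT$.

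My plan is to apply Lemma~\ref{a}. At a suitably chosen marked point $\pp$, if I can find two adjacent elements $[\pp, \ii] >_\pp [\pp, \jj]$ of $\bTT \cap [\pp, -]$ (i.e., no element of $\bTT \cap [\pp, -]$ lies strictly between them in $<_\pp$) with $\ii, \jj$ on opposite sides of $u$ in $\BB$, then Lemma~\ref{a} puts $[\ii, \jj]$ in $\overline{\nc\TT}$; opposite-sidedness forces $[\ii, \jj]$ to be an arc rather than an edge and to cross $u$, so $[\ii, \jj] \in \nc\TT$ is what we want. The natural first attempt is $\pp = \pp_0$. The two boundary edges at $\pp_0$ always lie in $\bTT$, and because $u$ is an arc rather than an edge, these edges sit strictly above and strictly below $u$ in $<_{\pp_0}$, so $\bTT \cap [\pp_0, -]$ meets both half-sets. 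By $\conb$ (which holds by Theorem~\ref{thm:cri}) and Lemma~\ref{lem:relmin}, the minimum $[\pp_0, \pp'_+]$ of the elements of $\bTT \cap [\pp_0, -]$ strictly above $u$ exists; if there is also a maximum $[\pp_0, \pp'_-]$ of the elements strictly below $u$, this pair is adjacent in $\bTT \cap [\pp_0, -]$ (only $u$ itself, which is not in $\bTT$, falls between them), and Lemma~\ref{a} delivers $[\pp'_+, \pp'_-] \in \overline{\nc\TT}$. At an endpoint of $u$, the directions ``above $u$'' and ``below $u$'' in $<_{\pp_0}$ correspond to opposite sides of $u$ in $\BB$, so $\pp'_+$ and $\pp'_-$ lie on opposite sides, and $[\pp'_+, \pp'_-]$ is the required arc.

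The main obstacle is the case when no such maximum below $u$ exists at $\pp_0$ --- under $\conb$ alone this can occur when $\pp_0$ has an infinite family of $\TT$-arcs accumulating at $u$ from below in $<_{\pp_0}$. I would then repeat the construction at the other endpoint $\qq_0$, and via a case analysis on the type of $u$ (upper, lower, connecting) argue that the failure rarely occurs simultaneously at both endpoints. When it does, I plan to invoke $\conc$ to extract a connecting arc in $\TT \cup \nc\TT$ and use it together with $u$ to carve out a region of $\BB$, selecting an interior (non-endpoint) marked point $\pp$ at which $\bTT \cap [\pp, -]$ does exhibit the needed adjacency transition across $u$. This final step is the main technical hurdle, and it is where the full strength of the $\tau$-compact Ptolemy hypothesis is used.
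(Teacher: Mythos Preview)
For upper and lower arcs your approach is exactly the paper's. At the correct endpoint---$\ll_q$ for an upper arc $u=[\ll_p,\ll_q]$ with $p<q$, dually for lower arcs---the elements of $\bTT\cap[\pp_0,-]$ strictly below $u$ form a finite set, so your ``max below'' always exists; your pair $(\pp'_-,\pp'_+)$ is then precisely the paper's $(\ll_s,\pp)$.

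The gap is the connecting-arc case. Simultaneous failure at both endpoints is not rare: with $\TT=\{[\ll_p,\ll_b]:b<p\}\cup\{[\rr_q,\rr_a]:a<q\}$ (a $\tau$-compact Ptolemy diagram, by direct verification of $\cona$, $\conb$, $\conc$) and $u=[\ll_p,\rr_q]\notin\TT$, the max-below fails at both $\ll_p$ and $\rr_q$. Your fallback (``carve out a region, choose an interior marked point'') is only a sketch, and it is not clear how Lemma~\ref{a} at a non-endpoint could be arranged to output an arc crossing $u$. The paper takes a different route and never uses interior points. The missing device is Lemma~\ref{lem:8}: for a connecting arc $v\in\nc\TT\setminus\TT$ one takes the minimum above $v$ at \emph{each} endpoint (both exist by Lemma~\ref{lem:relmin}); the two resulting neighbour points lie on opposite sides of $v$, so the arc joining them crosses $v$, and one checks it lies in $\nc\TT$---this bypasses the missing maximum entirely. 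With this in hand the paper (i) combines Lemma~\ref{lem:8}, $\conc$, and a closest-arc argument to show that $\TT$ must contain a connecting arc, then (ii) picks $[\ll_m,\rr_n]\in\TT$ minimizing $|m-p|+|n-q|$ and, via a short case analysis again using Lemmas~\ref{lem:relmin} and~\ref{a} at endpoints of $u$ or of $[\ll_m,\rr_n]$, forces $m=p$ and $n=q$.
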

\begin{proof}
	
The inclusion $\TT\subseteq\nc\nc\TT$ is clear. So it suffices to show that any arc $u$ in $\nc\nc\TT$ is in $\TT$.
By Theorem~\ref{thm:cri}, $\TT$ satisfies $\conb$ and $\conc$.

Consider first the case that $u$ is an upper arc, say $u=[\ll_p, \ll_q]$ with $p<q$. Let $s>p$ be the minimal integer such that $[\ll_q,\ll_s]\in\bTT$. So $s\leq q-1$. By Lemma~\ref{lem:relmin} there is a minimal element in $\bTT\cap[\ll_q,-]_{>_{\ll_q} [\ll_q,\ll_s]}$, say $[\ll_q,\pp]$. By the minimality of $s$, $\pp$ is not an upper marked point $\ll_t$ with $p< t\leq q$. Hence if $\pp\neq\ll_p$ then $[\pp,\ll_s]$ crosses $u$. This is a contradiction because by Lemma~\ref{a} $[\pp,\ll_s]\in\nc\TT$. The proof in case $u$ is a lower arc is similar.
	
Consider now the last case that $u$ is a connecting arc, say $u=[\ll_p, \rr_q]$. We claim that $\TT$ contains a connecting arc. Indeed, if $\TT$ does not contain any connecting arc, then by $\conc$ there are connecting arcs in $\nc\TT$. Let $v=[\ll_i,\rr_j]$ be a connecting arc in $\nc\TT$ such that $|p-i|+|q-j|$ is minimal. By Lemma~\ref{lem:8}, there is an arc $w\in\nc\TT$ crossing $v$. It follows that $v\neq u$ and one of the endpoints of $w$, say $\pp$, is either an upper marked point between $\ll_p$ and $\ll_i$ with not equaling $\ll_i$, or a lower marked point between $\rr_q$ and $\rr_j$ with not equaling $\rr_j$. By Lemma~\ref{lem:9}, $\nc\TT$ satisfies $\cona$. So both $[\ll_i,\pp]$ and $[\rr_j,\pp]$ are in $\nc\TT$. But one of them is a connecting arc, which is nearer to $u$ than $v$, a contradiction. Thus, there are connecting arcs in $\TT$.
	
Let $[\ll_m,\rr_n]$ be a connecting arc in $\TT$ with $|m-p|+|n-q|$ minimal. We need to prove $|m-p|+|n-q|=0$.
\begin{enumerate}
	\item If $n<q$, then there is a minimal integer $r$ such that $q>r\geq n$ and $[\rr_q,\rr_r]\in\bTT$. Using Lemma~\ref{lem:relmin} there is a minimal element $[\rr_q,\pp]$ in $\TT\cap[\rr_q,-]_{>_{\rr_q}[\rr_q,\rr_r]}$. By the minimality of $r$, we have $[\rr_q,\pp]>_{\rr_q}[\rr_q,\rr_n]$. By Lemma~\ref{a}, $[\rr_r,\pp]$ is in $\overline{\nc\TT}$. It follows that $[\rr_r,\pp]$ does not cross $[\ll_p,\rr_q]$. Hence $\pp$ is either an upper marked point $\ll_a$ with $a\geq p$ or a lower marked point $\rr_b$ with $b<n$. If $[\rr_q,\pp]$ crosses $[\ll_m,\rr_n]$, then by $\cona$, we have $[\ll_m,\rr_q]\in\TT$ with $|m-p|+|q-q|<|m-p|+|n-q|$, a contradiction. If $[\rr_q,\pp]$ does not cross $[\ll_m,\rr_n]$, then $\pp=\ll_a$ with $a\leq m$. It follows that we have $[\ll_a,\rr_q]\in\TT$ with $|a-p|+|q-q|<|m-p|+|n-q|$, a contradiction.
	\item The case $m<p$ can be proved similarly as (1).
	\item If $n>q$ and $m\geq p$, by Lemma~\ref{lem:relmin}, there is a minimal element $[\ll_m,\pp]$ in $\TT\cap[\ll_m,-]_{>[\ll_m,\rr_n]}$. Then by Lemma~\ref{a}, $[\rr_n,\pp]$ is in $\nc\TT$. It follows that $\pp=\rr_b$ for some $n>b\geq q$. So we have $[\ll_m,\rr_b]\in\TT$ with $|m-p|+|b-q|<|m-p|+|n-q|$, a contradiction.
	\item The case that $n\geq q$ and $m>p$ can be proved similarly as (3).
\end{enumerate}
\end{proof}

\section{Geometric realization of cotorsion pairs}\label{sec:gr}


We shall use the following lemmas to prove the main result in the paper.

\begin{lemma}\label{lem:easy2}
For any two arcs $u_1$ and $u_2$ in $\BB$ sharing an endpoint $\pp$, we have that $u_2\geq_\pp u_1$ if and only if $\Hom(M_{u_1},M_{u_2})\neq 0.$
\end{lemma}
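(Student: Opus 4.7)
The strategy is to convert the $\Hom$-nonvanishing condition into a crossing condition via Serre duality and then verify a purely combinatorial equivalence by inspection.

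First, since $\CC$ is $2$-CY we have
\[\Hom_\CC(M_{u_1},M_{u_2})\simeq D\Hom_\CC(M_{u_2},M_{u_1}[2]).\]
Iterating Proposition~\ref{lem1}(2) gives $M_{u_1}[2]=M_{\tau u_1}[1]$, so the right-hand side equals $D\Ext^1_\CC(M_{u_2},M_{\tau u_1})$. By Proposition~\ref{lem1}(1) this is nonzero if and only if $u_2$ and $\tau u_1$ form a crossing pair. Hence the lemma reduces to the purely combinatorial claim: for arcs $u_1,u_2$ sharing an endpoint $\pp$, $u_2\geq_\pp u_1$ if and only if $u_2$ crosses $\tau u_1$.

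To prove this combinatorial claim, I would argue by cases on whether $\pp$ is an upper or a lower marked point, and on the positions of the other endpoints $\qq_1,\qq_2$ of $u_1,u_2$ relative to $\pp$ (upper-left, upper-right, or lower). In each case one writes $\tau u_1=[\tau\pp,\tau\qq_1]$ explicitly, applies the crossing criterion of \cite[Lemma~4.2]{LP} to the pair $(u_2,\tau u_1)$, and compares the resulting inequality on indices with the explicit description of $\geq_\pp$ preceding Figure~\ref{fig:order}. Because $\tau$ acts on marked points as the uniform shift $\ll_i\mapsto\ll_{i+1}$ and $\rr_i\mapsto\rr_{i+1}$, the two resulting sets of integer inequalities agree after a trivial reindexing; for instance, when $\pp=\ll_p$ and $u_1=[\ll_p,\ll_a]$, $u_2=[\ll_p,\ll_b]$ with $a,b>p$, the crossing criterion gives that $u_2$ crosses $\tau u_1=[\ll_{p+1},\ll_{a+1}]$ precisely when $p<b\leq a$, which is exactly the condition $u_2\geq_\pp u_1$.

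The main obstacle is organizing the case analysis, which is routine but has several sub-cases, together with the boundary configurations where $\tau u_1$ shares an endpoint with $u_2$ (so the two do not cross). Because $u_1,u_2$ are arcs and not edges, $\tau\pp\neq\qq_2$ and $\tau\qq_1\neq\pp$, so such sharing can occur only when $\tau\qq_1=\qq_2$; in every such configuration a direct check shows $u_1>_\pp u_2$, so both sides of the desired equivalence fail and the claim is preserved. Modulo this bookkeeping, each individual case reduces to comparing a pair of integer inequalities, and the equivalence holds on the nose.
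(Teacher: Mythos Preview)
Your proof is correct and follows essentially the paper's approach: reduce $\Hom(M_{u_1},M_{u_2})\neq 0$ to the combinatorial statement that $u_2$ crosses $\tau u_1$, then check that this is equivalent to $u_2\geq_\pp u_1$. One minor simplification over your reduction: instead of invoking $2$-CY duality, one can use directly that $\Ext^1_\CC(M_{\tau u_1},M_{u_2})=\Hom_\CC(M_{u_1}[1],M_{u_2}[1])\cong\Hom_\CC(M_{u_1},M_{u_2})$, since $[1]$ is an autoequivalence.
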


\begin{proof}
It is easy to see that $u_2\geq_\pp u_1$ if and only if $u_2$ crosses $\tau u_1$. By Lemma~\ref{lem1}, the lemma follows.
\end{proof}

\begin{lemma}\label{lem:fac}
Let $v_1$ and $v_2$ be two arcs in $\BB$ with $v_2\geq_\pp v_1$, for some marked point $\pp$. Then for any arc $u$ with $\tau u$ crossing $v_1$, any morphism from $M_u$ to $M_{v_2}$ factors through an arbitrary morphism from $M_{v_1}$ to $M_{v_2}$.
\end{lemma}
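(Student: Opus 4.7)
The plan is to reduce the factorization to two claims: that the relevant $\Hom$-spaces are at most one-dimensional, and that the composition of non-zero generators $h_0 \in \Hom(M_u, M_{v_1})$ and $g_0 \in \Hom(M_{v_1}, M_{v_2})$ is non-zero. Once these hold, any $f \in \Hom(M_u, M_{v_2})$ and any non-zero $g \in \Hom(M_{v_1}, M_{v_2})$ are scalar multiples of the respective generators, and an appropriate scalar multiple of $h_0$ serves as $h$ with $g h = f$.

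First I would establish non-vanishing of the three $\Hom$-spaces. That $\Hom(M_{v_1}, M_{v_2}) \neq 0$ is immediate from Lemma~\ref{lem:easy2} and $v_2 \geq_\pp v_1$. Using Proposition~\ref{lem1} together with the identification $M_u[1] = M_{\tau u}$,
\[
\Hom(M_u, M_{v_1}) \cong \Hom(M_u[1], M_{v_1}[1]) = \Ext^1(M_{\tau u}, M_{v_1}) \neq 0
\]
by the hypothesis that $\tau u$ crosses $v_1$. The case $f = 0$ is trivial, so assume $f \neq 0$; the same identification then forces $\tau u$ to cross $v_2$, hence $\Hom(M_u, M_{v_2}) \neq 0$.

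Next I would appeal to the standard fact for cluster categories of type $A_\infty^\infty$, visible from the Liu--Paquette geometric model \cite{LP} and mirroring the analogous result for representations of type $A$ quivers, that $\Hom$-spaces between indecomposables in $\CC$ are at most one-dimensional. In particular the three spaces above are each exactly one-dimensional, with chosen generators $h_0, g_0, f_0$, and one may write $g_0 h_0 = \lambda f_0$ for some $\lambda \in \k$; everything then reduces to proving $\lambda \neq 0$.

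The core step, which I expect to be the main obstacle, is showing that $g_0 h_0 \neq 0$. Here I would do a case analysis via Lemma~\ref{lem:4} applied to $\tau u$ crossing $v_1$: either $u$ shares an endpoint $\pp'$ with $v_1$ with $v_1 \geq_{\pp'} u$, or $u$ crosses $v_1$. In each case, combining with the shared endpoint $\pp$ of $v_1, v_2$ and the order $v_2 \geq_\pp v_1$ pins down the geometric configuration, and one can lift the generators $h_0, g_0$ to explicit string-module homomorphisms in $D^b(\rep Q)$ whose composition is non-zero because the relevant endpoints match up consistently. Since the canonical functor $D^b(\rep Q) \to \CC$ does not kill this composition, $g_0 h_0 \neq 0$ in $\CC$, and combining with the scalar manipulation above yields the required $h$ with $g h = f$.
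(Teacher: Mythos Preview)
Your high-level strategy---reduce to one-dimensionality of $\Hom$-spaces plus non-vanishing of the composite $g_0h_0$---is exactly what the paper does. The divergence is in how the non-vanishing is established. The paper observes that since $v_1$ and $v_2$ share the endpoint $\pp$, they do not cross, so $\Hom(M_{v_1},M_{v_2}[1])=0$ by Proposition~\ref{lem1}; it then invokes the dual of \cite[Lemma~5.6]{LP}, which says precisely that under this vanishing hypothesis the composite of non-zero maps $M_u\to M_{v_1}\to M_{v_2}$ generates $\Hom(M_u,M_{v_2})$. This is a one-line citation rather than a case analysis.

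Your proposed route through lifting to $D^b(\rep Q)$ is in principle workable but is where the real difficulty hides, and the sketch you give does not address it. A non-zero morphism $M_u\to M_{v_1}$ in $\CC$ lives in $\bigoplus_n\Hom_{D^b}(M_u,(\tau^{-1}[1])^nM_{v_1})$; the generator need not come from the $n=0$ summand, so ``lift to string-module homomorphisms'' is not automatic, and the composition in $\CC$ is computed only after applying the appropriate power of $\tau^{-1}[1]$ to one of the lifts. Sorting out which summand each generator comes from and checking that the resulting composite in $D^b$ is non-zero is essentially re-proving \cite[Lemma~5.6]{LP} by hand for each geometric configuration. The cleaner move is to notice $\Ext^1_\CC(M_{v_1},M_{v_2})=0$ and quote that lemma directly.
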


\begin{proof}
Let $M=M_u$, $N=M_{v_1}$ and $L=M_{v_2}$. Since $\tau u$ crosses $v_1$ and $\tau v_1$ crosses $v_2$, there are nonzero morphisms $f:M\to N$ and $g:N\to L$. Moreover, we have $\Hom(N,L[1])=0$ because $v_1$ does not cross $v_2$. Using the dual of \cite[Lemma~5.6]{LP}, $\Hom(M_u,M_{v_2})$ is generated by $gf$. In particular, any map from $M_u$ to $M_{v_2}$ factors through an arbitrary morphism from $M_{v_1}$ to $M_{v_2}$.
\end{proof}

\begin{lemma}\label{lem:3}
Let $\Y$ be a subcategory of $\CC$ and $u$ an arc in $\BB$. Then there is a left $\Y$-approximation of $M_u$ if and only if $\YY_{\tau u}$ admits a finite $\tau$-basis.
\end{lemma}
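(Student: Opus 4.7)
The plan is to derive the equivalence from Lemma~\ref{lem:fac} together with the one-dimensionality of nonzero Hom-spaces between indecomposables in $\CC$ (built into Lemma~\ref{lem:fac} through the ``generated by $gf$'' statement of \cite[Lemma~5.6]{LP}). I will freely use the dictionary $v\in\YY_{\tau u}$ iff $M_v\in\Y$ and $\Hom_{\CC}(M_u,M_v)\ne 0$ (this last being the same as $v$ crossing $\tau u$ via Proposition~\ref{lem1}).

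For the ``if'' direction, given a finite $\tau$-basis $\Sigma=\{v_1,\dots,v_n\}$ of $\YY_{\tau u}$, I fix a nonzero $f_i\colon M_u\to M_{v_i}$ for each $i$ and assemble $f=(f_i)\colon M_u\to Y=\bigoplus_i M_{v_i}\in\Y$. To check that $f$ is a left $\Y$-approximation I reduce to a single nonzero morphism $g\colon M_u\to M_{u_1}$ with $u_1\in\Y$ (so $u_1\in\YY_{\tau u}$), and invoke the $\tau$-basis property to get $v=v_i\in\Sigma$ with $\tau v$ crossing $u_1$ and, if $v$ crosses $u_1$, both middle arcs from $v$ to $u_1$ lying in $\YY_{\tau u}$. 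When $v$ and $u_1$ share an endpoint, Lemma~\ref{lem:fac} factors $g$ through any nonzero $M_v\to M_{u_1}$, and by one-dimensionality the resulting map $M_u\to M_v$ is a scalar multiple of $f_i$. When $v$ crosses $u_1$, I pick a middle arc $w$ with shared endpoints $\pp_1$ (with $v$) and $\pp_2$ (with $u_1$), and apply Lemma~\ref{lem:fac} successively to the pairs $(w,u_1)$ at $\pp_2$ and $(v,w)$ at $\pp_1$ to write $g=\beta\alpha\phi$ with $\phi\colon M_u\to M_v$; one-dimensionality forces $\phi=cf_i$, so $g$ factors through $f_i$, hence through $f$.

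For the ``only if'' direction, given a left $\Y$-approximation $f\colon M_u\to Y$, I Krull--Schmidt decompose $Y=\bigoplus_i M_{v_i}$, discard summands with $f_i=0$, and set $\Sigma:=\{v_i\}$, which is a finite subset of $\YY_{\tau u}$. For any $u_1\in\YY_{\tau u}$, factoring a nonzero $g\colon M_u\to M_{u_1}$ through $f$ as $g=\sum h_if_i$ exhibits some $v=v_i\in\Sigma$ with $h_i\ne 0$, so $\Hom_{\CC}(M_v,M_{u_1})\ne 0$, i.e., $\tau v$ crosses $u_1$. When $v$ crosses $u_1$, Lemma~\ref{lem:fac} applied to the pair $(v,w)$ at the endpoint shared with $v$ (using $\tau u$ crosses $v$) produces a nonzero composition $M_u\to M_v\to M_w$, so $\Hom_{\CC}(M_u,M_w)\ne 0$ and $w$ crosses $\tau u$; this gives half of $w\in\YY_{\tau u}$.

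The heart of the argument, and the step I anticipate as the main obstacle, is the remaining half $M_w\in\Y$ for the middle arcs. My plan is to exploit the exchange triangle $M_v\to M_w\oplus M_{w'}\to M_{u_1}\to M_v[1]$ arising from the crossing pair $(v,u_1)$: since $M_v,M_{u_1}\in\Y$, combining the factorization properties forced by the approximation $f$ applied to the induced morphism $M_u\to M_w\oplus M_{w'}$ with the one-dimensionality of Hom should display $M_w\oplus M_{w'}$ as a direct summand of a $\Y$-object, after which closure of $\Y$ under direct summands (built into our conventions) yields $M_w,M_{w'}\in\Y$. This Ptolemy-style closure is precisely what the approximation data encodes, and verifying it cleanly is the most delicate combinatorial-categorical point of the proof.
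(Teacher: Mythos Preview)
Your ``if'' direction and the bulk of your ``only if'' direction coincide with the paper's proof: the paper also takes a (minimal) left $\Y$-approximation $f\colon M_u\to\bigoplus_{v\in\Sigma}M_v$, picks $u_2\in\Sigma$ with $h_{u_2}f_{u_2}\neq 0$ so that $\tau u_2$ crosses $u_1$, and then shows that any middle term $u_3$ from $u_2$ to $u_1$ crosses $\tau u$ by observing that otherwise $h_{u_2}$ would factor through $M_{u_3}$ (Lemma~\ref{lem:fac}) and hence $h_{u_2}f_{u_2}=0$. Your constructive version of this step (compose $M_u\to M_v\to M_w$) is the contrapositive of the same argument.

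The place where your plan breaks down is the very step you flag as delicate: showing $M_w\in\Y$. The exchange triangle $M_v\to M_w\oplus M_{w'}\to M_{u_1}\to M_v[1]$ exhibits $M_w\oplus M_{w'}$ as an \emph{extension} of objects of $\Y$, not as a direct summand of one, and in this paper subcategories are only assumed closed under finite direct sums and summands, not under extensions. There is no mechanism, from the approximation data alone, to realise $M_w\oplus M_{w'}$ as a summand of an object of $\Y$; one-dimensionality of Hom-spaces does not help here. So the exchange-triangle route cannot close the argument for an arbitrary $\Y$. Note that the paper's own proof stops after establishing that $u_3$ crosses $\tau u$ and does not separately verify $u_3\in\YY$; in the only application (Theorem~\ref{maintheorem}) the relevant $\Y$ is the second half of a cotorsion pair and hence extension-closed, in which case your triangle argument \emph{would} go through. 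If you want the lemma exactly as stated, you will need a different idea for this last step; if you only need it for Theorem~\ref{maintheorem}, adding the hypothesis that $\Y$ is extension-closed makes your plan work.
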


\begin{proof}

To prove the `if' part, let $\Sigma$ be a finite $\tau$-basis of $\YY_{\tau u}$. For each arc $v\in\Sigma$, there is a non-zero morphism $f_v:M_u\to M_v$ since $v$ crosses $\tau u$. We claim that $f=\oplus_{v\in\Sigma}f_v:M_u\to \oplus_{v\in\Sigma}M_v$ is a left $\Y$-approximation of $M_u$. Indeed, for any non-zero morphism $g:M_u\to M_{u_1}$ where $u_1$ is an arc in $\YY$, we have $\tau u$ crosses $u_1$. So $u_1\in\YY_{\tau u}$ and hence there is an arc $u_2\in\Sigma$ such that $\tau u_2$ crosses $u_1$ and any middle term from $u_2$ to $u_1$ is in $\YY_{\tau u}$. It suffices to prove that $g$ factors through $f_{u_2}$. By Lemma~\ref{lem:4}, there are the following two cases.
\begin{enumerate}
	\item $u_1\geq_{\pp} u_2$ for some marked point $\pp$. By Lemma~\ref{lem:fac}, we have that $g$ factors through $f_{u_2}$.
	\item $u_1$ crosses $u_2$. Then by definition, there exists a middle term $u_3$ with $u_2<_{\pp_1}u_3<_{\pp_2}u_1$ for some marked point $\pp_1$ and $\pp_2$. Since $u_3\in\YY_{\tau u}$, using Lemma~\ref{lem:fac} repeatedly, we have that $g$ factors through $f_{u_2}$.
\end{enumerate}

We now prove the `only if' part. Let $f:M_u\to X$ be a minimal left $\Y$-approximation. We may write $X=\oplus_{v\in\Sigma}M_v$, where $\Sigma$ is a finite set of arcs in $\BB$. It follows that $\Sigma$ is a subset of $\YY_{\tau u}$. Let $u_1$ be an arc in $\YY_{\tau u}$. Then for any nonzero morphism $g:M_u\to M_{u_1}$, there is a morphism $h:\oplus_{v\in\Sigma}M_v\to M_{u_1}$ such that $g=h\circ f$. Denote by $f_v:M_u\to M_v$ (resp. $h_v:M_v\to M_{u_1}$) the restricting of $f$ (resp. $h$) to $M_v$. Since $h\circ f\neq 0$, we have $f_{u_2}\neq 0$ and $h_{u_2}\neq 0$ for some $u_2\in\Sigma$. So by Proposition~\ref{lem1} $\tau u_2$ crosses $u_1$. Let $u_3$ be a middle term from $u_2$ to $u_1$. We need to show $u_3\in\YY_{\tau u}$, which completes the proof. Indeed, if $u_3$ does not cross $\tau u$, then $\Hom(M_u,M_{u_3})=0$. But on the other hand, by Lemma~\ref{lem:easy2} $\Hom(M_{u_2},M_{u_3})\neq0$ and $\Hom(M_{u_3},M_{u_1})\neq 0$. By Lemma~\ref{lem:fac} it follows that $h_{u_2}:M_{u_2}\to M_{u_1}$ factors through $M_{u_3}$. So $h_{u_2}\circ f_{u_2}=0$, which is a contradiction.
\end{proof}



The main result of this paper is the following classification of cotorsion pairs in $\CC$ by compact Ptolemy diagrams of $\BB$.

\begin{theorem}\label{maintheorem}
Let $\X$, $\Y$ be subcategories of $\mathscr{C}$, and $\widetilde{\X}$ and $\widetilde{\Y}$ the corresponding sets of arcs in $\BB$, respectively. Then the following statements are equivalent.
\begin{enumerate}
\item $(\X,\Y)$ is a cotorsion pair in $\mathscr{C}$.
\item $\widetilde{\X}$ is a $\tau^{-1}$-compact Ptolemy diagram of $\BB$ and $\widetilde{\Y}=\nc\widetilde{\X}$.
\item $\widetilde{\Y}$ is a $\tau$-compact Ptolemy diagram of $\BB$ and  $\widetilde{\X}=\nc\widetilde{\Y}$.
\item $\widetilde{\X}$ satisfies conditions $\cona$, $\conbp$ and $\conc$ and $\widetilde{\Y}=\nc\widetilde{\X}$.
\item $\widetilde{\Y}$ satisfies conditions $\cona$, $\conb$ and $\conc$ and $\widetilde{\X}=\nc\widetilde{\Y}$.
\end{enumerate}
\end{theorem}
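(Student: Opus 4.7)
The plan is to prove $(1) \Leftrightarrow (3)$ directly and then deduce $(1) \Leftrightarrow (2)$ by a symmetric argument swapping $\X$ with $\Y$, $\tau$ with $\tau^{-1}$, and left approximations with right ones. The remaining equivalences $(3) \Leftrightarrow (5)$ and $(2) \Leftrightarrow (4)$ are immediate from Theorem~\ref{thm:cri} and its dual, since $\cona$ is the definition of a Ptolemy diagram while $\conb + \conc$ (respectively $\conbp + \conc$) is exactly the criterion for $\tau$-compactness (respectively $\tau^{-1}$-compactness) of a Ptolemy diagram.

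The whole argument rests on two translations between $\CC$ and $\BB$. First, combining Proposition~\ref{lem1} with the identity $M_v = M_{\tau^{-1} v}[1]$, one obtains $\Hom_{\CC}(M_u, M_v) \neq 0 \iff (u, \tau^{-1} v)$ is a crossing pair, so the orthogonality relations $\X^\perp = \Y[1]$ and $^\perp(\Y[1]) = \X$ translate precisely to $\YY = \nc\XX$ and $\XX = \nc\YY$, respectively. Second, Lemma~\ref{lem:3} equates the existence of a left $\Y$-approximation of $M_u$ with the existence of a finite $\tau$-basis of $\YY_{\tau u}$; since $\tau$ is a bijection on arcs, and since in a Hom-finite Krull-Schmidt category covariant finiteness reduces to the indecomposable case, $\YY$ is $\tau$-compact if and only if $\Y$ is covariantly finite in $\CC$.

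For $(1) \Rightarrow (3)$, Proposition~\ref{prop:def}(2) turns the cotorsion pair $(\X,\Y)$ into the torsion pair $(\X, \Y[1])$, from which the dictionary above yields $\XX = \nc\YY$ and $\YY = \nc\XX$; the latter makes $\YY$ a Ptolemy diagram automatically by Lemma~\ref{lem:9}. To obtain $\tau$-compactness, the decomposition $\CC = \X \ast \Y[1]$ supplies, for every indecomposable $M_u$, a triangle $X \to M_u \to Y[1] \to X[1]$; applying $\Hom_{\CC}(-, W[1])$ for $W \in \Y$ and using $\Hom_{\CC}(X, W[1]) = \Ext^1_{\CC}(X, W) = 0$ shows that the map $M_u \to Y[1]$ is a left $\Y[1]$-approximation. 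Hence $\Y[1]$, and therefore $\Y$, is covariantly finite, and the second translation delivers $\tau$-compactness of $\YY$.

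For $(3) \Rightarrow (1)$, the crucial input is Proposition~\ref{prop:1}: from the hypothesis $\XX = \nc\YY$ it yields $\YY = \nc\nc\YY = \nc\XX$, restoring the symmetric perpendicularity. Translating back gives $\X^\perp = \Y[1]$ and $^\perp(\Y[1]) = \X$; the $\tau$-compactness of $\YY$, via the dictionary, makes $\Y[1]$ covariantly finite; Proposition~\ref{prop:def}(1) then certifies that $(\X, \Y[1])$ is a torsion pair, which Proposition~\ref{prop:def}(2) converts into the cotorsion pair $(\X, \Y)$. The main technical hurdle is precisely this appeal to Proposition~\ref{prop:1}: without the non-trivial identity $\nc\nc\YY = \YY$, the one-sided combinatorial hypothesis $\XX = \nc\YY$ would be too weak to deliver $\X^\perp = \Y[1]$, and hence the full torsion-pair axioms; everything else amounts to routine bookkeeping with the two dictionary items and Lemma~\ref{lem:9}.
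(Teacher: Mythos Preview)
Your proof is correct and takes essentially the same approach as the paper: the paper's own proof simply cites Proposition~\ref{prop:def}, Proposition~\ref{prop:1}, Theorem~\ref{thm:cri} and Lemma~\ref{lem:3} for the equivalences among (1), (3), (5), and appeals to duality for the rest. You have spelled out explicitly the two translation steps (orthogonality $\leftrightarrow$ $\nc$, covariant finiteness $\leftrightarrow$ $\tau$-compactness) and the role of Proposition~\ref{prop:1} in the direction $(3)\Rightarrow(1)$, which is exactly the intended logic behind the paper's one-line proof.
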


\begin{proof}
The equivalences between (1), (3) and (5) follows directly from Proposition~\ref{prop:def}, Proposition~\ref{prop:1}, Theorem~\ref{thm:cri} and Lemma~\ref{lem:3}. The other equivalences can be proved dually.
\end{proof}

\section{Applications}

\subsection{Classification of functorially finite rigid subcategories and cluster tilting subcategories in \texorpdfstring{$\CC$}{C}}

A {\em partial triangulation} of $\BB$ is a collection of non-crossing arcs in $\BB$; and a triangulation of $\BB$ is a maximal collection of non-crossing arcs in $\BB$. Clearly, any (partial) triangulation satisfies $\cona$, and hence it is a Ptolemy diagram.

\begin{definition}[Definition~4.9 and Definition~4.11 in \cite{LP}]
A (partial) triangulation $\TT$ of $\BB$ is called compact if for every arc $u\in\BB$, $\TT_u$ admits a finite subset $\Sigma$ such that every arc in $\TT_u$ crosses some arc of $\tau\Sigma$ as well as some arc of $\tau^{-1}\Sigma$.
\end{definition}

This compactness is compatible with ours in the following sense.

\begin{lemma}\label{lem:n1}
	A (partial) triangulation of $\BB$ is compact if and only if it is both $\tau$-compact and $\tau^{-1}$-compact as a Ptolemy diagram.
\end{lemma}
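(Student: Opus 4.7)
The plan is to show that both notions of compactness collapse to essentially the same condition as soon as no two arcs of the set cross each other, which is exactly the situation of a (partial) triangulation. Concretely, I would first observe that if $\TT$ is a (partial) triangulation then for any arc $u$ the set $\TT_u \subseteq \TT$ consists of pairwise non-crossing arcs. Consequently, for any $u_1,u_2 \in \TT_u$ the arc $u_2$ never crosses $u_1$, so the clause ``any middle term from $u_2$ to $u_1$ is in $\Omega$ when $u_2$ crosses $u_1$'' in the definition of a $\tau$-basis is vacuous. Hence a subset $\Sigma \subseteq \TT_u$ is a $\tau$-basis of $\TT_u$ if and only if for every $u_1 \in \TT_u$ there is some $u_2 \in \Sigma$ with $\tau u_2$ crossing $u_1$; the analogous reduction holds for $\tau^{-1}$-bases.

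For the ``only if'' direction I would take the finite set $\Sigma \subseteq \TT_u$ given by Liu--Paquette's definition: every arc of $\TT_u$ crosses some element of $\tau\Sigma$ and some element of $\tau^{-1}\Sigma$. By the reduction above, $\Sigma$ is simultaneously a finite $\tau$-basis and a finite $\tau^{-1}$-basis of $\TT_u$, so $\TT$ is both $\tau$-compact and $\tau^{-1}$-compact as a Ptolemy diagram.

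For the ``if'' direction, assume $\TT$ is both $\tau$-compact and $\tau^{-1}$-compact. Given an arc $u$, pick a finite $\tau$-basis $\Sigma_1$ and a finite $\tau^{-1}$-basis $\Sigma_2$ of $\TT_u$, and set $\Sigma := \Sigma_1 \cup \Sigma_2 \subseteq \TT_u$. Then $\Sigma$ is finite, every arc of $\TT_u$ is crossed by some element of $\tau\Sigma_1 \subseteq \tau\Sigma$, and also by some element of $\tau^{-1}\Sigma_2 \subseteq \tau^{-1}\Sigma$. This verifies Liu--Paquette's compactness condition for $u$, and since $u$ was arbitrary, $\TT$ is compact in their sense.

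There is essentially no obstacle here; the only thing to check carefully is that the extra ``middle term'' requirement in the definition of a $\tau$-basis is genuinely vacuous for subsets of a (partial) triangulation, which is immediate from the non-crossing property. The rest is a matter of taking unions of two finite witnesses, or reusing the single witness, to pass between a symmetric one-set formulation and two one-sided formulations.
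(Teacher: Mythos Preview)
Your proof is correct and follows essentially the same approach as the paper, whose proof is the single sentence ``This follows directly from Lemma~\ref{lem:4}.'' The key observation in both is that for a (partial) triangulation the clause about middle terms in the definition of a $\tau$-basis is vacuous, so a finite $\tau$-basis of $\TT_u$ is exactly a finite $\Sigma\subseteq\TT_u$ such that every arc of $\TT_u$ crosses some arc of $\tau\Sigma$; you establish this directly from the non-crossing property of $\TT$, while the paper points to Lemma~\ref{lem:4} for the same conclusion, and the union-of-witnesses step for the converse is routine in either telling.
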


\begin{proof}
This follows directly from Lemma~\ref{lem:4}.
\end{proof}

By Proposition~\ref{prop:def}, $\X$ is functorially finite rigid if and only if $(\X, \X[-1]^\bot)$ and $(^\bot\X[1],\X)$ are cotorsion pairs. Thus, we have the following classification of functorially finite rigid subcategories of $\CC$ .

\begin{proposition}
	Let $\X$ be a subcategory of  $\mathscr{C}$. Then the following statements are equivalent.
	\begin{enumerate}
		\item The subcategory $\X$ is functorially finite rigid.
		\item $\XX$ is a compact partial triangulation of $\BB$.
		\item $\XX$ is a partial triangulation satisfying $\conb$, $\conbp$ and $\conc$.
	\end{enumerate}
\end{proposition}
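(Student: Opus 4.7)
The plan is to chain together the tools already built in the paper. The reformulation of functorial finite rigidity in terms of two cotorsion pairs, noted immediately before the proposition, lets us apply Theorem~\ref{maintheorem} twice, and Theorem~\ref{thm:cri} (together with its dual) then unpacks the compactness into the axioms $\conb$, $\conbp$, $\conc$.

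For $(1)\Leftrightarrow(2)$: by Proposition~\ref{prop:def}(2),(4), $\X$ is functorially finite rigid if and only if both $(\X,\X[-1]^\bot)$ and $({}^\bot\X[1],\X)$ are cotorsion pairs in $\CC$. Applying Theorem~\ref{maintheorem}, the first cotorsion pair condition is equivalent to $\XX$ being a $\tau^{-1}$-compact Ptolemy diagram of $\BB$, and the second to $\XX$ being a $\tau$-compact Ptolemy diagram of $\BB$; the complementary parts $\X[-1]^\bot$ and ${}^\bot\X[1]$ correspond in both cases to $\nc\XX$, as is built into the statement of the main theorem. Rigidity $\Ext^1(\X,\X)=0$ is equivalent, via Proposition~\ref{lem1}(1), to $\XX$ having no crossing pairs, i.e.\ $\XX$ being a partial triangulation; any partial triangulation automatically satisfies $\cona$. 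By Lemma~\ref{lem:n1}, being simultaneously $\tau$- and $\tau^{-1}$-compact coincides with the Liu-Paquette notion of compactness on partial triangulations, yielding $(1)\Leftrightarrow(2)$.

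For $(2)\Leftrightarrow(3)$: since $\XX$ is a partial triangulation, $\cona$ is vacuous. By Theorem~\ref{thm:cri} a Ptolemy diagram is $\tau$-compact if and only if it satisfies $\conb$ and $\conc$; dually, it is $\tau^{-1}$-compact if and only if it satisfies $\conbp$ and $\conc$. Applying Lemma~\ref{lem:n1} then gives the equivalence between compactness of the partial triangulation and the conjunction of $\conb$, $\conbp$ and $\conc$. No step presents a substantive obstacle; the whole argument is a concatenation of previously established equivalences, and the only bookkeeping needed is to notice that both cotorsion pair assignments $\X\mapsto\X[-1]^\bot$ and $\X\mapsto{}^\bot\X[1]$ correspond on the combinatorial side to $\nc\XX$, which is already part of Theorem~\ref{maintheorem}.
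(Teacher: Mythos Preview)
Your proof is correct and follows essentially the same route as the paper's: both reformulate functorial finite rigidity via the two cotorsion pairs $(\X,\X[-1]^\bot)$ and $({}^\bot\X[1],\X)$, invoke Theorem~\ref{maintheorem} and Lemma~\ref{lem:n1}, and use Proposition~\ref{lem1} to identify rigidity with the partial-triangulation condition. Your write-up is simply more explicit, in particular spelling out the $(2)\Leftrightarrow(3)$ step via Theorem~\ref{thm:cri} and its dual, which the paper leaves implicit.
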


\begin{proof}
By Proposition~\ref{lem1}, for a subcategory $\X$ of $\CC$, $\X$ is rigid if and only if $\XX$ is a partial triangulation of $\BB$. Then this proposition follows by Theorem~\ref{maintheorem} and Lemma~\ref{lem:n1}
\end{proof}

As a direct consequence, we classify cluster tilting subcategories; compare \cite[Theorem 5.7]{LP}. Note that by \cite{ZZ}, the cluster tilting categories of $\CC$ are the functorially finite maximal rigid subcategories. 




\begin{corollary}[Theorem 5.7 in \cite{LP}]
	Let $\X$ be a subcategory of $\mathscr{C}$. Then the following statements are equivalent.
	\begin{enumerate}
		\item The subcategory $\X$ is cluster tilting.
		\item $\XX$ is a compact triangulation of $\BB$.
        \item $\XX$ is a triangulation of $\BB$ containing connecting arcs, and every marked point in $\BB$ which is upper left (resp. right) $\XX$-bounded is also lower left (resp. right) $\XX$-bounded and vice versa.
	\end{enumerate}
\end{corollary}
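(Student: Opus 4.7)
The approach is to specialize the main classification theorem to the diagonal case $\Y=\X$, exploiting Proposition~\ref{prop:def}(5): $\X$ is cluster tilting if and only if $(\X,\X)$ is a cotorsion pair. I would then apply Theorem~\ref{maintheorem} with $\widetilde{\Y}=\widetilde{\X}$ and translate the resulting conditions on $\widetilde{\X}$ into the geometric language of the corollary.

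For the equivalence $(1)\Leftrightarrow (2)$: Combining clauses (2) and (3) of Theorem~\ref{maintheorem}, the pair $(\X,\X)$ is a cotorsion pair iff $\widetilde{\X}$ is both a $\tau$-compact and a $\tau^{-1}$-compact Ptolemy diagram of $\BB$ with $\widetilde{\X}=\nc\widetilde{\X}$. The key observation is that the fixed-point equation $\widetilde{\X}=\nc\widetilde{\X}$ is equivalent to $\widetilde{\X}$ being a triangulation: any two arcs in $\widetilde{\X}$ must be non-crossing (otherwise one of them would fail to lie in $\nc\widetilde{\X}$), and any arc outside $\widetilde{\X}$ must cross some arc of $\widetilde{\X}$ (otherwise it would lie in $\nc\widetilde{\X}=\widetilde{\X}$), i.e.\ maximality holds. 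Conversely any triangulation vacuously satisfies $\cona$ (it contains no crossing pair) and automatically satisfies $\widetilde{\X}=\nc\widetilde{\X}$, so it is a Ptolemy diagram with this fixed-point property. Lemma~\ref{lem:n1} then identifies "$\tau$- and $\tau^{-1}$-compact" with "compact" in the sense of Liu--Paquette, yielding (2).

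For the equivalence $(2)\Leftrightarrow (3)$: By Theorem~\ref{thm:cri} together with its dual noted after the statement, a Ptolemy diagram is $\tau$-compact iff it satisfies $\conb$ and $\conc$, and $\tau^{-1}$-compact iff it satisfies $\conbp$ and $\conc$. For a triangulation we have $\widetilde{\X}=\nc\widetilde{\X}$, so the hypothesis "$\widetilde{\X}\cup\nc\widetilde{\X}$ contains connecting arcs" in $\conc$ collapses to the clean statement that $\widetilde{\X}$ itself contains connecting arcs. The conjunction of $\conb$ and $\conbp$ is precisely the four-fold symmetric equivalence: upper left (resp.\ right) $\widetilde{\X}$-boundedness of a marked point is equivalent to its lower left (resp.\ right) $\widetilde{\X}$-boundedness, which is the remaining condition of (3).

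The main obstacle is essentially bookkeeping rather than substantive: one must check carefully that the fixed-point condition $\widetilde{\X}=\nc\widetilde{\X}$ really characterizes triangulations among sets of arcs (including the non-trivial direction that maximality follows), and that $\cona$ is vacuous for triangulations so that the definition of $\tau$-compactness for Ptolemy diagrams applies without extra verification. With these in hand, the equivalences of the corollary follow from the main theorem and the criterion of Theorem~\ref{thm:cri} by a direct unfolding of definitions.
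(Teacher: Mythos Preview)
Your proposal is correct. The paper's own argument differs only in its entry point: rather than invoking Proposition~\ref{prop:def}(5) to identify cluster tilting with the diagonal cotorsion pair $(\X,\X)$, the paper deduces the corollary from the immediately preceding proposition on functorially finite rigid subcategories together with the fact (cited from \cite{ZZ}) that cluster tilting subcategories are exactly the functorially finite maximal rigid ones, so that ``compact partial triangulation'' upgrades to ``compact triangulation''. Your route via $(\X,\X)$ and Theorem~\ref{maintheorem} is slightly more self-contained (it avoids the external citation), while the paper's route makes the corollary a one-line specialization of the result just proved. In either case the only substantive step left is the translation of $\conb\wedge\conbp\wedge\conc$ for a triangulation into the wording of (3), which is exactly the single point the paper's proof records.
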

\begin{proof}
The only fact we should point out is that (3) is equivalent to that $\XX$ is a triangulation satisfying $\conb$, $\conbp$ and $\conc$.

\end{proof}

\subsection{Classification of \texorpdfstring{$t$}{t}-structures in \texorpdfstring{$\CC$}{C}}

Recall that a $t$-structure is a cotorsion pair $(\X,\Y)$ such that $\X$ is closed under $[1]$ or equivalently $\Y$ is closed under $[-1]$.

For any integer $p$, denote by $L_{\leq p}$ (resp. $R_{\leq p}$) the set of upper arcs $[\ll_i,\ll_j]$ (resp. lower arcs $[\rr_i,\rr_j]$) in $\BB$ with $i,j\leq p$; denote by $L_{\geq p}$ (resp. $R_{\geq p}$) the set of upper arcs $[\ll_i,\ll_j]$ (resp. lower arcs $[\rr_i,\rr_j]$) in $\BB$ with $i,j\geq p$. For convenience, take $L_{\leq -\infty}$, $R_{\leq -\infty}$, $L_{\geq +\infty}$ and $R_{\geq +\infty}$ to be the empty set. We now give a classification of $t$-structures in $\CC$ as a application of our main result.
	
\begin{theorem}\label{t-structures}
Let $\X$ and $\Y$ be subcategories of $\CC$ and $\XX$ and $\YY$ the corresponding sets of arcs in $\BB$. Then $(\X,\Y)$ is a $t$-structure in $\CC$ if and only if
\begin{enumerate}
\item $\XX=L_{\geq p}\cup R_{\geq q}$ for $p,q$ integers or $-\infty$ and $\YY=\nc\XX$; or
\item $\YY=L_{\leq p}\cup R_{\leq q}$ for $p,q$ integers or $+\infty$ and $\XX=\nc\YY$.
\end{enumerate}
Moreover, in each case, the heart of $(\X,\Y)$ is the subcategory of $\CC$ corresponding to the set  $\{[\ll_{p-1},\ll_{p+1}],[\rr_{q-1},\rr_{q+1}]\}$, where $[\ll_{p-1},\ll_{p+1}],[\rr_{q-1},\rr_{q+1}]$ need to be omitted if $p,q$ are not integers, respectively.
\end{theorem}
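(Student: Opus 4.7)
The proof breaks into the two implications and the heart computation.

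For the ``if'' direction in case (1), take $\XX=L_{\geq p}\cup R_{\geq q}$. I will verify $\XX$ is $\tau$-closed (immediate from the action of $\tau$ on endpoints) and satisfies $\cona$, $\conbp$, and $\conc$: $\cona$ because any Ptolemy derivation inside $L_{\geq p}$ or $R_{\geq q}$ stays in the same component; $\conbp$ by a direct case analysis at each marked point; and $\conc$ because $[\ll_p,\rr_q]\in\nc\XX$ (with obvious modifications when $p$ or $q$ is $-\infty$, interpreted as that component being empty). By Theorem~\ref{maintheorem}, $(\X,\Y)$ with $\YY=\nc\XX$ is a cotorsion pair. Since every arc $u\in\XX$ crosses its $\tau$-translate $\tau u\in\XX$, we have $\XX\cap\nc\XX=\emptyset$, so the core is zero and by Proposition~\ref{prop:def}(3) we obtain a $t$-structure. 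Case (2) follows dually.

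For the converse, let $(\X,\Y)$ be a $t$-structure. Then $\XX$ is $\tau$-closed (since $\X$ is closed under $[1]$) and $\YY=\nc\XX$ is $\tau^{-1}$-closed; by Theorem~\ref{maintheorem}, both are Ptolemy diagrams. I first establish the dichotomy that at least one of $\XX,\YY$ contains no connecting arc: if $[\ll_{p_0},\rr_{q_0}]\in\XX$, iterated $\tau$-closure and $\cona$ force $\{[\ll_a,\rr_b]:a\geq p_0,\,b\geq q_0\}\subseteq\XX$, and for any connecting arc $[\ll_i,\rr_j]$ the choice $a=\max(i+1,p_0)$, $b=\max(j+1,q_0)$ produces an arc in $\XX$ crossing it, so $\YY$ has no connecting arcs. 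Assuming now $\XX$ has no connecting arcs (the other case being dual and yielding (2)), write $\XX=U\cup L$ for the upper and lower arcs in $\XX$. Let $p$ be the smallest index appearing as an endpoint of an arc in $U$ (when $U\neq\emptyset$) and pick $[\ll_p,\ll_j]\in U$. Applying $\cona$ to the crossing pair $[\ll_p,\ll_m],[\ll_{p+1},\ll_{m+1}]$ (both in $U$ by $\tau$-closure) yields $[\ll_p,\ll_{m+1}]\in U$ by induction for all $m\geq j$; and applying $\cona$ to $[\ll_p,\ll_m],[\ll_{m-1},\ll_{2m-p-1}]$ yields the shorter arc $[\ll_p,\ll_{m-1}]\in U$ for $m\geq p+3$. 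Iterating, $[\ll_p,\ll_m]\in U$ for all $m\geq p+2$; together with $\tau$-closure, $L_{\geq p}\subseteq U$, while the reverse inclusion is the choice of $p$. A possible degenerate sub-case in which $U$ has endpoints unbounded below would force $U$ to consist of all upper arcs, but this violates $\conbp$ at every upper marked point, contradicting $\tau^{-1}$-compactness of $\XX$; this degeneracy thus corresponds to $U=\emptyset$. The same argument applied to $L$ completes case (1).

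For the heart, the indecomposables correspond to arcs in $\tau^{-1}\XX\cap\tau\YY$. In case (1), $\tau^{-1}\XX=L_{\geq p-1}\cup R_{\geq q-1}$ and, since $\tau$ commutes with $\nc$, $\tau\YY=\nc(L_{\geq p+1}\cup R_{\geq q+1})$. An upper arc $[\ll_a,\ll_b]$ lies in the intersection iff $p-1\leq a<b\leq p+1$ and $b\geq a+2$, forcing $(a,b)=(p-1,p+1)$; the lower-arc case is parallel, yielding $[\rr_{q-1},\rr_{q+1}]$; and no connecting arc appears since $\tau^{-1}\XX$ has none. Case (2) is dual. The main obstacle is the ``shorter-arc'' Ptolemy step needed to pin down $U=L_{\geq p}$ from a single seed arc, together with the bookkeeping involved in ruling out the degenerate unbounded-endpoint case.
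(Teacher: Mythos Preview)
Your proof is correct, and the ``if'' direction coincides with the paper's argument (verify $\cona$, $\conbp$, $\conc$ for $\XX$ in case~(1), invoke Theorem~\ref{maintheorem}, then use Proposition~\ref{prop:def}(3) via $\XX\cap\YY=\emptyset$).

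For the converse you take a genuinely different route. The paper starts from $\conc$: since $\XX\cup\YY$ contains a connecting arc, it treats the case that $\XX$ contains one, and then, using $\tau$-closure and repeated applications of $\cona$, shows that every $\ll_i$ with $i\geq p$ (and every $\rr_j$ with $j\geq q$) is an endpoint of a connecting arc in $\XX$; this forces $\YY=\nc\XX=L_{\leq p}\cup R_{\leq q}$, i.e.\ case~(2), and case~(1) follows dually. You instead argue from the complementary side: after establishing that at least one of $\XX,\YY$ has no connecting arc, you analyze $\XX$ directly when it consists only of upper and lower arcs, using a Ptolemy lengthening/shortening argument from a single seed $[\ll_p,\ll_j]$ (together with $\tau$-closure) to obtain $U=L_{\geq p}$, and ruling out the unbounded-below degeneracy via $\conbp$. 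Your approach is more hands-on combinatorially but avoids computing $\nc\XX$; the paper's is shorter but leaves the iterative ``$\tau$ and $\cona$'' mechanism implicit. Your explicit heart computation is also a useful addition, since the paper's proof does not address that clause of the statement.
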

	
\begin{proof}
To prove the `only if' part, suppose that $(\X,\Y)$ is a $t$-structure in $\CC$. By Theorem~\ref{maintheorem}, $\XX=\nc\YY$, $\YY=\nc\XX$, $\XX$ satisfies $\cona$, $\conbp$ and $\conc$, and $\YY$ satisfies $\cona$, $\conb$ and $\conc$. By $\conc$, there is a connecting arc in $\XX\cup\YY$. Consider first the case that $\XX$ contains connecting arcs. Let $p$ (resp. $q$) be the minimal integer such that $\ll_p$ (resp. $\rr_q$) is an endpoint of some arc in $\XX$ if exists, or $p=-\infty$ ($q=-\infty$) otherwise. Since $\X$ is closed under $[1]$, by Proposition~\ref{lem1}, $\XX$ is closed under $\tau$. Using the action of $\tau$ and $\cona$ repeatedly, we have that any marked point $\ll_i$ (resp. $\rr_j$) with $i\geq p$ (resp. $j\geq q$) is an endpoint of some connecting arc in $\widetilde{\X}$. It follows that there are no connecting arcs in $\YY$. Therefore, by the minimality of $p$ and $q$ and by $\YY=\nc\XX$, we have  $\YY=L_{\leq p}\cup R_{\leq q}$. Thus, we show that if $\XX$ contains connecting arcs, then (2) holds. Similarly, we can prove that if $\YY$ contains connecting arcs, then (1) holds.

To show the `if' part, it is easy to see that $\XX$ in case (1) satisfies $\cona$, $\conbp$ and $\conc$ and that $\YY$ in case (2) satisfies $\cona$, $\conb$ and $\conc$. Hence by Theorem~\ref{maintheorem}, $(\X,\Y)$ is a cotorsion pair. Moreover, in both cases, $\XX\cap\YY=\emptyset$. Then by Proposition~\ref{prop:def}, $(\X,\Y)$ is a $t$-structure.

\end{proof}

We illustrate the two types of $t$-structures in Theorem~\ref{t-structures} in Figure~\ref{hh1} and Figure~\ref{hh2}, respectively. The diagram in Figure~\ref{hh1} corresponds to the left part of a $t$-structure and the diagram in Figure~\ref{hh2} corresponds to the right part of a $t$-structure. Note that when $p$ or $q$ is $+\infty$ or $-\infty$, some arcs in the figures will disappear.

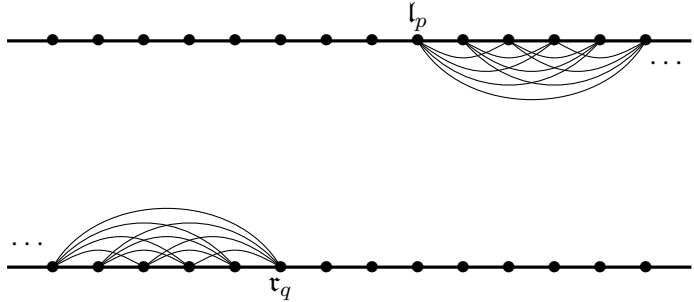
\begin{figure}[ht]\centering
	\begin{tikzpicture}[scale=.6]
	\draw[very thick] (-7,0)--(8,0);
	\draw[very thick] (-7,5)--(8,5);
	\foreach \x in {-6,-5,-4,...,7}
	\draw (\x,0)node{$\bullet$} (\x,5)node{$\bullet$};
	\draw (2,5)node[above]{$\ll_p$};
	\draw (-1,0)node[below]{$\rr_q$};
	\draw(2,5).. controls +(-30:1) and +(-150:1)..(4,5);
	\draw(2,5).. controls +(-40:1.4) and +(-140:1.4)..(5,5);
	\draw(2,5).. controls +(-50:1.7) and +(-130:1.7)..(6,5);
	\draw(2,5).. controls +(-60:2) and +(-120:2)..(7,5);
	\draw(3,5).. controls +(-30:1) and +(-150:1)..(5,5);
	\draw(3,5).. controls +(-40:1.4) and +(-140:1.4)..(6,5);
	\draw(3,5).. controls +(-50:1.7) and +(-130:1.7)..(7,5);
	\draw(4,5).. controls +(-30:1) and +(-150:1)..(6,5);
	\draw(4,5).. controls +(-40:1.4) and +(-140:1.4)..(7,5);
	\draw(5,5).. controls +(-30:1) and +(-150:1)..(7,5);
	\draw (7.5,4.5)node{$\cdots$};
	
	\draw(-1,0).. controls +(180-30:1) and +(180-150:1)..(-3,0);
	\draw(-1,0).. controls +(180-40:1.4) and +(180-140:1.4)..(-4,0);
	\draw(-1,0).. controls +(180-50:1.7) and +(180-130:1.7)..(-5,0);
	\draw(-1,0).. controls +(180-60:2) and +(180-120:2)..(-6,0);
	\draw(-2,0).. controls +(180-30:1) and +(180-150:1)..(-4,0);
	\draw(-2,0).. controls +(180-40:1.4) and +(180-140:1.4)..(-5,0);
	\draw(-2,0).. controls +(180-50:1.7) and +(180-130:1.7)..(-6,0);
	\draw(-3,0).. controls +(180-30:1) and +(180-150:1)..(-5,0);
	\draw(-3,0).. controls +(180-40:1.4) and +(180-140:1.4)..(-6,0);
	\draw(-4,0).. controls +(180-30:1) and +(180-150:1)..(-6,0);
	\draw(-6.5,.5)node{$\cdots$};
	\end{tikzpicture}
	\caption{The first type of $t$-structures}
	\label{hh1}
\end{figure}

\begin{figure}[ht]\centering
	\begin{tikzpicture}[scale=.6]
	\draw[very thick] (-7,0)--(8,0);
	\draw[very thick] (-7,5)--(8,5);
	\foreach \x in {-6,-5,-4,...,7}
	\draw (\x,0)node{$\bullet$} (\x,5)node{$\bullet$};
	\draw (2-3,5)node[above]{$\ll_p$};
	\draw (-1+3,0)node[below]{$\rr_q$};
	\draw(2-8,5).. controls +(-30:1) and +(-150:1)..(4-8,5);
	\draw(2-8,5).. controls +(-40:1.4) and +(-140:1.4)..(5-8,5);
	\draw(2-8,5).. controls +(-50:1.7) and +(-130:1.7)..(6-8,5);
	\draw(2-8,5).. controls +(-60:2) and +(-120:2)..(7-8,5);
	\draw(3-8,5).. controls +(-30:1) and +(-150:1)..(5-8,5);
	\draw(3-8,5).. controls +(-40:1.4) and +(-140:1.4)..(6-8,5);
	\draw(3-8,5).. controls +(-50:1.7) and +(-130:1.7)..(7-8,5);
	\draw(4-8,5).. controls +(-30:1) and +(-150:1)..(6-8,5);
	\draw(4-8,5).. controls +(-40:1.4) and +(-140:1.4)..(7-8,5);
	\draw(5-8,5).. controls +(-30:1) and +(-150:1)..(7-8,5);
	\draw (-6.5,4.5)node{$\cdots$};
	
	\draw(-1+8,0).. controls +(180-30:1) and +(180-150:1)..(-3+8,0);
	\draw(-1+8,0).. controls +(180-40:1.4) and +(180-140:1.4)..(-4+8,0);
	\draw(-1+8,0).. controls +(180-50:1.7) and +(180-130:1.7)..(-5+8,0);
	\draw(-1+8,0).. controls +(180-60:2) and +(180-120:2)..(-6+8,0);
	\draw(-2+8,0).. controls +(180-30:1) and +(180-150:1)..(-4+8,0);
	\draw(-2+8,0).. controls +(180-40:1.4) and +(180-140:1.4)..(-5+8,0);
	\draw(-2+8,0).. controls +(180-50:1.7) and +(180-130:1.7)..(-6+8,0);
	\draw(-3+8,0).. controls +(180-30:1) and +(180-150:1)..(-5+8,0);
	\draw(-3+8,0).. controls +(180-40:1.4) and +(180-140:1.4)..(-6+8,0);
	\draw(-4+8,0).. controls +(180-30:1) and +(180-150:1)..(-6+8,0);
	\draw(7.5,.5)node{$\cdots$};
	\end{tikzpicture}
	\caption{The second type of $t$-structures}
	\label{hh2}
\end{figure}
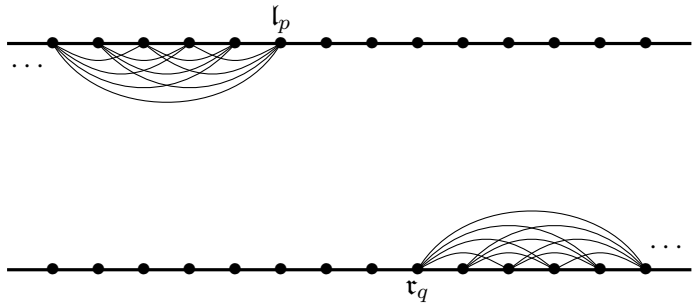

\begin{remark}
By Theorem \ref{t-structures}, each pair $(p,q)$ gives two $t$-structures in $\CC$. Hence there is a bijection from $\left(\mathbb{Z}\cup\{\infty\}\right)\times\left(\mathbb{Z}\cup\{\infty\}\right)\times\mathbb{Z}_2$ to the set of t-structures in $\CC$.


\end{remark}

An immediate corollary of Theorem~\ref{t-structures} is the following.

\begin{corollary}
The heart of any non-trivial $t$-structure in $\mathscr{C}$ is equivalent to the module category of the algebra $\k$ or $\k\oplus\k$.
\end{corollary}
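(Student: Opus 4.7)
The plan is to read off the heart of a non-trivial $t$-structure directly from Theorem~\ref{t-structures} and then to compute the relevant morphism spaces in $\CC$. By that theorem, the heart $\mathcal{H}$ is the additive closure of either one or two indecomposable objects of $\CC$: at most one coming from the upper arc $[\ll_{p-1},\ll_{p+1}]$ and at most one coming from the lower arc $[\rr_{q-1},\rr_{q+1}]$, with non-triviality forcing at least one to be present. The proof therefore splits into a one-indecomposable case and a two-indecomposable case.

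The first step is to show that, for each such indecomposable $M_u$, one has $\End_\CC(M_u)\cong\k$. I plan to apply Lemma~\ref{lem:fac} with $v_1=v_2=u$ (which is legal since $u\geq_\pp u$ at either endpoint $\pp$ of $u$); this forces every self-map of $M_u$ to factor through the identity, so $\End_\CC(M_u)=\k\cdot\mathrm{id}_{M_u}$. A standard argument then shows that a $\k$-linear, Hom-finite, Krull--Schmidt abelian category generated (as an additive category) by a single indecomposable $M$ with $\End(M)=\k$ is equivalent to $\mod\k$: morphisms $M^n\to M^m$ are identified with $m\times n$ matrices over $\k$, so kernels and cokernels inside the additive closure exist and coincide with the linear-algebraic ones. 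This settles the one-arc case.

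In the two-arc case, set $M_1=M_{[\ll_{p-1},\ll_{p+1}]}$ and $M_2=M_{[\rr_{q-1},\rr_{q+1}]}$. From Proposition~\ref{lem1}(2) one has $M_v[1]=M_{\tau v}$, whence $\Hom_\CC(M_u,M_w)\cong\Ext^1_\CC(M_u,M_{\tau^{-1}w})$; combining with Proposition~\ref{lem1}(1), this is nonzero exactly when $u$ crosses $\tau^{-1}w$. Since $\tau^{-1}$ preserves the upper/lower type of an arc, and an upper arc never crosses a lower arc, both $\Hom_\CC(M_1,M_2)$ and $\Hom_\CC(M_2,M_1)$ vanish. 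Together with $\End_\CC(M_i)\cong\k$, this gives $\End_\CC(M_1\oplus M_2)\cong\k\oplus\k$, and exhibits $\mathcal{H}$ as an orthogonal product of two copies of $\mod\k$, i.e.\ equivalent to $\mod(\k\oplus\k)$.

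The main obstacle, and the only non-trivial input beyond the crossing dictionary of Proposition~\ref{lem1}, is the identification $\End_\CC(M_u)=\k$; this rests on the fact that the Hom-space generator provided by Lemma~\ref{lem:fac} (ultimately by [LP, Lemma 5.6]) is precisely one-dimensional, not merely nonzero. Once this is in hand, the rest of the verification is a direct assembly of the ingredients above.
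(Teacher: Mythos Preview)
Your argument is correct and matches the paper's implicit reasoning: the paper gives no proof beyond declaring the statement an immediate corollary of Theorem~\ref{t-structures}, so what you have written is precisely the sort of unpacking the paper omits.

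One small point of phrasing deserves tightening. Invoking Lemma~\ref{lem:fac} with $v_1=v_2=u$ and then saying ``every self-map factors through the identity'' is tautological (any $f$ equals $\mathrm{id}\circ f$), so this sentence by itself does not yield $\End_\CC(M_u)=\k$. What actually does the work is exactly what you flag in your final paragraph: the proof of Lemma~\ref{lem:fac} (via the dual of \cite[Lemma~5.6]{LP}) shows that $\Hom_\CC(M_u,M_{v_2})$ is one-dimensional whenever it is nonzero, and in particular $\End_\CC(M_u)=\k$ for each of the arcs in question. Alternatively, one can argue that Lemma~\ref{lem:fac} applied with an \emph{arbitrary} nonzero $g\in\End_\CC(M_u)$ forces $\mathrm{id}$ to factor through $g$, so every nonzero endomorphism is a unit; since $\CC$ is Hom-finite $\k$-linear over an algebraically closed field, this again gives $\End_\CC(M_u)=\k$. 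Either way, your conclusion stands. The orthogonality computation between the upper and lower heart-generators via Proposition~\ref{lem1} is clean and correct.
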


\subsection{Relationship with the cluster category of type \texorpdfstring{$A_\infty$}{A infinity}}

In this subsection, we use our classification of cotorsion pairs in $\CC$ to recover the main result in \cite{Ng} which gives a classification of cotorsion pairs in the cluster category of type $A_\infty$.

\def\UU{\mathcal{U}}

Let $u=[\ll_p,\rr_q]$ an arbitrary connecting arc in $\BB$. Set
\[\nc M_u=^\bot(\add M_u[1]).\]
Let $\CC_u$ be the quotient category $\nc M_u/[\add M_u]$, whose objects are the same as the objects of $\nc M_u$ with morphisms given by the morphisms of $\nc M_u$ modulo those morphisms factoring through $\add M_u$. For any object $M$ of $\nc M_u$, denote by $\overline{M}$ the corresponding object of $\CC_u$. By \cite[Section~4]{IY}, $\CC_u$ is a 2-Calabi-Yau triangulated category and for any cluster tilting subcategory $\D$ of $\CC$ containing $M_u$, the subcategory of $\CC_u$ generated by the objects $\overline{M}$, $M\in\D$, is a cluster tilting subcategory of $\CC_u$.

By Proposition~\ref{lem1}, there is a bijection
\[\nc\{u\}\xrightarrow{1-1} \{\text{the (isoclasses of) indecomposable objects of $\nc M_u$}\}\]
sending $v$ to $M_v$. This induces a bijection
\begin{equation}\label{eq:bi}
\nc\{u\}\setminus\{u\}\xrightarrow{1-1} \{\text{the (isoclasses of) indecomposable objects of $\CC_u$}\}
\end{equation}
sending $v$ to $\overline{M_v}$. For any subcategory $\D$ of $\CC_u$, we use $\widetilde{\D}$ to denote the subset of $\nc\{u\}\setminus\{u\}$ consisting of $v$  with $\overline{M_v}\in\D$.

Let $\D_1$ and $\D_2$ be the subcategories of $\CC$ such that $\widetilde{\D_1}$ and $\widetilde{\D_2}$ consist of the arcs in $\nc\{u\}\setminus\{u\}$ left to $u$ and right to $u$, respectively. We have the following result.

\begin{theorem}\label{C}
Let $u$ be a connecting arc in $\BB$. Using the notation above, we have that $\D_1$ and $\D_2$ are triangulated subcategories of $\CC_u$ such that $\CC_u=\D_1\oplus\D_2$. Moreover, $\D_i$ are equivalent to the cluster category of type $A_\infty$.
\end{theorem}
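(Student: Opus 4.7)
The plan proceeds in three stages: an additive decomposition of $\CC_u$, its upgrade to a triangulated decomposition via $\Hom$-orthogonality, and identification of each summand with the cluster category of type $A_\infty$. For the first stage, observe that $u = [\ll_p, \rr_q]$ separates $\BB$ into two open regions; any arc not crossing $u$ has both endpoints on the closure of a single region, so $\nc\{u\} \setminus \{u\}$ partitions as $\widetilde{\D_1} \sqcup \widetilde{\D_2}$. Via the bijection (\ref{eq:bi}), every indecomposable of $\CC_u$ then lies in $\D_1$ or in $\D_2$.

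The crucial second stage is $\Hom_{\CC_u}(\D_1, \D_2) = 0 = \Hom_{\CC_u}(\D_2, \D_1)$. For $v \in \widetilde{\D_1}$ and $w \in \widetilde{\D_2}$, Lemma~\ref{lem:easy2} forces $\Hom_{\CC}(M_v, M_w) = 0$ unless $v$ and $w$ share an endpoint $\pp$, and the only marked points lying on both boundaries are $\ll_p$ and $\rr_q$. A direct inspection of the clockwise order at such a $\pp$, combined with $v \neq u \neq w$, shows that $u$ sits strictly between $v$ and $w$ in $<_\pp$, while a small case check verifies that $\tau v$ crosses $u$. Lemma~\ref{lem:fac} then factors every morphism $M_v \to M_w$ through any nonzero $M_u \to M_w$; the direction $M_w \to M_v$ is either handled symmetrically or is immediate from Lemma~\ref{lem:easy2}. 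So all relevant $\CC$-morphisms factor through $\add M_u$ and vanish in $\CC_u$. Once orthogonality is established, each $\D_i$ admits the intrinsic description $\D_i = \{M \in \CC_u : \Hom_{\CC_u}(M, \D_{3-i}) = 0 = \Hom_{\CC_u}(\D_{3-i}, M)\}$, which is manifestly closed under the shift of $\CC_u$; the long exact sequence for $\Hom_{\CC_u}(-, \D_{3-i})$ applied to any triangle in $\D_i$ forces the cone back into $\D_i$, so $\D_1$ and $\D_2$ are triangulated subcategories and $\CC_u = \D_1 \oplus \D_2$ as triangulated categories.

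For the third stage I will relabel the boundary marked points on the left of $u$, namely $\ldots, \ll_{p-1}, \ll_p, \rr_q, \rr_{q+1}, \ldots$, by $\mathbb{Z}$ (sending $\ll_{p-n} \mapsto -n$ and $\rr_{q+n} \mapsto n+1$ for $n \geq 0$). Under this relabeling, arcs on the left of $u$ other than $u$ itself match the arcs of the infinity-gon used in \cite{HJ2}, and the crossing relation is preserved. I plan to promote this combinatorial bijection to a triangulated equivalence $\D_1 \simeq$ (cluster category of type $A_\infty$) by checking that it respects Hom-spaces, extensions, and the shift; on both sides these data are encoded by the clockwise order at each vertex together with the crossing pattern, via Lemmas~\ref{lem:easy2} and \ref{lem:fac}. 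The main obstacle will be matching the shift functor, since in $\CC_u$ it is the Iyama--Yoshino shift defined through $\add M_u$-approximations in $\CC$, while on the $A_\infty$ side it is a combinatorial rotation of the infinity-gon. I expect to overcome this by identifying both shifts with the respective Auslander--Reiten translations on indecomposables and verifying that excising $u$ from $\BB$ transports the $\tau$-action on the left-of-$u$ arcs to the AR-translation of the infinity-gon, so that the arc bijection automatically intertwines them.
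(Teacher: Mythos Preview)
Your first two stages are correct and, for the orthogonality, actually more explicit than the paper's version: the paper simply invokes \cite[Lemma~4.8]{IY} to identify $\Ext^1_{\CC_u}(\overline{M_{v_1}},\overline{M_{v_2}})$ with $\Ext^1_{\CC}(M_{v_1},M_{v_2})$, which vanishes since arcs on opposite sides of $u$ do not cross, and deduces the decomposition from that. Your direct computation that every $\CC$-morphism between the two sides factors through $M_u$ works as well; just note that the vanishing of $\Hom_\CC(M_v,M_w)$ when $v,w$ share no endpoint really comes from Lemma~\ref{lem:4} (equivalently, from $\Hom(M_v,M_w)\neq 0\iff \tau v$ crosses $w$) rather than from Lemma~\ref{lem:easy2} alone.

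Where your route diverges substantially is the third stage. The paper does not build the equivalence by hand. Instead it exhibits an explicit compact triangulation $\widetilde{\T_1}\cup\widetilde{\T_2}\cup\{u\}$ of $\BB$ (a zig-zag fan through $u$), so that each $\T_i\subset\D_i$ is a cluster tilting subcategory whose quiver is acyclic of type $A_\infty$. Then \cite[Theorem~3.2]{SR} gives that $\mod\T_i$ is hereditary, and Keller--Reiten's recognition theorem for acyclic 2-Calabi-Yau categories \cite{KR2} identifies $\D_i$ with the cluster category of type $A_\infty$. This sidesteps entirely the obstacle you flag of matching the Iyama--Yoshino shift on $\CC_u$ with the rotation of the infinity-gon. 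Your plan to compare both shifts with the respective AR-translations is reasonable and should ultimately succeed, but it requires controlling how the quotient by $[\add M_u]$ affects Hom-spaces between left-of-$u$ arcs that touch $\ll_p$ or $\rr_q$ (such morphisms \emph{can} factor through $M_u$, so Hom-dimensions in $\CC_u$ are not simply inherited from $\CC$), and then separately checking that the shift is intertwined. The paper's approach purchases all of this for free via the structural theorems.
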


\begin{proof}
By the bijection \eqref{eq:bi}, any indecomposable object in $\CC_u$ is either in $\D_1$ or in $\D_2$. On the other hand, for any two arcs $v_i\in\D_i$, we have
\[
\Ext^1_{\CC_u}(\overline{M_{v_1}},\overline{M_{v_2}})\cong\Ext^1_{\CC}(\overline{M_{v_1}},\overline{M_{v_2}})=0
\]
where the first isomorphism follows from \cite[Lemma~4.8]{IY} and the second one from Proposition~\ref{lem1}. Hence the first assertion of the theorem follows.

To show the second assertion, let $\T_i$ be the subcategory of $\D_i$ such that
\[
\widetilde{\T_1}=\{[\ll_{p-i},\rr_{q+i}],\ [\ll_{p-i},\rr_{q+i-1}]\mid i>0 \},
\]
\[
\widetilde{\T_2}=\{[\ll_{p-i},\rr_{q+i}],\ [\ll_{p-i-1},\rr_{q+i}]\mid i<0 \}.
\]
Then the union $\widetilde{\T_1}\cup\widetilde{\T_2}\cup\{u\}$ is a compact triangulation of $\BB$ (cf. the following figure). It follows that $\T_i$ is a cluster tilting subcategory of $\D_i$.

\centerline{\begin{tikzpicture}[scale=.6]
	\draw[very thick] (-7,0)--(8,0);
	\draw[very thick] (-7,5)--(8,5);
	\foreach \x in {-6,-5,-4,...,7}
	\draw (\x,0)node{$\bullet$} (\x,5)node{$\bullet$};
	\draw (2,5)node[above]{$\ll_p$} (3,5)node[above]{$\ll_{p+1}$};
	\draw (-1,0)node[below]{$\rr_q$} (-2,0)node[below]{$\rr_{q+1}$};
	\foreach \x in {-1,0,1,...,5}
	\draw(\x,5).. controls +(-140:2) and +(80:1)..(\x-3,0);
	\foreach \x in {-1,0,1,...,5}
	\draw(\x-1,5).. controls +(-140:2) and +(80:1)..(\x-3,0);
	\draw (5,2.5)node{$\cdots$} (-5,2.5)node{$\cdots$};
	\end{tikzpicture}}

Since $\T_i$ is acyclic of type $A_{\infty}$, by \cite[Theorem~3.2]{SR}, $\mod\T_i$ is hereditary. By \cite{KR2}, it follows that $\D_i$ is equivalent to the cluster category of type $A_\infty$.
\end{proof}

\def\UU{\mathcal{U}}

Let us recall some notion from \cite{Ng}. A set $\{m,n\}$ of two integers with $|n-m|\geq 2$ is called an arc in $L_\infty$. Let $V$ be the set of arcs in $L_\infty$. Two arcs $\{m_1,n_1\}$ and $\{m_2,n_2\}$ are said to cross if either $m_1<m_2<n_1<n_2$ or $m_2<m_1<n_2<n_1$. A set of arcs $\UU$ is said to satisfy {\em condition (i)} if, for each pair of crossing arcs $\{m_1,n_1\}$ and $\{m_2,n_2\}$ in $\UU$, those of the pairs $\{m_1,m_2\}$, $\{m_1,n_2\}$, $\{n_1,m_2\}$ and $\{n_1,n_2\}$ which are arcs belong to $\UU$. A set of arcs $\UU$ is said to satisfy {\em condition (f)} provided that for any integer $m$, if there are infinitely many arcs in $\UU$ of the form $\{m,n\}$ with $n>m$ then there are infinitely many arcs in $\UU$ of the form $\{m,n\}$ with $n<m$.

Consider the bijection $\varphi$ from the set $\{\ll_{p-i},\ \rr_{q+i}\mid i\geq 0 \}$ to the set of integers, sending $\ll_{p-i}$ to $i+1$ and sending $\rr_{q+i}$ to $-i$. Then $\varphi$ induces a bijection from the set $\widetilde{D_1}$ to $V$, sending $v=[\pp,\qq]$ to $\varphi(v):=[\varphi(\pp),\varphi(\qq)]$. This bijection, together with Theorem~\ref{C}, gives a one-to-one correspondence between the set $V$ and the set of (isoclasses of) indecomposable objects in $\C_{A_\infty}$. Hence for any subcategory $\X$ of $\C_{A_\infty}$, there is a corresponding subset $\XX$ of $V$. Then we have the following corollary of Theorem~\ref{maintheorem}.


\begin{corollary}[Theorem 3.18 in \cite{Ng}]
Let $\X$ be a subcategory of $\C_{A_\infty}$ and let $\XX$ be the corresponding subset of $V$. Then $(\X,\X^\bot)$ is a torsion pair if and only if $\XX$ satisfies condition (i) and condition (f).
\end{corollary}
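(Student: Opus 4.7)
The plan is to deduce this corollary from Theorem~\ref{maintheorem} by lifting the torsion-pair question from $\C_{A_\infty}\simeq\D_1$ up to $\CC$. Fix the connecting arc $u=[\ll_p,\rr_q]$ used in Theorem~\ref{C}, so a subcategory $\X\subseteq\C_{A_\infty}$ with arc set $\XX\subseteq V$ determines via $\varphi^{-1}$ a subset $\varphi^{-1}(\XX)\subseteq\widetilde{\D_1}$. Set
\[\Omega\;:=\;\varphi^{-1}(\XX)\;\cup\;\{u\}\;\cup\;\widetilde{\D_2},\]
a set of arcs in $\BB$. Let $\X'$ be the subcategory of $\CC$ with $\widetilde{\X'}=\Omega$ and let $\Y'$ be the subcategory with $\widetilde{\Y'}=\nc\Omega$. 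The proof has two parts: (A) translating the conditions $\cona,\conbp,\conc$ on $\Omega$ (which by Theorem~\ref{maintheorem} characterize when $(\X',\Y')$ is a cotorsion pair in $\CC$) into Ng's conditions (i) and (f) on $\XX$; and (B) identifying $(\X',\Y')$ in $\CC$ with the torsion pair $(\X,\X^\bot)$ in $\C_{A_\infty}$.

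For part (A), first $\conc$ is immediate since $u\in\Omega$ is a connecting arc. For $\cona$, all crossings in $\Omega$ occur inside $\varphi^{-1}(\XX)$ or inside $\widetilde{\D_2}$ (because $\widetilde{\D_1}$ and $\widetilde{\D_2}$ lie on opposite sides of $u$ and $u$ crosses nothing in $\nc\{u\}$); Ptolemy on $\widetilde{\D_2}$ is automatic because all arcs on the right of $u$ are present, while Ptolemy on $\varphi^{-1}(\XX)$ matches condition (i) under $\varphi$, the edge case that a Ptolemy-connection arc equals $u$ (corresponding to the pair $\{0,1\}\notin V$) being absorbed by $u\in\Omega$. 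For $\conbp$, marked points outside the $\D_1$-region satisfy both halves vacuously; at $\ll_i$ with $i\le p$ or $\rr_j$ with $j\ge q$, the ``upper right'' and ``lower right'' boundedness are automatic (arcs in those directions either cross $u$ or lie in $\widetilde{\D_2}$), so $\conbp$ reduces to the implication ``lower left bounded $\Rightarrow$ upper left bounded'', whose contrapositive is exactly Ng's (f) at the corresponding integer in $\mathbb{Z}$.

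Part (B) is the main obstacle. Since $M_u$ lies in the core $\X'\cap\Y'$ by construction, the Iyama--Yoshino reduction identifies $(\X',\Y')$ with a cotorsion pair in $\CC_u=\D_1\oplus\D_2$ which, using Theorem~\ref{C} and the inclusion $\widetilde{\D_2}\subseteq\Omega$, decomposes as $(\X\oplus\D_2,\X^\bot[-1]\oplus 0)$. The $\D_1$-component $\X^\bot[-1]$ here arises from Proposition~\ref{lem1}(1): the arc set $\nc\Omega\cap\widetilde{\D_1}$ corresponds, via $\Ext^1(X,Y)=\Hom(X,Y[1])$, to the $\Ext^1$-right-orthogonal of $\X$, which equals $\X^\bot[-1]$. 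Applying Proposition~\ref{prop:def}(2) on the $\D_1$-summand then converts the cotorsion pair $(\X,\X^\bot[-1])$ into the torsion pair $(\X,\X^\bot)$ in $\D_1\simeq\C_{A_\infty}$, closing the loop.
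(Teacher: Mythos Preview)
Your argument is correct and follows the same overall strategy as the paper: translate Ng's conditions (i) and (f) into $\cona$, $\conbp$, $\conc$ for a suitable arc set in $\BB$, apply Theorem~\ref{maintheorem}, and then pass between $\CC$ and $\CC_u$ via the cotorsion-pair bijection. The only real difference is the choice of lift. The paper takes the minimal lift $\widetilde{\X'}=\varphi^{-1}(\XX)\cup\{u\}$, so that on the $\D_2$ side the reduced cotorsion pair is $(0,\D_2)$; you instead throw in all of $\widetilde{\D_2}$, obtaining $(\D_2,0)$ on that summand. Both choices work, but the paper's lift makes the verification of $\conbp$ at marked points on the $\D_2$ side genuinely vacuous (no arcs of $\widetilde{\X'}$ touch those points at all), whereas your lift forces you to argue that those points are simultaneously unbounded upper/lower right, which is what you sketch---your phrase ``vacuously'' there is a slight overstatement, but the check goes through. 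One bibliographic point: what you call ``Iyama--Yoshino reduction'' for the bijection of cotorsion pairs is precisely \cite[Theorem~3.5]{ZZ3}, which is what the paper cites; IY provides the triangulated structure on $\CC_u$, while the correspondence of cotorsion pairs with prescribed core is the content of \cite{ZZ3}.
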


\begin{proof}
It is straightforward to see that $\XX$ satisfies condition (i) if and only if $\varphi^{-1}(\XX)\cup\{u\}$ is a Ptolemy diagram of $\BB$. It is also easy to see that $\XX$ satisfies condition (f) if and only if $\varphi^{-1}(\XX)\cup\{u\}$ satisfies condition $\conbp$. Since $\conc$ always holds for $\varphi^{-1}(\XX)\cup\{u\}$, by Theorem~\ref{maintheorem}, we have that $\XX$ satisfies condition (i) and condition (f) if and only if $(\X',\X'[-1]^\bot)$ is a cotorsion pair in $\CC$, where $\X'$ is the subcategory of $\CC$ whose indecomposable object corresponds to an arc in $\varphi^{-1}(\XX)\cup\{u\}$. On the other hand, by \cite[Theorem 3.5]{ZZ3}, $(\X',\X'[-1]^\bot)$ is a cotorsion pair in $\CC$ if and only if $(\X,\X[-1]^\bot)$ is a cotorsion pair in $\CC_u$. Hence we are done.
\end{proof}

\begin{remark}
The subcategory of $\CC$, whose corresponding set of arcs in $\BB$ is the set of all upper arcs, is equivalent to the cluster category $\C_{A_\infty}$ of type $A_\infty$. There is clearly a canonical bijection between the set of upper arcs in $\BB$ and the set $V$. However, this bijection does not gives a one-to-one correspondence between the $\tau^{-1}$-compact Ptolemy diagrams of $\BB$ which only contains upper arcs and the subsets of $V$ satisfying condition (i) and condition (f). Hence one can not deduce Ng's classification of torsion pairs in $\C_{A_\infty}$ in this way.
\end{remark}

\end{document}